\def\Xint#1{\mathchoice
{\XXint\displaystyle\textstyle{#1}}%
{\XXint\textstyle\scriptstyle{#1}}%
{\XXint\scriptstyle\scriptscriptstyle{#1}}%
{\XXint\scriptscriptstyle\scriptscriptstyle{#1}}%
\!\int}
\def\XXint#1#2#3{{\setbox0=\hbox{$#1{#2#3}{\int}$ }
\vcenter{\hbox{$#2#3$ }}\kern-.6\wd0}}
\def\dashint{\Xint-}
\newtheorem{theorem}{Theorem}[section]
\newtheorem{lemma}[theorem]{Lemma}
\newtheorem{corollary}[theorem]{Corollary}
\newtheorem{proposition}[theorem]{Proposition}
\newtheorem{remark}[theorem]{Remark}
\theoremstyle{definition}
\newtheorem{definition}[theorem]{Definition}
\numberwithin{equation}{section}
\newcommand{ \mr }{ \mathbb{R} }
\newcommand{\iintss}{{\int\hspace{-0.28cm}\int}}
\newcommand{\miints}{{\iintss\hspace{-0.56cm} -\hspace{-0.15cm}-}}
\newcommand{\miint}[1]{{\miints_{\hspace{-0.13cm}#1}}}
\begin{document}
\title[Parabolic Lipschitz truncation for multi-phase problems]{Parabolic Lipschitz truncation for multi-phase problems: the degenerate case}

\author{Bogi Kim}\address{Department of Mathematics, Kyungpook National University, Daegu, 41566, Republic of Korea} \email{rlaqhrl4@knu.ac.kr}
\author{Jehan Oh}\address{Department of Mathematics, Kyungpook National University, Daegu, 41566, Republic of Korea} \email{jehan.oh@knu.ac.kr}
\author{Abhrojyoti Sen}\address{Goethe-Universit\"{a}t Frankfurt, Institut f\"{u}r Mathematik, Robert-Mayer-Str. 10, D-60629 Frankfurt, Germany} \email{sen@math.uni-frankfurt.de}

\subjclass{Primary 35B65 ; Secondary 35K65, 35K55, 35D30, 35A01 }
\date{\today.}
\keywords{Lipschitz truncation, parabolic multi-phase problems, parabolic systems, degenerate case, energy estimates}
\thanks{Bogi Kim is supported by the National Research Foundation of Korea (NRF) grant funded by the Korea government [Grant No. RS-2023-00217116]. Jehan Oh is supported by the National Research Foundation of Korea (NRF) grant funded by the Korea government {[Grant No. RS-2025-00555316]}. Abhrojyoti Sen is supported by research grants from the Alexander von Humboldt Foundation for postdocs.}

\begin{abstract}
This article is devoted to exploring the Lipschitz truncation method for parabolic multi-phase problems. The method is based on Whitney decomposition and covering lemmas with a delicate comparison scheme of appropriate alternatives to distinguish phases, as introduced by the first and second authors in \cite{Kim_Oh_2024}.
\end{abstract}
\maketitle

\section{\bf Introduction}
In this paper, we prove energy estimates for weak solutions to parabolic multi-phase problems of type
\begin{align*}
    &u_t-\operatorname{div}(|\nabla u|^{p-2}\nabla u+a(z)|\nabla u|^{q-2}\nabla u + b(z)|\nabla u|^{s-2}\nabla u)\\
    &\qquad\qquad =-\operatorname{div}(|F|^{p-2}F+a(z)|F|^{q-2}F + b(z)|F|^{s-2}F) \quad \text{in }\Omega_T,
\end{align*}
where $\Omega_T:=\Omega\times (0,T)$ represents a space-time cylinder with a bounded open set $\Omega \subset \mr^n$ for $n\geq 2$, $2\leq p <q <s <\infty$ and the modulating coefficients $a(\cdot)$ and $b(\cdot)$ are nonnegative and H\"{o}lder continuous.

Energy estimates are very important for proving the existence and regularity results for partial differential equations. We prove the energy estimates using the Lipschitz truncation method for parabolic multi-phase problems. Here, the Lipschitz truncation is a method of redefining a given function so that it keeps its values on a specific `good set', while redefining it in `bad sets' using a partition of unity related to a Whitney covering argument. Acerbi-Fusco \cite{Acerbi1984,Acerbi1988} have introduced the Lipschitz truncation for elliptic problems. Furthermore, Kinnunen-Lewis \cite{Kinnunen2002} and Kim-Kinnunen-S\"{a}rki\"{o} \cite{Wontae2023a} have studied related methods for the parabolic $p$-Laplace system and for the parabolic double phase systems, respectively.

In this paper, we deal with a parabolic multi-phase system
\begin{equation}    \label{eq: main equation}
    u_t -\operatorname*{div}\mathcal{A}(z,u,\nabla u)=-\operatorname*{div}\mathcal{B}(z,F) \quad \text{in $\Omega_T$,}
\end{equation}
 where Carath\'{e}odory vector fields $\mathcal{A}:\Omega_T\times \mr^N \times\mr^{Nn}\rightarrow \mr^{Nn}$ and $\mathcal{B}:\Omega_T\times \mr^{Nn}\rightarrow \mr^{Nn}$ satisfy the following growth conditions: for any $z \in \Omega_T, \; v\in \mr^N$ and $\xi\in \mr^{Nn}$, there exist two constants $0<\nu\leq L<\infty$ such that 
\begin{equation}    \label{eq: growth condition of A}
    \nu H(z,|\xi|)\leq \mathcal{A}(z,v,\xi)\cdot \xi, \quad |\mathcal{A}(z,v,\xi)||\xi|\leq LH(z,|\xi|)
\end{equation}
and
\begin{equation}    \label{eq: growth condition of B}
    |\mathcal{B}(z,\xi)||\xi|\leq LH(z,|\xi|),
\end{equation}
where the function $H:\Omega_T\times  \mr^+\rightarrow \mr^+$ is defined by 
$$
H(z,\kappa)=\kappa^p+a(z)\kappa^q+b(z)\kappa^s
$$
for $z\in\Omega_T$ and $\kappa\in\mr^+$. Furthermore, the source term $F:\Omega_T\rightarrow \mr^{Nn}$ satisfies 
\begin{equation}    \label{eq: assumption of F}
    \iint_{\Omega_T} H(z,|F|)\,dz < +\infty.
\end{equation}
We assume that the modulating coefficients $a:\Omega_T\rightarrow\mr^+$ and $b:\Omega_T\rightarrow\mr^+$ satisfy
\begin{equation}\label{eq : condition of n,p,q,alpha}
    q\leq p+\frac{2\alpha}{n+2},\quad 0\leq a\in C^{\alpha ,\frac{\alpha}{2}}(\Omega_T)\quad \text{for some }\alpha\in(0,1],
\end{equation}
and
\begin{equation}\label{eq : condition of n,p,s,beta}
    s\leq p+\frac{2\beta}{n+2},\quad 0\leq b\in C^{\beta,\frac{\beta}{2}}(\Omega_T)\quad \text{for some }\beta\in (0,1].
\end{equation}
Here, $a\in C^{\alpha,\frac{\alpha}{2}}(\Omega_T)$ means that $a\in L^\infty(\Omega_T)$ and there exists a H\"{o}lder constant $[a]_\alpha:=[a]_{\alpha,\frac{\alpha}{2};\Omega_T}>0$ such that 
$$
|a(x_1,t_1)-a(x_2,t_2)|\leq [a]_\alpha \left(|x_1-x_2|+\sqrt{|t_1-t_2|}\right)^\alpha
$$
for all $x_1,x_2\in \Omega$ and $t_1,t_2\in (0,T)$.
\begin{definition}\label{weak solution}
    A function $u : \Omega\times (0, T) \to \mathbb{R}^N$ satisfying 
    \begin{align*}
        u \in C(0,T; L^2(\Omega,\mr^N))\cap L^1(0, T; W^{1, 1}(\Omega,\mr^N))
    \end{align*}
    and
    $$
    \iint_{\Omega_T} [H(z,|u|)+H(z,|\nabla u|)]\, dz <\infty
    $$
    is a weak solution to \eqref{eq: main equation} if
    $$
    \iint_{\Omega_T} [-u\cdot \varphi_t +\mathcal{A}(z,u,\nabla u)\cdot \nabla\varphi]\, dz =\iint_{\Omega_T} [\mathcal{B}(z,F)\cdot \nabla\varphi]\, dz
    $$
    for every $\varphi \in C^\infty_0(\Omega_T,\mr^N)$.
\end{definition}

The parabolic multi-phase problems derive from elliptic double phase problems. The elliptic double phase problems of type
$$
-\operatorname{div}(|\nabla u|^{p-2}\nabla u +a(x)|\nabla u|^{q-2}\nabla u)=-\operatorname{div}(|F|^{p-2}F + a(x)|F|^{q-2}F)
$$
was first introduced in \cite{Zhikov1986,Zhikov1993,Zhikov1995,Zhikov1997}. These problems originate from the Lavrentiev phenomenon and the homogenization of strongly anisotropic materials. According to \cite{Colombo2015a,Fonseca2004}, the conditions 
\begin{equation}    \label{eq : condition 1 of p,q,alpha in elliptic Double phase}
    a(\cdot)\in C^\alpha(\Omega), \quad \alpha\in(0,1]\quad \text{and}\quad \frac{q}{p}\leq 1 +\frac{\alpha}{n}
\end{equation}
and
\begin{equation}    \label{eq : condition 2 of p,q,alpha in elliptic Double phase}
    u\in L^\infty (\Omega),\quad a(\cdot)\in C^\alpha (\Omega),\quad  \alpha\in(0,1] \quad \text{and}\quad q\leq p+\alpha
\end{equation}
are sharp for obtaining regularity results of weak solutions. In fact, when \eqref{eq : condition 1 of p,q,alpha in elliptic Double phase} or \eqref{eq : condition 2 of p,q,alpha in elliptic Double phase} holds, the gradient of a weak solution $u$ is H\"{o}lder continuous, see \cite{Baroni2018,Colombo2015,Esposito2004, Colombo2015a}. Moreover, the Calder\'{o}n-Zygmund estimates have been discussed in \cite{Baasandorj2020,DeFilippis2019,Colombo2016}. Also, Baroni-Colombo-Mingione \cite{Baroni2015} have investigated the Harnack's inequality. In addition, other regularity results for elliptic double phase problems have been discussed in \cite{Ok2017,Ok2020,Byun2017,Byun2021,Byun2021a,Byun2020,Haestoe2022,Haestoe2022a,Oh2024a}. The regularity results for elliptic multi-phase problems given by 
$$
\begin{aligned}
    &-\operatorname{div}\left(|\nabla u|^{p-2}\nabla u +\sum_{i=1}^m a_i(x)|\nabla u|^{p_i-2}\nabla u\right)\\
    &\qquad\qquad =-\operatorname{div}\left(|F|^{p-2}F+\sum_{i=1}^m a_i(x)|F|^{p_i-2}F\right)\,\,\, \ \text{in }\, \Omega,
\end{aligned}
\qquad 
$$
with $1<p<p_1\leq \cdots\leq p_m$ and $0\leq a_i(\cdot)\in C^{0,\alpha_i}(\bar{\Omega})$, $\alpha_i\in (0,1]$, have also been discussed in \cite{Baasandorj2021,DeFilippis2022,DeFilippis2019a,Fang2022}.

On the other hand, the regularity for parabolic double phase problems
$$
u_t-\operatorname{div}(|\nabla u|^{p-2} \nabla u + a(z)|\nabla u|^{q-2}\nabla u)=-\operatorname{div}(|F|^{p-2}F +a(z)|F|^{q-2}F)\quad \text{in }\Omega_T
$$
with {$2\leq p<q$} has been studied recently. If \eqref{eq : condition of n,p,q,alpha} holds, the existence of weak solutions has been discussed in \cite{Chlebicks2019}, see also \cite{Wontae2023a,Singer2016}. The gradient higher integrability and the Calder\'{o}n-Zygmund type estimate have been studied in \cite{Wontae2023b,2023_Gradient_Higher_Integrability_for_Degenerate_Parabolic_Double-Phase_Systems}. Moreover, the gradient higher integrability results for degenerate and singular parabolic multi-phase problems have also been studied in \cite{Kim_Oh_2024} and \cite{sen2024}, respectively.

In this paper, our goal is to prove the energy estimates of the weak solution to \eqref{eq: main equation}. For this, we denote parabolic cylinders $U_{r,\tau}(z_0)$ and $Q_r(z_0)$ by
\begin{equation}    \label{eq : definition of U and Q}
U_{r,\tau}(z_0):= B_r(x_0)\times \ell_\tau (t_0) \quad \text{and}\quad Q_r(z_0):=B_r(x_0)\times I_r(t_0)
\end{equation}
with 
\begin{equation}\label{eq : definition of ell and I_r}
    \ell_\tau(t_0):=(t_0-\tau,t_0+\tau)\quad \text{and}\quad I_r(t_0):=(t_0-r^2,t_0+r^2).
\end{equation}
Our main theorem of this paper is as follows.
\begin{theorem} \label{thm : the Caccioppoli inequality}
    Let $u$ be a weak solution to \eqref{eq: main equation}. Then, for $U_{R_2,S_2}(z_0)\subset \Omega_T$, $R_1\in [R_2/2,R_2)$ and $S_1\in[S_2/2^2,S_2)$, there exists a constant $c$ depending on $n,p,q,s,\nu$ and $L$ such that the following inequality holds: 
    \begin{align*}
        &\sup_{t\in(t_0-S_1,t_0+S_1)}\dashint_{B_{R_1}(x_0)} \frac{|u-(u)_{U_{R_1,S_1}(z_0)}|^2}{S_1}\; dx +\miint{U_{R_1,S_1}(z_0)}H(z,|\nabla u|)\;dz\\
        &\qquad \leq c\miint{U_{R_2,S_2}(z_0)}   H\left(z,\frac{|u-(u)_{U_{R_2,S_2}(z_0)}|}{R_2-R_1}\right)\; dz\\
        &\quad\qquad + c\miint{U_{R_2,S_2}(z_0)}\frac{|u-(u)_{U_{R_2,S_2}(z_0)}|^2}{S_2-S_1}\;dz +c\miint{U_{R_2,S_2}(z_0)} H(z,|F|) \;dz.
    \end{align*}
\end{theorem}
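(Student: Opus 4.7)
The plan is to prove this Caccioppoli-type inequality by the classical test-function argument, adapted to the multi-phase growth $H(z,\kappa)=\kappa^p+a(z)\kappa^q+b(z)\kappa^s$. I would choose a spatial cutoff $\eta\in C_0^\infty(B_{R_2}(x_0))$ with $\eta\equiv 1$ on $B_{R_1}(x_0)$ and $|\nabla\eta|\le c/(R_2-R_1)$, and a temporal cutoff $\zeta\in C_0^\infty(\ell_{S_2}(t_0))$ with $\zeta\equiv 1$ on $\ell_{S_1}(t_0)$ and $|\zeta'|\le c/(S_2-S_1)$. Writing $\bar{u}:=(u)_{U_{R_2,S_2}(z_0)}$, the idea is to test \eqref{eq: main equation} against $\varphi=\eta^2\zeta(u-\bar{u})$. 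Since this $\varphi$ is not $C_0^\infty$ as required in Definition \ref{weak solution}, I would first rewrite the equation using Steklov averages $u_h$, test with $\eta^2\zeta(u_h-\bar{u})$, and pass $h\to 0$ at the end; this is routine but necessary.

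For the parabolic term one uses the identity
\[
u\cdot\partial_t\bigl(\eta^2\zeta(u-\bar{u})\bigr)=\tfrac12\partial_t\bigl(\eta^2\zeta|u-\bar{u}|^2\bigr)-\tfrac12\eta^2\zeta'|u-\bar{u}|^2.
\]
The total derivative integrates to zero against compactly supported $\zeta$, giving a contribution controlled by $c\iint_{U_{R_2,S_2}(z_0)}|u-\bar{u}|^2/(S_2-S_1)\,dz$ on the right. To produce the supremum on the left, I would replace $\zeta$ by $\zeta(t)\chi_{(t_0-S_2,\tau)}(t)$ (smoothly approximated) for a free parameter $\tau\in(t_0-S_1,t_0+S_1)$, pass to the limit, and take the supremum in $\tau$. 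Because $\eta\equiv 1$ on $B_{R_1}$, this yields the $\sup_\tau\dashint_{B_{R_1}}|u-\bar{u}|^2/S_1\,dx$ term; the switch from $\bar{u}$ to $(u)_{U_{R_1,S_1}(z_0)}$ on the left is handled by the triangle inequality, using $R_1\ge R_2/2$ and $S_1\ge S_2/4$ so that averages over the two cylinders are comparable.

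For the elliptic part I would expand $\nabla\varphi=\eta^2\zeta\,\nabla u+2\eta\zeta\,(u-\bar{u})\otimes\nabla\eta$. The coercivity in \eqref{eq: growth condition of A} gives a lower bound $\nu\eta^2\zeta H(z,|\nabla u|)$, while the upper bounds in \eqref{eq: growth condition of A} and \eqref{eq: growth condition of B} dominate the cross terms by sums of $\eta\zeta|\nabla u|^{r-1}|u-\bar{u}||\nabla\eta|$ with $r\in\{p,q,s\}$, weighted by $1$, $a(z)$, $b(z)$ respectively. The crucial step is a phase-by-phase Young inequality
\[
|\nabla u|^{r-1}\frac{|u-\bar{u}|}{R_2-R_1}\le \varepsilon|\nabla u|^{r}+c_\varepsilon\Bigl(\frac{|u-\bar{u}|}{R_2-R_1}\Bigr)^{r}
\]
for each $r\in\{p,q,s\}$ separately; multiplying the $q$- and $s$-versions by $a(z)$ and $b(z)$ and summing reproduces $\varepsilon H(z,|\nabla u|)$ on the left (to be absorbed) and $c_\varepsilon H(z,|u-\bar{u}|/(R_2-R_1))$ on the right. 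The source $\mathcal{B}(z,F)$ is handled identically to generate the $H(z,|F|)$ term via \eqref{eq: growth condition of B}.

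I expect the main obstacle to be bookkeeping rather than any new analytical idea. First, the Young splitting must respect the \emph{pointwise} coefficients $a(z),b(z)$: these must stay attached to the same $z$ throughout the absorption so that the three phase contributions recombine into $H(z,\cdot)$ on the right, which is the reason no Hölder regularity of $a,b$ is needed at this stage (the hypotheses \eqref{eq : condition of n,p,q,alpha}-\eqref{eq : condition of n,p,s,beta} enter only later, for higher integrability). Second, the Steklov-average passage must commute with the time supremum and with the mean-value transformations, which is standard via lower semicontinuity on the left and dominated convergence on the right. Once $\varepsilon$ is fixed small enough to absorb $\varepsilon H(z,|\nabla u|)$ into the coercive left-hand term and both sides are divided by $|U_{R_1,S_1}(z_0)|$ (using that this volume is comparable to $|U_{R_2,S_2}(z_0)|$ by the size constraints on $R_1,S_1$), the stated averaged inequality follows.
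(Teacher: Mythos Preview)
Your outline is the classical Caccioppoli argument, and it \emph{would} work if you knew $|\nabla u|\in L^s(\Omega_T)$. The gap is precisely that under Definition~\ref{weak solution} you do not: the hypothesis is only $\iint H(z,|\nabla u|)\,dz<\infty$, i.e.\ $b(z)|\nabla u|^s\in L^1$, which gives no control on $|\nabla u|^s$ where $b$ degenerates. Consequently your test function $\eta^2\zeta(u_h-\bar u)$ is not a priori admissible. The weak formulation is stated for $\varphi\in C_0^\infty$, and to extend it to $\varphi$ built from $u$ you must show that $[\mathcal A]_h\cdot\nabla u_h\in L^1$. Writing this out, the $s$-phase contributes terms of the form $b(x,\tau)|\nabla u(x,\tau)|^{s-1}|\nabla u(x,\sigma)|$ with $\tau\neq\sigma$; Young's inequality produces $b(x,\tau)|\nabla u(x,\sigma)|^s$, and comparing $b(x,\tau)$ to $b(x,\sigma)$ via H\"older continuity leaves an uncontrolled remainder $[b]_\beta h^{\beta/2}|\nabla u(x,\sigma)|^s$. (So your claim that ``no H\"older regularity of $a,b$ is needed at this stage'' is a symptom of the problem, not a feature.) The same obstruction blocks any density/mollification argument: approximating $u$ by smooth functions in the natural energy space $W^{1,H}$ is exactly the Lavrentiev issue that the conditions \eqref{eq : condition of n,p,q,alpha}--\eqref{eq : condition of n,p,s,beta} are designed to address, and it is not ``routine''.

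This is the whole point of the paper; see Remark~1.3. The proof in Section~\ref{sec : proof of main theorem} tests instead with the Lipschitz truncation $v_h^\Lambda$ built in Sections~\ref{sec : Whitney decomposition and covering lemmas}--\ref{sec : Construction of test function via Lipschitz truncation}: by Corollary~\ref{COR4.11} one has $\nabla v_h^\Lambda\in L^\infty$, so every pairing is trivially integrable. The price is that one must control the discrepancy between $v_h^\Lambda$ and $v_h$ on the bad set $E(\Lambda)^c$ and pass $\Lambda\to\infty$; this is where the phase-splitting Whitney decomposition, the intrinsic cylinders, and the estimates of Lemma~\ref{LEMMA4.9} are essential. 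Your Young-inequality splitting and the absorption step survive verbatim in the paper's argument (estimate of $\mathrm{II}_1$), but only \emph{after} the truncation has made the test function legal.
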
  
\begin{remark}
In \cite{Kim_Oh_2024}, the energy estimate presented in Lemma 3.1 is derived under the assumption that $|\nabla u| \in L^s(\Omega_T).$ Consequently, the gradient higher integrability result \cite[Theorem 1.2]{Kim_Oh_2024} also relies on this assumption. However, in light of the energy estimate above, Theorem 1.2 in \cite{Kim_Oh_2024} can be established under the weaker assumption specified in Definition \ref{weak solution}.
\end{remark}
\begin{remark}
We would like to mention that the existence and uniqueness results for \eqref{eq: main equation} with Dirichlet boundary condition can be obtained by closely following the proofs of Theorem 2.6 and Theorem 2.7 in \cite{Wontae2023a}. Therefore, we choose not to pursue such aspects in this paper.
\end{remark}
To prove Theorem \ref{thm : the Caccioppoli inequality}, we construct the Whitney decomposition by dividing into $p$-, $(p,q)$-, $(p,s)$- and $(p,q,s)$-phases in Section \ref{sec : Whitney decomposition and covering lemmas}. Furthermore, in this section, we prove the Vitali covering argument by dividing it into a total of $16$ cases as in \cite{Kim_Oh_2024} and establish the related properties, which we summarize in Lemma \ref{LEM2.9}. In Section \ref{sec : Construction of test function via Lipschitz truncation}, we define the Lipschitz truncation and establish the related properties. We then prove the energy estimates in Section \ref{sec : proof of main theorem}.

\section{\bf Whitney decomposition and covering lemmas} \label{sec : Whitney decomposition and covering lemmas}
In this section, we construct a family of Whitney decomposition and show that the family covers the bad set $E(\Lambda)^c$ via a Vitali covering argument. The construction of such a decomposition is heavily used in the subsequent sections.
\subsection{\bf Auxiliary definitions}
Let $z_0=(x_0,t_0)\in \mr^{n+1}$ and $\varrho>0$. Parabolic cylinders with quadratic scaling in time are denoted as
$$
Q_\varrho (z_0)=B_\varrho \times I_\varrho(t_0),
$$
where $B_\varrho=B_\varrho(x_0)=\{y\in\mr^n : |x_0-y|<\varrho\}$ and $I_\varrho(t_0)=(t_0-\varrho^2,t_0+\varrho^2)$.

Let us define the strong maximal function  for $f\in L^1_{loc}(\mathbb{R}^{n+1})$ as 
\begin{align}\label{max fn}
    M f(z)=\sup_{z\in Q}\miint{Q}|f|\, dw,
\end{align}
where the supremum is taken over cubes $Q \subset \mathbb{R}^{n+1}$. { As in \cite{Wontae2023a}, using the Hardy-Littlewood-Wiener maximal function theorem with respect to space and time, we obtain the following lemma.}
\begin{lemma}\label{lem : property of maximal function}
    For $1<\sigma<\infty$ and $f\in L^\sigma (\mr^{n+1})$, there exists a constant $c=c(n,\sigma)$ such that
    $$
    \iint_{\mr^{n+1}} |Mf|^\sigma \, dz\leq c \iint_{\mr^{n+1}}|f|^\sigma\, dz.
    $$
\end{lemma}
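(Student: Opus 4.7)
The plan is to reduce the strong $(\sigma,\sigma)$ bound for the $(n{+}1)$-dimensional operator $M$ to the classical Hardy-Littlewood-Wiener theorem by iterating partial maximal operators in the spatial and temporal variables separately, which is exactly the strategy hinted at in the sentence preceding the lemma.

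First, I would exploit the product structure of cubes in $\mr^{n+1}$. Any cube $Q\subset\mr^{n+1}$ containing $z=(x,t)$ can be written as $Q=R\times J$, where $R$ is a cube in $\mr^n$ containing $x$ and $J\subset\mr$ is an interval containing $t$. Fubini's theorem then rewrites the average of $|f|$ on $Q$ as an iterated average,
$$
\miint{Q}|f|\,dw=\frac{1}{|J|}\int_J\left(\frac{1}{|R|}\int_R|f(y,s)|\,dy\right)ds\le \frac{1}{|J|}\int_J M_xf(x,s)\,ds\le M_t(M_xf)(z),
$$
where $M_x$ denotes the (non-centered) Hardy-Littlewood maximal operator over cubes in $\mr^n$ acting in the spatial variable (with $s$ frozen), and $M_t$ its one-dimensional analogue acting in time. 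Taking the supremum over cubes $Q\ni z$ yields the pointwise dominance $Mf(z)\le M_t(M_xf)(z)$.

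Second, I would apply the classical strong-type $(\sigma,\sigma)$ bound in each coordinate direction successively. Fubini together with the one-dimensional Hardy-Littlewood-Wiener theorem in $t$ (for each fixed $x$) gives
$$
\iint_{\mr^{n+1}}|M_t(M_xf)|^\sigma\,dz\le c(\sigma)\iint_{\mr^{n+1}}|M_xf|^\sigma\,dz,
$$
and another application of Fubini followed by the $n$-dimensional Hardy-Littlewood-Wiener theorem in $x$ (for each fixed $t$) yields
$$
\iint_{\mr^{n+1}}|M_xf|^\sigma\,dz\le c(n,\sigma)\iint_{\mr^{n+1}}|f|^\sigma\,dz.
$$
Combining these two inequalities with the pointwise bound from the first step produces the desired estimate with a constant $c=c(n,\sigma)$.

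I do not expect any genuine obstacle in this argument: the only substantive ingredient beyond Fubini's theorem is the product structure of the cubes $Q=R\times J$, which is exactly what permits the reduction to a composition of one-variable-type maximal operators, and each such operator is handled by the classical Hardy-Littlewood-Wiener theorem as a black box.
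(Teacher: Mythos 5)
Your proposal is correct and follows exactly the approach the paper indicates: the paper gives no detailed proof, just the remark that the lemma follows ``using the Hardy--Littlewood--Wiener maximal function theorem with respect to space and time,'' which is precisely your iteration $Mf\le M_t(M_xf)$ combined with Fubini and the one-dimensional and $n$-dimensional HLW bounds.
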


{ Without loss of generality we can assume that the modulating coefficient functions $a(\cdot)$ and $b(\cdot)$ are defined in $\mr^{n+1}$ satisfying $a(\cdot)\in C^{\alpha,\frac{\alpha}{2}}(\mr^{n+1})$ and $b(\cdot)\in C^{\beta,\frac{\beta}{2}}(\mr^{n+1})$, see for instance \cite{Wontae2023a} and \cite[Theorem 2.7]{Kinnunen2021}.} Let $f\in L^p(\mr^{n+1})$, $f\geq 0$, be such that
$$
\iint_{\mr^{n+1}} (f^p+a f^q+b f^s)\, dz <\infty,
$$
and let $d=\frac{s-1+p}{2}$. By Lemma \ref{lem : property of maximal function}, there exists $\Lambda_0>1+\|a\|_{L^\infty(\mr^{n+1})}+\|b\|_{L^\infty (\mr^{n+1})}$ such that
$$
\begin{aligned}
\iint_{\mr^{n+1}}\left(M(f^d+(a f^q)^{\frac{d}{p}}+(b f^s)^{\frac{d}{p}})(z)\right)^\frac{p}{d}\, dz &\leq c(n,p,s) \iint_{\mr^{n+1}} (f^p+a f^q+ b f^s)\, dz\\ &\leq \Lambda_0.
\end{aligned}
$$

Let $\Lambda > \Lambda_0$ and we define the set
\begin{align}\label{defn of E}
    E(\Lambda)=\left\{z\in \mathbb{R}^{n+1}: \,\, M\left(f^d+(af^q)^{\frac{d}{p}}+(bf^s)^{\frac{d}{p}}\right)(z)\leq \Lambda^{\frac{d}{p}}\right\}.
\end{align}
Chebyshev's inequality implies that
\begin{equation}\label{limit of Λ|E(Λ)^c|}
    \lim_{\Lambda\rightarrow \infty} \Lambda|E(\Lambda)^c|\leq \lim_{\Lambda \rightarrow \infty}\iint_{E(\Lambda)^c}\left(M(f^d+(af^q)^\frac{d}{p}+(bf^s)^\frac{d}{p})(z)\right)^\frac{p}{d}\, dz=0.
\end{equation}
Now, we write $K\geq 2$ as
\begin{align}\label{defn of K}
    K:=2&+800[a]_{\alpha}\left(\frac{1}{|B_1|}\iint_{\mathbb{R}^{n+1}}\left(M(f^d+(a f^q)^{\frac{d}{p}}+(bf^s)^{\frac{d}{p}})\right)^{\frac{p}{d}}(z) \,dz\right)^{\frac{\alpha}{n+2}}\nonumber\\
    &+800[b]_{\beta}\left(\frac{1}{|B_1|}\iint_{\mathbb{R}^{n+1}}\left(M(f^d+(a f^q)^{\frac{d}{p}}+(bf^s)^{\frac{d}{p}})\right)^{\frac{p}{d}}(z) \,dz\right)^{\frac{\beta}{n+2}}.
\end{align}
Note that for each $z \in E(\Lambda)^c$, there exists a unique $\lambda_z>1$ such that $\Lambda = \lambda^p_z+a(z)\lambda^q_z+b(z)\lambda^s_z$.
We then consider a family of metrics $\{d_z(\cdot, \cdot)\}_{z \in E(\Lambda)^c}$ given by
{ $$\displaystyle
    d_z(z_1,z_2 )=\left\{\begin{array}{l}
    \max\left\{|x_1-x_2|, \sqrt{\lambda^{p-2}_z|t_1-t_2|}\right\}\\ 
    \qquad\qquad\qquad \text{if}\,\,\, K^2\lambda^p_z\geq a(z)\lambda^q_z \,\,\, \text{and}\,\,\,K^2\lambda^p_z\geq b(z)\lambda^s_z,\\
    \max\left\{|x_1-x_2|, \sqrt{g_q(z,\lambda_z)\lambda^{-2}_z|t_1-t_2|}\right\}\\
    \qquad\qquad\qquad \text{if}\,\,\, K^2\lambda^p_z< a(z)\lambda^q_z \,\,\, \text{and}\,\,\,K^2\lambda^p_z\geq b(z)\lambda^s_z,\\
    \max\left\{|x_1-x_2|, \sqrt{g_s(z,\lambda_z)\lambda^{-2}_z|t_1-t_2|}\right\}\\
    \qquad\qquad\qquad \text{if}\,\,\, K^2\lambda^p_z\geq a(z)\lambda^q_z \,\,\, \text{and}\,\,\,K^2\lambda^p_z< b(z)\lambda^s_z,\\
    \max\left\{|x_1-x_2|, \sqrt{g_{q,s}(z,\lambda_z)\lambda^{-2}_z|t_1-t_2|}\right\}\\
    \qquad\qquad\qquad \text{if}\,\,\, K^2\lambda^p_z< a(z)\lambda^q_z \,\,\, \text{and}\,\,\,K^2\lambda^p_z< b(z)\lambda^s_z
    \end{array}\right.
$$
for $z_1=(x_1,t_1),\,z_2=(x_2,t_2)\in \mr^{n+1}$, where the functions \(g_q\), \(g_s\), and \(g_{q,s}\), which are \(q\), \(s\), and \((q,s)\)-growth functions, respectively, defined as:
\begin{align} \label{def: q-growth function}
    g_q(z,\kappa)&:= \kappa^p+a(z)\kappa^q,\\ \label{def: s-growth function}
    g_s(z,\kappa)&:= \kappa^p+b(z)\kappa^s,\\ \label{def: (q,s)-growth function}
    g_{q,s}(z,\kappa)&:= \kappa^p+a(z)\kappa^q+b(z)\kappa^s
\end{align}
for $z\in \Omega_T$ and $\kappa\in \mr^+$.}
For every $z \in E(\Lambda)^c,$ we define a distance function $z$ to $E(\Lambda)$ as
\begin{align}\label{eq : definition of r_z}
    4r_z=d_z(z, E(\Lambda))=\inf_{w \in E(\Lambda)}d_z(z, w).
\end{align}
Using this distance, we construct a family of open subsets $\{U^{\Lambda}_z\}_{z\in E(\Lambda)^c}$ defined as
\begin{align*}
    U^{\Lambda}_z=\begin{cases}
Q_{r_z, \lambda_z} \,\,\quad \text{if}\,\,\, K^2\lambda^p_z\geq a(z)\lambda^q_z \,\,\, \text{and}\,\,\,K^2\lambda^p_z\geq b(z)\lambda^s_z,\\
Q^{q}_{r_z, \lambda_z}\,\,\quad \text{if}\,\,\, K^2\lambda^p_z< a(z)\lambda^q_z \,\,\, \text{and}\,\,\,K^2\lambda^p_z\geq b(z)\lambda^s_z,\\
Q^{s}_{r_z, \lambda_z}\,\,\quad \text{if}\,\,\, K^2\lambda^p_z\geq a(z)\lambda^q_z \,\,\, \text{and}\,\,\,K^2\lambda^p_z< b(z)\lambda^s_z,\\
Q^{q, s}_{r_z, \lambda_z}\,\,\quad \text{if}\,\,\, K^2\lambda^p_z< a(z)\lambda^q_z \,\,\, \text{and}\,\,\,K^2\lambda^p_z< b(z)\lambda^s_z.
    \end{cases}
\end{align*}
Here, the intrinsic cylinders denoted as follows:
\begin{align*}
    Q_{r_z,\lambda_z}(z)&:=B_{r_z}(x)\times I_{r_z,\lambda_z}(t),\\
    Q_{r_z,\lambda_z}^q(z)&:=B_{r_z}(x)\times I_{r_z,\lambda_z}^q(t),\\
    Q_{r_z,\lambda_z}^s(z)&:=B_{r_z}(x)\times I_{r_z,\lambda_z}^s(t),\\
    Q_{r_z,\lambda_z}^{q,s}(z)&:=B_{r_z}(x)\times I_{r_z,\lambda_z}^{q,s}(t)
\end{align*}
for any $z=(x,t)\in \mr^n\times (0,T)$, $r_z>0$, $\lambda_z\geq 1$, where
\begin{align*}
    I_{r_z,\lambda_z}(t)&:=\left(t-\lambda_z^{2-p}r_{z}^2, \ t+\lambda_z^{2-p}r_z^2\right),\\
    I_{r_z,\lambda_z}^q(t)&:=\left(t-\frac{\lambda_z^2}{g_{q}(z,\lambda_z)}r_z^2, \ t+\frac{\lambda_z^2}{g_{q}(z,\lambda_z)}r_z^2\right),\\
    I_{r_z,\lambda_z}^s(t)&:=\left(t-\frac{\lambda_z^2}{g_{s}(z,\lambda_z)}r_z^2, \ t+\frac{\lambda_z^2}{g_{s}(z,\lambda_z)}r_z^2\right),\\
    I_{r_z,\lambda_z}^{q,s}(t)&:=\left(t-\frac{\lambda_z^2}{g_{q,s}(z,\lambda_z)}r_z^2, \ t+\frac{\lambda_z^2}{g_{q,s}(z,\lambda_z)}r_z^2\right).
\end{align*}

Let $f\in L^1(\Omega_T)$ be a function, and let $Q\subset \Omega_T$ be a measurable set with finite positive measure. We define the integral average of $f$ over $Q$ by $$(f)_Q:=\miint{Q} f \,dz.$$
If $Q$ is one of the four intrinsic cylinders defined above, we denote it as follows:
\begin{align*}
    (f)_{z_0;\rho,\lambda}&:=\miint{Q_{\rho,\lambda}(z_0)}f\,dz,\\
    (f)_{z_0;\rho,\lambda}^{(q)}&:=\miint{Q_{\rho,\lambda}^q(z_0)}f\,dz,\\
    (f)_{z_0;\rho,\lambda}^{(s)}&:=\miint{Q_{\rho,\lambda}^s(z_0)}f\,dz,\\
    (f)_{z_0;\rho,\lambda}^{(q,s)}&:=\miint{Q_{\rho,\lambda}^{q,s}(z_0)}f\,dz.
\end{align*}
The integral average on a ball $B_\rho(x_0)\subset \Omega$ is denoted by
$$
(f)_{x_0;\rho}(t):= \dashint_{B_\rho(x_0)}f(x,t)\, dx, \quad t\in(0,T).
$$
{ We intend to use a Vitali-type argument to find a countable collection of points $z_i \in E(\Lambda)^c$ such that the corresponding subfamily satisfies some properties.} From now on, we denote
$$
\begin{array}{c}
    \lambda_i=\lambda_{z_i},\,\,\,\, d_i(\cdot, \cdot)=d_{z_i}(\cdot, \cdot)\\
    Q_i=Q_{r_{z_i},\lambda_{z_i}},\quad  Q^{q}_{i}=Q^q_{r_{z_i}, \lambda_{z_i}},\quad Q^{s}_{i}=Q^s_{r_{z_i}, \lambda_{z_i}},\quad Q^{q, s}_{i}=Q^{q,s}_{r_{z_i}, \lambda_{z_i}}
\end{array}
$$
and 
\begin{align} \label{EQQ2.4}
U_i=B_i\times I_i=\begin{cases}
    Q_i \,\,\, \quad \text{if}\,\,\, K^2\lambda^p_i\geq a(z_i)\lambda^q_i \,\,\, \text{and}\,\,\,K^2\lambda^p_i\geq b(z_i)\lambda^s_i,\\
Q^{q}_i\,\,\, \quad \text{if}\,\,\, K^2\lambda^p_i< a(z_i)\lambda^q_i \,\,\, \text{and}\,\,\,K^2\lambda^p_i\geq b(z_i)\lambda^s_i,\\
Q^{s}_i\,\,\, \quad \text{if}\,\,\, K^2\lambda^p_i\geq a(z_i)\lambda^q_i \,\,\, \text{and}\,\,\,K^2\lambda^p_i< b(z_i)\lambda^s_i,\\
Q^{q, s}_i\,\,\,\ \text{if}\,\,\, K^2\lambda^p_i< a(z_i)\lambda^q_i \,\,\, \text{and}\,\,\,K^2\lambda^p_i< b(z_i)\lambda^s_i.
\end{cases}
\end{align}
Moreover, we denote
 \begin{equation}\label{EQQ2.5}
    \begin{aligned}
    &d_i(U_i, E(\Lambda))=\inf_{z\in U_i, w\in E(\Lambda)}d_i(z, w),\quad \mathcal{I}=\left\{j\in \mathbb{N}: \frac{2}{K}U_i\cap \frac{2}{K}U_j\neq \emptyset\right\},\\
    &K_i=200K^3 \,\,\, \text{for}\,\,\, U_i=Q_i,\,\,\, Q^q_i,\,\,\, \text{or}\,\,\, Q^s_i\,\,\, \text{and}\,\,\, K_i=200 \,\,\, \text{for}\,\,\, U_i=Q^{q,s}_i.
\end{aligned}
\end{equation}

\subsection{\bf Some preliminary estimates} We start with some basic estimates.
\begin{lemma}\label{LEM2.1}
    Let $z\in E(\Lambda)^c$. Assume $K^2 \lambda^p_z< a(z)\lambda^q_z$ and $K^2 \lambda^p_z< b(z)\lambda^s_z.$ Then $\frac{a(z)}{2}\leq a(\tilde{z})\leq 2a(z)$ and $\frac{b(z)}{2}\leq b(\tilde{z})\leq 2b(z)$ for any $\tilde{z} \in 200K Q_{r_z}(z)$.
\end{lemma}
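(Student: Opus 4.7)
The plan is to combine three ingredients: a volume bound that forces $r_z$ to be small relative to $\lambda_z$; the case hypothesis $K^2\lambda_z^p<a(z)\lambda_z^q$ and $K^2\lambda_z^p<b(z)\lambda_z^s$, which produces lower bounds on $a(z)$ and $b(z)$; and the quantitative choice of $K$ in \eqref{defn of K}, tailored so that the H\"older oscillation of $a$ and $b$ gets absorbed.

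First I would note that, under the assumption, the intrinsic cylinder is $U_z^\Lambda=Q_{r_z,\lambda_z}^{q,s}(z)$, and since $g_{q,s}(z,\lambda_z)=\Lambda$ by the definition of $\lambda_z$, one has $|U_z^\Lambda|=2|B_1|r_z^{n+2}\lambda_z^2/\Lambda$. The identity $4r_z=d_z(z,E(\Lambda))$ together with the triangle inequality for $d_z$ gives $U_z^\Lambda\subset E(\Lambda)^c$. Combining with the Chebyshev-type estimate that also underlies \eqref{limit of Λ|E(Λ)^c|}, namely $\Lambda|E(\Lambda)^c|\leq |B_1|J$ where $J:=\frac{1}{|B_1|}\iint_{\mr^{n+1}}(M(f^d+(af^q)^{d/p}+(bf^s)^{d/p}))^{p/d}\,dw$, yields
\[
r_z^{n+2}\lambda_z^2 \leq \tfrac{J}{2}, \qquad \text{hence}\qquad r_z^\alpha \leq J^{\alpha/(n+2)}\lambda_z^{-2\alpha/(n+2)}.
\]

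For the H\"older estimate, $\tilde z=(\tilde x,\tilde t)\in 200KQ_{r_z}(z)$ gives $|\tilde x-x|<200Kr_z$ and $|\tilde t-t|<(200Kr_z)^2$, so $|a(\tilde z)-a(z)|\leq [a]_\alpha(400Kr_z)^\alpha$. The definition \eqref{defn of K} of $K$ gives $[a]_\alpha J^{\alpha/(n+2)}\leq K/800$, so plugging in the bound on $r_z^\alpha$ above,
\[
|a(\tilde z)-a(z)| \leq 400^\alpha K^\alpha\cdot\tfrac{K}{800}\cdot\lambda_z^{-2\alpha/(n+2)} \leq \tfrac{K^{\alpha+1}}{2}\lambda_z^{-2\alpha/(n+2)}.
\]

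To close the argument, the case hypothesis gives $a(z)>K^2\lambda_z^{p-q}$, and $q-p\leq 2\alpha/(n+2)$ from \eqref{eq : condition of n,p,q,alpha} upgrades this to $a(z)>K^2\lambda_z^{-2\alpha/(n+2)}$. Since $K\geq 2$ and $\alpha\leq 1$ give $K^{\alpha+1}\leq K^2$, I conclude $|a(\tilde z)-a(z)|\leq a(z)/2$, which is equivalent to $a(z)/2\leq a(\tilde z)\leq 2a(z)$. The argument for $b$ is identical, using $\beta$, $[b]_\beta$, \eqref{eq : condition of n,p,s,beta}, and the case assumption on $b$. The main subtlety is the matching of $\lambda_z$-exponents: the volume-based upper bound on $r_z^\alpha$ produces precisely the power $\lambda_z^{-2\alpha/(n+2)}$, which is exactly the power that the sharpness condition on $q-p$ allows one to cancel against the lower bound $a(z)>K^2\lambda_z^{p-q}$.
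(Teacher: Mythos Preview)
Your proof is correct and follows essentially the same route as the paper. Both arguments hinge on the inclusion $U_z^\Lambda\subset E(\Lambda)^c$ together with the global bound on $\iint(M(\cdots))^{p/d}$ to obtain $r_z^{n+2}\lambda_z^2\lesssim J$, then combine this with the definition of $K$, the gap condition $q-p\le 2\alpha/(n+2)$, and the case hypothesis $a(z)>K^2\lambda_z^{p-q}$ to absorb the H\"older oscillation. The only differences are presentational: the paper phrases the key step as a proof by contradiction and bounds the \emph{local} integral of $(M(\cdots))^{p/d}$ over $Q^{q,s}_{r_z,\lambda_z}(z)$, whereas you argue directly and pass through the global Chebyshev bound $\Lambda|E(\Lambda)^c|\le |B_1|J$; both yield the same inequality $r_z^\alpha\lambda_z^{2\alpha/(n+2)}\le K/(800[a]_\alpha)$.
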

\begin{proof}
    We claim that $2[a]_{\alpha}(200 K r_z)^{\alpha}< a(z)$ and $2[b]_{\beta}(200 K r_z)^{\beta}< b(z).$ We will only prove the second statement, and the proof of the first statement is similar. On the contrary, let us assume $b(z)\leq 2[b]_{\beta}(200 K r_z)^{\beta}.$ Since $Q^{q, s}_{r_z, \lambda_z}(z)\subset E(\Lambda)^c$ and $a(z)\lambda^q_z+b(z)\lambda^s_z < \Lambda$, we get
    \begin{align*}
    a(z)\lambda^q_z+b(z)\lambda^s_z< \miint{Q^{q, s}_{r_z, \lambda_z}(z)}\left(M \left(f^d+(a f^q)^{d/p}+(b f^s)^{d/p}\right)(w)\right)^{p/d}\,dw.
    \end{align*}
    Now using $\Lambda = \lambda^p_z+a(z)\lambda^q_z+b(z)\lambda^s_z < 2[a(z)\lambda^q_z+b(z)\lambda^s_z]$, we obtain
 \begin{align*}
 &a(z)\lambda^q_z+b(z)\lambda^s_z< \miint{Q^{q, s}_{r_z, \lambda_z}(z)}\left(M \left(f^d+(a f^q)^{d/p}+(b f^s)^{d/p}\right)(w)\right)^{p/d}\,dw\\
 &\qquad\qquad= \frac{\Lambda \lambda^{-2}_z}{2|B_1|r^{n+2}_z}\iint_{Q^{q, s}_{r_z, \lambda_z}(z)}\left(M \left(f^d+(a f^q)^{d/p}+(b f^s)^{d/p}\right)(w)\right)^{p/d}\,dw\\
 &\qquad\qquad<\frac{(a(z)\lambda^q_z+b(z)\lambda^s_z)\lambda^{-2}_z}{|B_1|r^{n+2}_z}\\
 &\qquad\qquad\qquad\times\iint_{Q^{q, s}_{r_z, \lambda_z}(z)}\left(M \left(f^d+(a f^q)^{d/p}+(b f^s)^{d/p}\right)(w)\right)^{p/d}\,dw.
    \end{align*}
    Raising the power to $\frac{\beta}{n+2}$ in both of sides to the above expression, we have
 \begin{align*}
        r^{\beta}_z\lambda^{\frac{2\beta}{n+2}}_z&<\left(\frac{1}{|B_1|}\iint_{Q^{q, s}_{r_z, \lambda_z}(z)}\left(M \left(f^d+(a f^q)^{d/p}+(b f^s)^{d/p}\right)(w)\right)^{p/d}\,dw\right)^{\frac{\beta}{n+2}}\\
        &\leq \frac{K}{800 [b]_{\beta}}
    \end{align*}
    Now using $K^2 \lambda^p_z< b(z)\lambda^s_z,$ $s \leq p+\frac{2\beta}{n+2}$ and the counter assumption, we get
    \begin{align*}
        K^2 \lambda^p_z< b(z)\lambda^s_z\leq 2[b]_{\beta}(200 K r_z)^{\beta}\lambda^p_z\lambda^{\frac{2\beta}{n+2}}_z\leq \frac{1}{2}K^2\lambda^p_z,
    \end{align*}
    which is a contradiction. This completes the proof of $2[b]_{\beta}(200 K r_z)^{\beta}< b(z).$ Now using the H\"{o}lder continuity of $b(z),$ we get
    \begin{align*}
        2[b]_{\beta}(200 K r_z)^{\beta}< b(z) \leq \inf_{\tilde{z} \in 200K Q_{r_z}(z)} b(\tilde{z})+ [b]_{\beta}(200K r_z)^{\beta},
    \end{align*}
    and hence
    \begin{align*}
      [b]_{\beta}(200 K r_z)^{\beta} \leq \inf_{\tilde{z} \in 200K Q_{r_z}(z)} b(\tilde{z}).  
    \end{align*}
It follows that
\begin{align*}
    \sup_{\tilde{z}\in 200K Q_{r_z}(z)}b(\tilde{z}) \leq \inf_{\tilde{z} \in 200K Q_{r_z}(z)} b(\tilde{z})+[b]_{\beta}(200 K r_z)^{\beta}<2\inf_{\tilde{z} \in 200K Q_{r_z}(z)}b(\tilde{z})
\end{align*}
and this proves $\frac{b(z)}{2}\leq b(\tilde{z}) \leq 2b(z)$ for any $\tilde{z} \in 200K Q_{r_z}(z).$ The statement for $a(\cdot)$ can be proved similarly.
\end{proof}
{ \begin{lemma}   \label{lem : comparision of b(cdot) in p,s-phase}
 Let $z\in E(\Lambda)^c$. Assume $K^2 \lambda^p_z\geq a(z)\lambda^q_z$ and $K^2 \lambda^p_z< b(z)\lambda^s_z.$ Then $\frac{b(z)}{2}\leq b(\tilde{z})\leq 2b(z)$ for any $\tilde{z} \in 200K Q_{r_z}(z).$ Moreover, we have 
 \begin{equation}\label{EQ2.1}
 [a]_{\alpha}(800 K r_z)^{\alpha}\lambda^q_z\leq (K^2-1)\lambda^p_z.
 \end{equation}
\end{lemma}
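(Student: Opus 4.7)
The plan is to mirror the contradiction scheme of Lemma~\ref{LEM2.1} in the $(p,s)$-phase and then to harvest, as a byproduct of the resulting scale estimate, the inequality \eqref{EQ2.1}. The structural difference from Lemma~\ref{LEM2.1} is that now only $b(z)\lambda_{z}^{s}$ (and not $a(z)\lambda_{z}^{q}$) dominates $\lambda_{z}^{p}$, so the natural intrinsic cylinder sitting inside $E(\Lambda)^{c}$ is $Q_{r_{z},\lambda_{z}}^{s}(z)$, and only the H\"older comparison of $b$ can be extracted.

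For the first claim I would argue by contradiction, assuming $b(z)\leq 2[b]_{\beta}(200Kr_{z})^{\beta}$. Because $Q_{r_{z},\lambda_{z}}^{s}(z)\subset E(\Lambda)^{c}$, the pointwise bound $(M(f^{d}+(af^{q})^{d/p}+(bf^{s})^{d/p}))^{p/d}>\Lambda$ holds throughout this cube; integrating against $|Q_{r_{z},\lambda_{z}}^{s}(z)|=2|B_{1}|r_{z}^{n+2}\lambda_{z}^{2}/g_{s}(z,\lambda_{z})$ and using the trivial bound $g_{s}(z,\lambda_{z})\leq \Lambda$ yields
$$
r_{z}^{n+2}\lambda_{z}^{2}\leq \frac{1}{|B_{1}|}\iint_{\mathbb{R}^{n+1}}\bigl(M(f^{d}+(af^{q})^{d/p}+(bf^{s})^{d/p})(w)\bigr)^{p/d}dw.
$$
Raising this to the power $\beta/(n+2)$ and invoking \eqref{defn of K} produces $r_{z}^{\beta}\lambda_{z}^{2\beta/(n+2)}\leq K/(800[b]_{\beta})$. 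Plugging this into the chain
$$
K^{2}\lambda_{z}^{p}<b(z)\lambda_{z}^{s}\leq 2[b]_{\beta}(200Kr_{z})^{\beta}\lambda_{z}^{p+\frac{2\beta}{n+2}}\leq \tfrac{K^{2}}{2}\lambda_{z}^{p},
$$
where the absorption of $\lambda_{z}^{s}$ uses $s\leq p+\tfrac{2\beta}{n+2}$ together with $\lambda_{z}\geq 1$, delivers the desired contradiction. The oscillation bound $b(z)/2\leq b(\tilde z)\leq 2b(z)$ on $200KQ_{r_{z}}(z)$ then follows verbatim from the H\"older continuity of $b$, exactly as in the closing lines of Lemma~\ref{LEM2.1}.

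For \eqref{EQ2.1}, I would reuse the displayed scale estimate (now available unconditionally, independently of any contradiction hypothesis) raised instead to the power $\alpha/(n+2)$. The crucial refinement is that the additive structure of \eqref{defn of K} gives the sharper bound
$$
r_{z}^{\alpha}\lambda_{z}^{2\alpha/(n+2)}\leq \biggl(\frac{1}{|B_{1}|}\iint_{\mathbb{R}^{n+1}}\bigl(M(f^{d}+(af^{q})^{d/p}+(bf^{s})^{d/p})(w)\bigr)^{p/d}dw\biggr)^{\frac{\alpha}{n+2}}\leq \frac{K-2}{800[a]_{\alpha}}
$$
rather than the rougher $K/(800[a]_{\alpha})$. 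Combining this with $(800Kr_{z})^{\alpha}\leq 800Kr_{z}^{\alpha}$ (valid because $K\geq 1$ and $\alpha\leq 1$), together with $q\leq p+\tfrac{2\alpha}{n+2}$ and $\lambda_{z}\geq 1$, leads to
$$
[a]_{\alpha}(800Kr_{z})^{\alpha}\lambda_{z}^{q}\leq 800K[a]_{\alpha}r_{z}^{\alpha}\lambda_{z}^{\frac{2\alpha}{n+2}}\lambda_{z}^{p}\leq K(K-2)\lambda_{z}^{p}\leq (K^{2}-1)\lambda_{z}^{p},
$$
the last inequality using $K\geq 2$.

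The main obstacle is precisely this constant bookkeeping in the second claim: the naive bound $K_{a}\leq K$ only delivers $K^{2}\lambda_{z}^{p}$ on the right-hand side and falls short of the desired $K^{2}-1$, so the additive decomposition $K\geq 2+K_{a}+K_{b}$ in \eqref{defn of K} must be tracked explicitly. The first claim itself is a direct transcription of Lemma~\ref{LEM2.1}, with $Q_{r_{z},\lambda_{z}}^{q,s}(z)$ replaced by $Q_{r_{z},\lambda_{z}}^{s}(z)$ and the identity $g_{q,s}(z,\lambda_{z})=\Lambda$ replaced by the inequality $g_{s}(z,\lambda_{z})\leq\Lambda$.
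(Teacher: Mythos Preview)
Your proposal is correct and follows essentially the same approach as the paper. The paper first derives the scale estimate $r_z^{\alpha}\lambda_z^{2\alpha/(n+2)}<\frac{K-1}{800[a]_\alpha}$ from $Q^{s}_{r_z,\lambda_z}(z)\subset E(\Lambda)^c$ (your version with $K-2$ is in fact slightly tighter), immediately uses it to obtain \eqref{EQ2.1} via the same chain $(800Kr_z)^\alpha\le 800Kr_z^\alpha$ and $q\le p+\tfrac{2\alpha}{n+2}$, and then simply refers back to Lemma~\ref{LEM2.1} for the $b$-oscillation bound; you spell out that last step explicitly, but the contradiction argument is identical.
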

\begin{proof}
    First, we note that $Q^{s}_{r_z, \lambda_z}(z)\subset E(\Lambda)^c,$ and therefore
    \begin{align*}
        &g_s(z,\lambda_z) < \miint{Q^{s}_{r_z, \lambda_z}(z)}\left(M \left(f^d+(a f^q)^{d/p}+(b f^s)^{d/p}\right)(w)\right)^{p/d}\,dw\\
        &\qquad=\frac{g_s(z,\lambda_z)\lambda^{-2}_z}{2|B_1|r^{n+2}_z}\iint_{Q^{s}_{r_z, \lambda_z}(z)}\left(M \left(f^d+(a f^q)^{d/p}+(b f^s)^{d/p}\right)(w)\right)^{p/d}\,dw.
    \end{align*}
    Raising power $\frac{\alpha}{n+2}$ in the above expression, we have
    \begin{align*}
     r^{\alpha}_z \lambda^{\frac{2\alpha}{n+2}}_z&< \left(\frac{1}{|B_1|}\iint_{Q^{s}_{r_z, \lambda_z}(z)}  \left(M \left(f^d+(a f^q)^{d/p}+(b f^s)^{d/p}\right)(w)\right)^{p/d}\,dw\right)^{\frac{\alpha}{n+2}}\\ 
     &\leq \frac{K-1}{800[a]_{\alpha}}. 
    \end{align*}
Using $q\leq p+\frac{2\alpha}{n+2},$ we get
\begin{align*}
    [a]_{\alpha}(800K r_z)^{\alpha}\lambda^q_z \leq [a]_{\alpha}800K r^{\alpha}_z\lambda^p_z \lambda^{\frac{2\alpha}{n+2}}_z &\leq [a]_{\alpha}800K \lambda^p_z\frac{K-1}{800[a]_{\alpha}}\\
    &\leq (K^2-1)\lambda^p_z.
\end{align*}
    
    Moreover, the proof of the first statement follows from the previous lemma. 
This completes the proof.
\end{proof}
\begin{lemma}   \label{lem : comparision of a(cdot) in p,q-phase}
    Let $z\in E(\Lambda)^c$. Assume $K^2 \lambda^p_z< a(z)\lambda^q_z$ and $K^2 \lambda^p_z\geq b(z)\lambda^s_z.$ Then $\frac{a(z)}{2}\leq a(\tilde{z})\leq 2a(z)$ for any $\tilde{z} \in 200K Q_{r_z}(z).$ Furthermore, we have \begin{equation}\label{Equation2.14} 
    [b]_{\beta}(800 K r_z)^{\beta}\lambda^s_z\leq (K^2-1)\lambda^p_z.\end{equation}
\end{lemma}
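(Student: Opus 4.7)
The proof should follow by swapping the roles of $(a,q,\alpha)$ and $(b,s,\beta)$ in Lemma~\ref{lem : comparision of b(cdot) in p,s-phase}: here the dominant modulating coefficient is $a$, so the relevant intrinsic cylinder dictated by the metric $d_z$ is $Q^q_{r_z,\lambda_z}(z)$ in place of $Q^s_{r_z,\lambda_z}(z)$. The key structural fact, coming from $4r_z=d_z(z,E(\Lambda))$ together with the form of $d_z$ in this phase, is that $Q^q_{r_z,\lambda_z}(z)\subset E(\Lambda)^c$; hence pointwise on this cylinder one has $(M(f^d+(af^q)^{d/p}+(bf^s)^{d/p}))^{p/d}(w)>\Lambda$, so the average exceeds $\Lambda$ and in particular exceeds both $g_q(z,\lambda_z)$ and $a(z)\lambda_z^q$.

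For the ``furthermore'' assertion, my plan is to mimic the computation producing \eqref{EQ2.1}. Using $g_q(z,\lambda_z)\leq\Lambda$ together with the inclusion above, I would write
\[
g_q(z,\lambda_z)<\miint{Q^q_{r_z,\lambda_z}(z)}\bigl(M(f^d+(af^q)^{d/p}+(bf^s)^{d/p})(w)\bigr)^{p/d}\,dw,
\]
and then rewrite this average via $|Q^q_{r_z,\lambda_z}(z)|=2|B_1|r_z^{n+2}\lambda_z^2/g_q(z,\lambda_z)$, so that the factor $g_q(z,\lambda_z)$ cancels and one is left with $r_z^{n+2}\lambda_z^2\leq\frac{1}{|B_1|}\iint_{\mr^{n+1}}(M(\cdots))^{p/d}\,dw$. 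Raising to the power $\beta/(n+2)$ and invoking \eqref{defn of K} yields $r_z^\beta\lambda_z^{2\beta/(n+2)}\leq(K-1)/(800[b]_\beta)$, and combining this with $s\leq p+2\beta/(n+2)$ from \eqref{eq : condition of n,p,s,beta}, $\lambda_z\geq1$, and $(800K)^\beta\leq 800K$ (since $\beta\in(0,1]$ and $K\geq2$) gives
\[
[b]_\beta(800Kr_z)^\beta\lambda_z^s\leq [b]_\beta\cdot 800K\cdot r_z^\beta\lambda_z^{2\beta/(n+2)}\lambda_z^p\leq K(K-1)\lambda_z^p\leq(K^2-1)\lambda_z^p,
\]
which is \eqref{Equation2.14}.

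For the first assertion, I would argue by contradiction, exactly as in Lemma~\ref{LEM2.1}. Suppose $a(z)\leq 2[a]_\alpha(200Kr_z)^\alpha$. From $K^2\lambda_z^p<a(z)\lambda_z^q$ together with $b(z)\lambda_z^s\leq K^2\lambda_z^p$, one obtains $\lambda_z^p<K^{-2}a(z)\lambda_z^q$, and hence $g_q(z,\lambda_z)\leq(1+K^{-2})a(z)\lambda_z^q\leq 2a(z)\lambda_z^q$. Since also $a(z)\lambda_z^q\leq\Lambda$ is strictly less than the average above, substituting this bound on $g_q(z,\lambda_z)$ into the normalization factor and cancelling $a(z)\lambda_z^q$ yields $r_z^{n+2}\lambda_z^2\leq\frac{1}{|B_1|}\iint_{\mr^{n+1}}(M(\cdots))^{p/d}\,dw$, so raising to $\alpha/(n+2)$ produces $r_z^\alpha\lambda_z^{2\alpha/(n+2)}\leq K/(800[a]_\alpha)$ by \eqref{defn of K}. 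Plugging this and the counter-assumption back into $K^2\lambda_z^p<a(z)\lambda_z^q$, together with $q\leq p+2\alpha/(n+2)$ from \eqref{eq : condition of n,p,q,alpha} and $\alpha\in(0,1]$, yields
\[
K^2\lambda_z^p<a(z)\lambda_z^q\leq 2[a]_\alpha(200Kr_z)^\alpha\lambda_z^p\lambda_z^{2\alpha/(n+2)}\leq\tfrac{1}{2}K^2\lambda_z^p,
\]
a contradiction. Hence $2[a]_\alpha(200Kr_z)^\alpha<a(z)$, and the H\"older continuity of $a$ then gives $\frac{a(z)}{2}\leq a(\tilde z)\leq 2a(z)$ for $\tilde z\in 200KQ_{r_z}(z)$ exactly as at the end of Lemma~\ref{LEM2.1}.

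The main point to be careful about is the correct identification of the phase-dependent intrinsic cylinder $Q^q_{r_z,\lambda_z}(z)$ (and its measure) in the current regime; once this is in place, the proof is a symmetric rewrite of Lemmas~\ref{LEM2.1} and \ref{lem : comparision of b(cdot) in p,s-phase}, and no new analytical idea is needed. The three ingredients that have to line up in each computation are the cancellation of the normalizing factor $g_q(z,\lambda_z)$, the sharp scaling exponent relations $q\leq p+2\alpha/(n+2)$ and $s\leq p+2\beta/(n+2)$, and the calibration of $K$ in \eqref{defn of K}.
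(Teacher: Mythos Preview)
Your proposal is correct and follows essentially the same approach as the paper: the paper's own proof is terse, simply noting that $Q^q_{r_z,\lambda_z}(z)\subset E(\Lambda)^c$ gives the starting inequality with $g_q$, then referring to ``the previous lemma'' (Lemma~\ref{lem : comparision of b(cdot) in p,s-phase}) for \eqref{Equation2.14} and to Lemma~\ref{LEM2.1} for the comparability of $a$. Your write-up spells out exactly these steps with the correct cylinder, measure normalization, and scaling exponents, so it matches the paper's argument.
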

\begin{proof}
Since $Q^q_{r_z, \lambda_z} \subset E(\Lambda)^c,$ we have
 \begin{align*}
    &g_q(z,\lambda_z)< \miint{Q^{s}_{r_z, \lambda_z}(z)}\left(M \left(f^d+(a f^q)^{d/p}+(b f^s)^{d/p}\right)(w)\right)^{p/d}\,dw\\
        &\qquad=\frac{g_{q}(z,\lambda_z)\lambda^{-2}_z}{2|B_1|r^{n+2}_z}\iint_{Q^{s}_{r_z, \lambda_z}(z)}\left(M \left(f^d+(a f^q)^{d/p}+(b f^s)^{d/p}\right)(w)\right)^{p/d}\,dw.
\end{align*}
Moreover, following the previous lemma, we get
\begin{align*}
    [b]_{\beta}(800 K r_z)^{\beta}\lambda^s_z \leq (K^2-1)\lambda^p_z.
\end{align*}
Also, the first statement follows from Lemma \ref{LEM2.1}.
This completes the proof.
\end{proof}
\begin{lemma}   \label{lem : relation of lambda z and lambda tilde z in p,s-phase}
    Let $z, \tilde{z} \in E(\Lambda)^c$.  Assume $K^2 \lambda^p_z \geq a(z)\lambda^q_z$ and $K^2\lambda^p_z <b(z)\lambda^s_z,$ and $\tilde{z} \in 200K Q_{r_z}(z)$. Then $K^{-\frac{2}{p}}\lambda_{\tilde{z}} \leq \lambda_{z} \leq K^{\frac{2}{p}}\lambda_{\tilde{z}}$.
\end{lemma}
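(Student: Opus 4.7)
The plan is to argue by contradiction in each direction, using the defining identity
\[
\Lambda = \lambda_w^p + a(w)\lambda_w^q + b(w)\lambda_w^s \qquad (w \in E(\Lambda)^c)
\]
at both $w = z$ and $w = \tilde z$, together with the outputs of Lemma \ref{lem : comparision of b(cdot) in p,s-phase}: the comparability $b(z)/2 \leq b(\tilde z) \leq 2b(z)$ on $200KQ_{r_z}(z)$ and the quantitative estimate \eqref{EQ2.1}. From the $(p,s)$-phase assumption at $z$, namely $a(z)\lambda_z^q \leq K^2\lambda_z^p$ and $b(z)\lambda_z^s > K^2\lambda_z^p$, I obtain the baseline
\[
\Lambda > (1+K^2)\lambda_z^p.
\]
Combining \eqref{EQ2.1} with the standing H\"older continuity of $a$ from \eqref{eq : condition of n,p,q,alpha} then gives $a(\tilde z)\lambda_z^q \leq (2K^2-1)\lambda_z^p$.

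For the upper bound $\lambda_z \leq K^{2/p}\lambda_{\tilde z}$, I assume the opposite, so that $(\lambda_{\tilde z}/\lambda_z)^p < K^{-2}$. Since $q,s>p$ and $\lambda_{\tilde z}/\lambda_z<1$, the same bound $K^{-2}$ governs $(\lambda_{\tilde z}/\lambda_z)^q$ and $(\lambda_{\tilde z}/\lambda_z)^s$. Estimating each term of $\Lambda = \lambda_{\tilde z}^p + a(\tilde z)\lambda_{\tilde z}^q + b(\tilde z)\lambda_{\tilde z}^s$ using the previous bounds, and $b(\tilde z)\lambda_{\tilde z}^s \leq 2b(z)\lambda_z^s(\lambda_{\tilde z}/\lambda_z)^s \leq 2\Lambda/K^2$, yields
\[
\Lambda \leq \frac{\lambda_z^p}{K^2} + \frac{(2K^2-1)\lambda_z^p}{K^2} + \frac{2\Lambda}{K^2} = 2\lambda_z^p + \frac{2\Lambda}{K^2}.
\]
Absorbing the $\Lambda$-term and using $K \geq 2$ produces $\Lambda \leq \frac{2K^2}{K^2-2}\lambda_z^p$. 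As $K^4 - 3K^2 - 2 > 0$ for $K \geq 2$, this contradicts $\Lambda > (1+K^2)\lambda_z^p$.

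For the lower bound $\lambda_{\tilde z} \leq K^{2/p}\lambda_z$, I instead assume $(\lambda_{\tilde z}/\lambda_z)^p > K^2$. Using $b(\tilde z) \geq b(z)/2$ together with $(\lambda_{\tilde z}/\lambda_z)^s \geq (\lambda_{\tilde z}/\lambda_z)^p$ (which holds because $s>p$ and the ratio exceeds $1$),
\[
\Lambda \geq \lambda_{\tilde z}^p + \frac{b(z)}{2}\lambda_{\tilde z}^s \geq \left(\frac{\lambda_{\tilde z}}{\lambda_z}\right)^p\!\left(\lambda_z^p + \frac{b(z)}{2}\lambda_z^s\right) > K^2\!\left(\lambda_z^p + \frac{b(z)}{2}\lambda_z^s\right).
\]
Pairing this with the upper bound $\Lambda \leq (1+K^2)\lambda_z^p + b(z)\lambda_z^s$ and setting $B := b(z)\lambda_z^{s-p} > K^2$, the inequality reduces to $K^2(1 + B/2) < 1 + K^2 + B$. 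The ratio $(1 + K^2 + B)/(1 + B/2)$ is decreasing in $B$, so its value on $B > K^2$ is bounded above by $(1 + 2K^2)/(1 + K^2/2)$; the required strict inequality rearranges to $K^4 - 2K^2 - 2 \geq 0$, which holds for $K \geq 2$ and supplies the contradiction.

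The main obstacle is that both contradictions are tight at $K = 2$, which is the minimum of $K$ by \eqref{defn of K}: the factor $2$ in $b(\tilde z) \leq 2b(z)$ provides only a narrow margin, so the constants must be tracked carefully and no slack should be introduced in the intermediate estimates. Once the elementary algebraic inequalities $K^4 - 3K^2 - 2 > 0$ and $K^4 - 2K^2 - 2 \geq 0$ are verified at $K = 2$, the rest of the argument is routine.
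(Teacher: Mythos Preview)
Your argument is correct. Both directions go through, and the ingredients you invoke---the comparability $b(z)/2 \le b(\tilde z) \le 2b(z)$ and the estimate \eqref{EQ2.1} from Lemma~\ref{lem : comparision of b(cdot) in p,s-phase}, together with the phase condition $a(z)\lambda_z^q \le K^2\lambda_z^p < b(z)\lambda_z^s$---are exactly the right ones.

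The paper's proof is organized differently. It treats only the inequality $K^{-2/p}\lambda_{\tilde z} \le \lambda_z$ explicitly, by a single contradiction chain: assuming $\lambda_z < K^{-2/p}\lambda_{\tilde z}$, one writes $\Lambda = \lambda_z^p + a(z)\lambda_z^q + b(z)\lambda_z^s$, replaces $a(z)$ by $a(\tilde z) + [a]_\alpha(200Kr_z)^\alpha$ and $b(z)$ by $2b(\tilde z)$, uses \eqref{EQ2.1} to absorb the H\"older remainder into $K^2\lambda_z^p$, and then pushes each term across to $\tilde z$ via $\lambda_z^r < K^{-2r/p}\lambda_{\tilde z}^r$, arriving at $\Lambda < \lambda_{\tilde z}^p + a(\tilde z)\lambda_{\tilde z}^q + b(\tilde z)\lambda_{\tilde z}^s = \Lambda$. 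This is slicker for that direction because it never isolates an algebraic inequality in $K$; the exponent bookkeeping $K^{1-2s/p} \le 1$ does all the work. By contrast, you sandwich $\Lambda$ between two bounds and reduce to checking $K^4 - 3K^2 - 2 > 0$ and $K^4 - 2K^2 - 2 \ge 0$ at $K \ge 2$. Your route is slightly longer but has the advantage of treating both directions on an equal footing and making the dependence on $K \ge 2$ completely explicit; the paper only asserts that one direction suffices and leaves the other to the reader. (Your closing remark that the inequalities are ``tight at $K=2$'' overstates things: the margins are $2$ and $6$, respectively, so there is in fact comfortable room.)
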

\begin{proof}
    From Lemma \ref{LEM2.1}, we get
\begin{align}\label{EQ2.2}
  \frac{b(z)}{2}\leq b(\tilde{z})\leq 2b(z).  
\end{align} 
Now, note that it is enough to prove $K^{-\frac{2}{p}}\lambda_{\tilde{z}} \leq \lambda_z.$ On contrary, let us assume $\lambda_z < K^{-\frac{2}{p}}\lambda_{\tilde{z}}.$ Then using \eqref{EQ2.1}, \eqref{EQ2.2} and the counter assumption, we obtain
\begin{align*}
    \Lambda&=\lambda_z^p+a(z)\lambda_z^q+b(z)\lambda_z^s\\
    &\leq \lambda^p_z+[a]_{\alpha}(200K r_z)^{\alpha}\lambda^q_z+a(\tilde{z})\lambda^q_z+b(z)\lambda^s_z\\
    &< \lambda^p_z+(K^2-1)\lambda^p_z+ a(\tilde{z})K^{-\frac{2q}{p}}\lambda^q_{\tilde{z}}+2b(\tilde{z})K^{-\frac{2s}{p}}\lambda^s_{\tilde{z}}\\
    &\leq K^2\lambda^p_z+ a(\tilde{z})K^{-\frac{2q}{p}}\lambda^q_{\tilde{z}}+ b(\tilde{z})K^{1-\frac{2s}{p}}\lambda^s_{\tilde{z}}\\
    &< \lambda_{\tilde{z}}^p+a(\tilde{z})\lambda_{\tilde{z}}^q+b(\tilde{z})\lambda_{\tilde{z}}^s=\Lambda,
\end{align*}
which is a contradiction. 
\end{proof}
\begin{lemma}   \label{lem : relation of lambda z and lambda tilde z in p,q-phase}
    Let $z, \tilde{z} \in E(\Lambda)^c$. Assume $K^2 \lambda^p_z < a(z)\lambda^q_z$ and $K^2\lambda^p_z\geq b(z)\lambda^s_z,$ and $\tilde{z} \in 200K Q_{r_z}(z).$  Then $K^{-\frac{2}{p}}\lambda_{\tilde{z}} \leq \lambda_z \leq K^{\frac{2}{p}}\lambda_{\tilde{z}}$. 
\end{lemma}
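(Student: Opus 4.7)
The plan is to mirror the proof of the previous lemma (Lemma \ref{lem : relation of lambda z and lambda tilde z in p,s-phase}), interchanging the roles of the pair $(a,q)$ and $(b,s)$: since we are now in the $(p,q)$-phase rather than the $(p,s)$-phase, it is the $b$-term that will be absorbed as a lower-order perturbation, while $a(\tilde z)\lambda_z^q$ becomes the dominant nonlinear contribution that must be compared with $a(\tilde z)\lambda_{\tilde z}^q$. Concretely, Lemma \ref{lem : comparision of a(cdot) in p,q-phase} furnishes the two inputs I will use: the pointwise comparison $\tfrac{a(z)}{2}\le a(\tilde z)\le 2a(z)$ on $200K\,Q_{r_z}(z)$, together with the smallness bound $[b]_\beta(800Kr_z)^\beta\lambda_z^s\le (K^2-1)\lambda_z^p$ from \eqref{Equation2.14}. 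Following the template of the previous lemma, I would prove the lower bound $K^{-2/p}\lambda_{\tilde z}\le\lambda_z$ in detail; the upper bound $\lambda_z\le K^{2/p}\lambda_{\tilde z}$ is obtained by an analogous contradiction argument.

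For the lower bound, I would suppose by contradiction that $\lambda_z<K^{-2/p}\lambda_{\tilde z}$. Starting from $\Lambda=\lambda_z^p+a(z)\lambda_z^q+b(z)\lambda_z^s$, the H\"older continuity of $b$ gives $b(z)\le b(\tilde z)+[b]_\beta(200Kr_z)^\beta$, so that
$$
b(z)\lambda_z^s\;\le\;b(\tilde z)\lambda_z^s+[b]_\beta(200Kr_z)^\beta\lambda_z^s\;\le\;b(\tilde z)\lambda_z^s+(K^2-1)\lambda_z^p
$$
by \eqref{Equation2.14}. Combined with $a(z)\le 2a(\tilde z)$, this yields
$$
\Lambda\;\le\;K^2\lambda_z^p+2a(\tilde z)\lambda_z^q+b(\tilde z)\lambda_z^s.
$$
Substituting the contradiction assumption $\lambda_z<K^{-2/p}\lambda_{\tilde z}$ and using $2\le K$ then gives
$$
\Lambda\;<\;\lambda_{\tilde z}^p+K^{1-2q/p}a(\tilde z)\lambda_{\tilde z}^q+K^{-2s/p}b(\tilde z)\lambda_{\tilde z}^s\;<\;\lambda_{\tilde z}^p+a(\tilde z)\lambda_{\tilde z}^q+b(\tilde z)\lambda_{\tilde z}^s\;=\;\Lambda,
$$
the last strict inequality relying on $K^{1-2q/p}<1$ and $K^{-2s/p}<1$, which hold since $2\le p<q<s$ forces $2q/p>1$ and $2s/p>1$. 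This contradiction closes the argument.

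The only delicate point is the quantitative $K$-bookkeeping at the last step: one has to absorb the prefactor $2$ from the pointwise comparison of $a(\cdot)$ and the prefactor $K^2$ from \eqref{Equation2.14} into powers of $K$, and verify that the remaining exponents stay strictly negative. Apart from this routine tracking, the proof is a direct transcription of the one given for Lemma \ref{lem : relation of lambda z and lambda tilde z in p,s-phase}, so I do not expect any substantive obstacle beyond carefully matching the phase assumption $K^2\lambda_z^p<a(z)\lambda_z^q$, $K^2\lambda_z^p\ge b(z)\lambda_z^s$ to the correct auxiliary lemma.
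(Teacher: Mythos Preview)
Your proposal is correct and follows essentially the same approach as the paper: the paper also invokes Lemma~\ref{lem : comparision of a(cdot) in p,q-phase} for both the comparability $\tfrac{a(z)}{2}\le a(\tilde z)\le 2a(z)$ and the smallness bound \eqref{Equation2.14}, then argues the lower bound $K^{-2/p}\lambda_{\tilde z}\le\lambda_z$ by the identical contradiction chain $\Lambda\le K^2\lambda_z^p+2a(\tilde z)\lambda_z^q+b(\tilde z)\lambda_z^s<\lambda_{\tilde z}^p+a(\tilde z)\lambda_{\tilde z}^q+b(\tilde z)\lambda_{\tilde z}^s=\Lambda$, with the same $K$-bookkeeping you describe; the upper bound is likewise left as analogous.
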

\begin{proof}
From Lemma \ref{lem : comparision of a(cdot) in p,q-phase}, we have
\begin{align}\label{Equation2.15}
    \frac{a(z)}{2}\leq a(\tilde{z})\leq a(z)
\end{align}
for any $\tilde{z}\in 200KQ_{r_z}(z).$
We will show $K^{-\frac{2}{p}}\leq \lambda_{\tilde{z}}.$ On contrary, let us assume $K^{-\frac{2}{p}}> \lambda_{\tilde{z}}.$ Then using \eqref{Equation2.14} and \eqref{Equation2.15}, we get
\begin{align*}
    \Lambda&=\lambda_z^p+a(z)\lambda_z^q+b(z)\lambda_z^s\\
    &\leq \lambda^p_z+a(z)\lambda^q_z+[b]_{\beta}(200K r_z)^{\beta}\lambda^s_z+b(\tilde{z})\lambda^s_z\\
    &< \lambda^p_z+(K^2-1)\lambda^p_z+ 2a(\tilde{z})K^{-\frac{2q}{p}}\lambda^q_{\tilde{z}}+b(\tilde{z})K^{-\frac{2s}{p}}\lambda^s_{\tilde{z}}\\
    &\leq K^2\lambda^p_z+ a(\tilde{z})K^{1-\frac{2q}{p}}\lambda^q_{\tilde{z}}+ b(\tilde{z})K^{-\frac{2s}{p}}\lambda^s_{\tilde{z}}\\
    &\leq \lambda_{\tilde{z}}^p+a(\tilde{z})\lambda_{\tilde{z}}^q+b(\tilde{z})\lambda_{\tilde{z}}^s=\Lambda,
\end{align*}
which gives a contradiction. This completes the proof.
\end{proof}}
\begin{lemma}   \label{lem : relation of lambda z and lambda tilde z in p,q,s-phase}
Let $z, \tilde{z} \in E(\Lambda)^c$. Assume $\frac{a(z)}{2}\leq a(\tilde{z})\leq 2a(z)$  and $\frac{b(z)}{2}\leq b(\tilde{z})\leq 2b(z).$ Then $2^{-\frac{1}{p}}\lambda_{\tilde{z}} \leq \lambda_z \leq 2^{\frac{1}{p}}\lambda_{\tilde{z}}.$ Moreover, the above inequality holds provided that $K^2 \lambda^p_z< a(z)\lambda^q_z$ and $K^2\lambda^p_z <b(z)\lambda^s_z,$ and $\tilde{z} \in 200K Q_{r_z}(z).$    
\end{lemma}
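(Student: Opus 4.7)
The plan is a short contradiction argument in the spirit of Lemmas \ref{lem : relation of lambda z and lambda tilde z in p,s-phase} and \ref{lem : relation of lambda z and lambda tilde z in p,q-phase}, but now handling both modulating coefficients simultaneously. First I would focus on the main inequality $2^{-1/p}\lambda_{\tilde{z}} \leq \lambda_z \leq 2^{1/p}\lambda_{\tilde{z}}$ under the hypothesis $\frac{a(z)}{2} \leq a(\tilde{z}) \leq 2a(z)$ and $\frac{b(z)}{2} \leq b(\tilde{z}) \leq 2b(z)$, exploiting the symmetry of these hypotheses in $z$ and $\tilde{z}$ so that it suffices to establish only the lower bound.

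To prove $2^{-1/p}\lambda_{\tilde{z}} \leq \lambda_z$, I would suppose toward contradiction that $\lambda_z < 2^{-1/p}\lambda_{\tilde{z}}$. Then $\lambda_z^p < \frac{1}{2}\lambda_{\tilde{z}}^p$, and since $2 \leq p < q < s$, also $\lambda_z^q < 2^{-q/p}\lambda_{\tilde{z}}^q \leq \frac{1}{2}\lambda_{\tilde{z}}^q$ and $\lambda_z^s < 2^{-s/p}\lambda_{\tilde{z}}^s \leq \frac{1}{2}\lambda_{\tilde{z}}^s$. Combining these strict bounds with $a(z) \leq 2a(\tilde{z})$ and $b(z) \leq 2b(\tilde{z})$ yields
\begin{align*}
\Lambda = \lambda_z^p + a(z)\lambda_z^q + b(z)\lambda_z^s < \lambda_{\tilde{z}}^p + a(\tilde{z})\lambda_{\tilde{z}}^q + b(\tilde{z})\lambda_{\tilde{z}}^s = \Lambda,
\end{align*}
a contradiction. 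Swapping the roles of $z$ and $\tilde{z}$, which is permitted since the hypotheses are symmetric in these two points, delivers the upper bound $\lambda_z \leq 2^{1/p}\lambda_{\tilde{z}}$.

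For the \emph{Moreover} part, under the stronger assumptions $K^2\lambda_z^p < a(z)\lambda_z^q$, $K^2\lambda_z^p < b(z)\lambda_z^s$, and $\tilde{z} \in 200KQ_{r_z}(z)$, I would invoke Lemma \ref{LEM2.1} to recover precisely the Hölder-type bounds $\frac{a(z)}{2} \leq a(\tilde{z}) \leq 2a(z)$ and $\frac{b(z)}{2} \leq b(\tilde{z}) \leq 2b(z)$ on the cylinder $200KQ_{r_z}(z)$, at which point the first part applies verbatim.

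I do not anticipate a substantive obstacle: the only subtlety is ensuring that the exponent monotonicity $p < q < s$ combines in the correct direction with $2^{-1/p} < 1$ to deliver a strict inequality on each of the three summands, which is exactly the mechanism already used successfully in Lemmas \ref{lem : relation of lambda z and lambda tilde z in p,s-phase} and \ref{lem : relation of lambda z and lambda tilde z in p,q-phase}; the $(p,q,s)$-phase simply uses all three terms simultaneously rather than only two of them.
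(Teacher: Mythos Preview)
Your proposal is correct and follows essentially the same approach as the paper: a contradiction argument comparing $\Lambda = \lambda_z^p + a(z)\lambda_z^q + b(z)\lambda_z^s$ with $\Lambda = \lambda_{\tilde z}^p + a(\tilde z)\lambda_{\tilde z}^q + b(\tilde z)\lambda_{\tilde z}^s$ using the coefficient bounds and the exponent ordering $p<q<s$, then invoking Lemma~\ref{LEM2.1} for the ``Moreover'' part. The only cosmetic difference is that the paper argues the upper bound $\lambda_z \le 2^{1/p}\lambda_{\tilde z}$ directly (assuming $\lambda_z > 2^{1/p}\lambda_{\tilde z}$ and deriving $\Lambda > \Lambda$), whereas you prove the lower bound first and then appeal to symmetry; the two are entirely equivalent.
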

\begin{proof}
    It is enough to prove the first statement. The second statement of the lemma follows from Lemma \ref{LEM2.1}. We claim $\lambda_z\leq 2^{1/p}\lambda_{\tilde{z}}.$ In contrast, assume $\lambda_z > 2^{1/p}\lambda_{\tilde{z}}.$ Using the hypothesis and the counter assumption, we get
    \begin{align*}
        \Lambda=\lambda_z^p+a(z)\lambda_z^q+b(z)\lambda_z^s&> 2 \lambda^p_{\tilde{z}}+2^{q/p-1}a(\tilde{z})\lambda^q_{\tilde{z}}+2^{s/p-1}b(\tilde{z})\lambda^s_{\tilde{z}}\\
        &\geq \lambda_{\tilde{z}}^p+a(\tilde{z})\lambda_{\tilde{z}}^q+b(\tilde{z})\lambda_{\tilde{z}}^s=\Lambda,
    \end{align*}
    which is a contradiction.
\end{proof}
\begin{lemma}
    Let $z \in E(\Lambda)^c$. Assume that $K^2 \lambda^p_z\geq a(z)\lambda^q_z$ and $K^2 \lambda^p_z \geq b(z)\lambda^s_z.$ Then $[a]_{\alpha}(50K r_z)^{\alpha}\lambda^q_z\leq (K^2-1)\lambda^p_z$ and $[b]_{\beta}(50K r_z)^{\beta}\lambda^s_z\leq (K^2-1)\lambda^p_z$.
\end{lemma}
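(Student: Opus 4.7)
The plan is to follow the template of Lemma \ref{lem : comparision of b(cdot) in p,s-phase} and Lemma \ref{lem : comparision of a(cdot) in p,q-phase}, but working with the pure $p$-phase intrinsic cylinder $Q_{r_z,\lambda_z}(z)$ whose time scale is $\lambda_z^{2-p}r_z^2$. The hypotheses place us in the case where $U_z^\Lambda = Q_{r_z,\lambda_z}(z)$, and by \eqref{eq : definition of r_z} this cylinder is contained in $E(\Lambda)^c$. Hence at every point $w$ of it one has $(M(f^d+(af^q)^{d/p}+(bf^s)^{d/p}))^{p/d}(w) > \Lambda \geq \lambda_z^p$.

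First I would average this pointwise lower bound over $Q_{r_z,\lambda_z}(z)$, use $|Q_{r_z,\lambda_z}(z)| = 2|B_1|r_z^{n+2}\lambda_z^{2-p}$ to extract a factor of $\lambda_z^2$ on the left, and enlarge the domain of integration to $\mr^{n+1}$ on the right. After raising to the power $\frac{\alpha}{n+2}$ and invoking the definition of $K$ in \eqref{defn of K}, this gives
\[
r_z^{\alpha}\,\lambda_z^{\frac{2\alpha}{n+2}} \;\leq\; \frac{K}{800[a]_{\alpha}}.
\]
The analogous step with $\frac{\beta}{n+2}$ in place of $\frac{\alpha}{n+2}$ produces the same estimate with $\beta$ and $[b]_{\beta}$.

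Combining with $q \leq p+\frac{2\alpha}{n+2}$ (so that $\lambda_z^q \leq \lambda_z^p\,\lambda_z^{2\alpha/(n+2)}$ since $\lambda_z \geq 1$) and $\alpha\in(0,1]$, I would then estimate
\[
[a]_{\alpha}(50Kr_z)^{\alpha}\lambda_z^q \;\leq\; 50^{\alpha}\,\frac{K^{\alpha+1}}{800}\,\lambda_z^p \;\leq\; \frac{K^2}{16}\,\lambda_z^p \;\leq\; (K^2-1)\,\lambda_z^p,
\]
the last inequality using $K\geq 2$. The bound for $b$ follows identically, replacing $\alpha,[a]_{\alpha}$ by $\beta,[b]_{\beta}$ and invoking \eqref{eq : condition of n,p,s,beta}.

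The argument is essentially bookkeeping and I do not expect a genuine obstacle; the only real design decision is to use $Q_{r_z,\lambda_z}(z)$ with time length $\lambda_z^{2-p}r_z^2$ rather than one of the mixed-phase cylinders, so that the left-hand side of the averaged inequality naturally produces the factor $\lambda_z^2$ that pairs with the exponent $\frac{2\alpha}{n+2}$ (respectively $\frac{2\beta}{n+2}$) appearing in the sharp conditions \eqref{eq : condition of n,p,q,alpha} and \eqref{eq : condition of n,p,s,beta}. The only minor care point is to track numerical constants tightly enough so that $(K^2-1)\lambda_z^p$ (rather than some looser $CK^2\lambda_z^p$) appears on the right, which is why the radius factor of $50K$ here — as opposed to $800K$ or $200K$ in the previous two lemmas — is chosen.
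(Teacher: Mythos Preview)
Your proof is correct and follows essentially the same route as the paper, which simply remarks that the argument proceeds analogously to Lemma~\ref{LEM2.1} (and, in spirit, to the direct computations in Lemmas~\ref{lem : comparision of b(cdot) in p,s-phase} and~\ref{lem : comparision of a(cdot) in p,q-phase}). The only difference is cosmetic: you use the weaker bound $\tfrac{K}{800[a]_\alpha}$ in place of the paper's $\tfrac{K-1}{800[a]_\alpha}$, which still suffices for the final inequality since $\tfrac{K^2}{16}\le K^2-1$ when $K\ge 2$.
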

\begin{proof}
    The proof can be completed analogously to the argument presented in the proof of Lemma \ref{LEM2.1}.
\end{proof}
\begin{lemma}   \label{lem : relation of lambda z and lambda tilde z in p-phase}
Let $z \in E(\Lambda)^c$. Assume that $K^2 \lambda^p_z\geq a(z)\lambda^q_z$ and $K^2 \lambda^p_z \geq b(z)\lambda^s_z.$ If $\tilde{z} \in 50KQ_{r_z}(z),$ then $\lambda_z\leq K^{2/p}\lambda_{\tilde{z}}.$    
\end{lemma}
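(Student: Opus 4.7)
The approach is by contradiction, in parallel with the proofs of Lemmas~\ref{lem : relation of lambda z and lambda tilde z in p,s-phase} and \ref{lem : relation of lambda z and lambda tilde z in p,q-phase}. Suppose, aiming for a contradiction, that $\lambda_z>K^{2/p}\lambda_{\tilde z}$. Raising to the powers $p$, $q$, and $s$ gives $\lambda_{\tilde z}^p<K^{-2}\lambda_z^p$, $\lambda_{\tilde z}^q<K^{-2q/p}\lambda_z^q$, and $\lambda_{\tilde z}^s<K^{-2s/p}\lambda_z^s$, with $K^{-2q/p},K^{-2s/p}\leq K^{-2}$ thanks to $p\leq q\leq s$.

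Since $\tilde z\in 50KQ_{r_z}(z)$, the parabolic H\"older distance between $z$ and $\tilde z$ is bounded by $100Kr_z$, so $a(\tilde z)\leq a(z)+[a]_\alpha(100Kr_z)^\alpha$ and $b(\tilde z)\leq b(z)+[b]_\beta(100Kr_z)^\beta$. Combining with the preceding lemma and $2^\alpha,2^\beta\leq 2$, one obtains the quantitative estimates $[a]_\alpha(100Kr_z)^\alpha\lambda_z^q\leq 2(K^2-1)\lambda_z^p$ and $[b]_\beta(100Kr_z)^\beta\lambda_z^s\leq 2(K^2-1)\lambda_z^p$.

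Equipped with these, the plan is to expand $\Lambda=\lambda_z^p+a(z)\lambda_z^q+b(z)\lambda_z^s$ and to convert each summand into a lower bound involving $\lambda_{\tilde z}$. The counter assumption yields $\lambda_z^p>K^2\lambda_{\tilde z}^p$ directly. For $a(z)\lambda_z^q$, I would use $a(z)\geq a(\tilde z)-[a]_\alpha(100Kr_z)^\alpha$ together with $\lambda_z^q>K^{2q/p}\lambda_{\tilde z}^q$, discarding the H\"older defect by the estimate above; an analogous manipulation applies to $b(z)\lambda_z^s$. The $p$-phase hypotheses $a(z)\lambda_z^q\leq K^2\lambda_z^p$ and $b(z)\lambda_z^s\leq K^2\lambda_z^p$ provide the quantitative control needed to collapse the resulting three contributions to the form $K^2\Lambda$ minus a pair of H\"older defects, after which one aims for the strict bound $\Lambda>\Lambda$.

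The main obstacle, absent in Lemmas~\ref{lem : relation of lambda z and lambda tilde z in p,s-phase} and \ref{lem : relation of lambda z and lambda tilde z in p,q-phase}, is that in the $p$-phase neither $a$ nor $b$ enjoys a multiplicative comparison $\tfrac12 a(z)\leq a(\tilde z)\leq 2a(z)$ (or its $b$-analogue) supplied by Lemmas~\ref{LEM2.1}--\ref{lem : comparision of a(cdot) in p,q-phase}; both coefficients must be handled purely by the quantitative H\"older estimates of the preceding lemma, so the defect $2(K^2-1)\lambda_z^p$ is paid twice. The delicate bookkeeping is therefore to verify that the factors $K^{2q/p}-1$ and $K^{2s/p}-1$, which are strictly larger than $K^2-1$ because $p<q<s$ by \eqref{eq : condition of n,p,q,alpha}--\eqref{eq : condition of n,p,s,beta}, together with $K\geq 2$, are large enough to absorb both H\"older defects and close the strict inequality $\Lambda>\Lambda$.
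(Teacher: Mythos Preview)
Your overall strategy---contradiction assuming $\lambda_{\tilde z}<K^{-2/p}\lambda_z$ and mimicking Lemmas~\ref{lem : relation of lambda z and lambda tilde z in p,s-phase}--\ref{lem : relation of lambda z and lambda tilde z in p,q-phase}---is exactly what the paper intends, and you have correctly isolated the new obstruction: in the $p$-phase \emph{both} coefficients must be handled by H\"older continuity, producing two defect terms instead of one. However, the mechanism you propose to absorb them does not work.

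Your plan hinges on the extra margin furnished by $K^{2q/p}-1>K^2-1$ and $K^{2s/p}-1>K^2-1$; these factors multiply $a(\tilde z)\lambda_{\tilde z}^q$ and $b(\tilde z)\lambda_{\tilde z}^s$, which are not bounded below. The hypotheses \eqref{eq : condition of n,p,q,alpha}--\eqref{eq : condition of n,p,s,beta} allow $q$ and $s$ arbitrarily close to $p$, and in that regime $K^{2q/p}-1\to K^2-1$, so the ``extra'' gain vanishes. Concretely, carrying your inequality through yields
\[
\Lambda > K^2\Lambda-4(K^2-1)\lambda_z^p,
\]
which only forces $4\lambda_z^p>\Lambda$; since $\Lambda$ can equal $\lambda_z^p$ (e.g.\ when $a(z)=b(z)=0$), no contradiction follows. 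The dual computation starting from the $\tilde z$-expansion fails for the same reason: two defects of size $(K^2-1)\lambda_{\tilde z}^p$ give $(2K^2-1)\lambda_{\tilde z}^p$, and $(2K^2-1)K^{-2}>1$.

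The fix is not in the exponents but in the \emph{constant} of the preceding lemma. Its proof (following Lemma~\ref{LEM2.1}) actually yields the much sharper estimate $[a]_\alpha r_z^\alpha\lambda_z^{2\alpha/(n+2)}<\tfrac{K-2}{800}$, whence $[a]_\alpha(100Kr_z)^\alpha\lambda_z^q\le \tfrac{K(K-2)}{8}\lambda_z^p$, and similarly for $b$. Transferring to $\lambda_{\tilde z}$ via $\lambda_z^{p-q}<\lambda_{\tilde z}^{p-q}$, the two defects together contribute at most $\tfrac{K(K-2)}{4}\lambda_{\tilde z}^p$, so
\[
\Lambda\le\Bigl(1+\tfrac{K(K-2)}{4}\Bigr)\lambda_{\tilde z}^p+a(z)\lambda_{\tilde z}^q+b(z)\lambda_{\tilde z}^s<K^2\lambda_{\tilde z}^p+a(z)\lambda_{\tilde z}^q+b(z)\lambda_{\tilde z}^s<\lambda_z^p+a(z)\lambda_z^q+b(z)\lambda_z^s=\Lambda,
\]
the penultimate step using $1+\tfrac{K(K-2)}{4}<K^2$ for $K\ge2$. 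The stated bound $(K^2-1)$ is calibrated for a single defect and is simply too coarse here.
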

\begin{proof}
    The proof can be completed from Lemma \ref{lem : relation of lambda z and lambda tilde z in p,s-phase} and Lemma \ref{lem : relation of lambda z and lambda tilde z in p,q-phase}.
\end{proof}
\subsection{Vitali covering and their properties}   \label{subsection : Vitali covering}
In this subsection, we want to choose a countable collection of intrinsic cylinders from $\{U_z^\Lambda\}_{z\in E(\Lambda)^c}$ such that 
$$
E(\Lambda)^c \subset \bigcup_{z\in E(\Lambda)^c} \frac{1}{K}U_z^\lambda\quad \text{and}\quad \frac{1}{6K^6}U_{z_1}^\Lambda \cap \frac{1}{6K^6}U_{z_2}^\Lambda=\emptyset \quad \text{for any }z_1,\, z_2\in E(\Lambda)^c.
$$
First, we claim that $\{r_z: z\in E(\Lambda)^c\}$ is uniformly bounded. Indeed, since, by \eqref{limit of Λ|E(Λ)^c|}, $0<|E(\Lambda)^c|<\infty$ and $\Lambda>1+\|a\|_{L^\infty (\mr^{n+1})}+\|b\|_{L^\infty(\mr^{n+1})}$, there exist $\lambda>1$ and $R>0$, which are in particular independent of $z\in E(\Lambda)^c$, such that 
\begin{equation}    \label{eq : def of lambda}
    \lambda^p + \|a\|_{L^\infty (\mr^{n+1})}\lambda^q+\|b\|_{L^\infty (\mr^{n+1})}\lambda^s=\Lambda
\end{equation}
and $|B_R\times (-\lambda^2\Lambda^{-1}R^2,\lambda^2\Lambda^{-1}R^2)|=|E(\Lambda)^c|$. It clear that $\lambda\leq \lambda_z\leq \Lambda$ for any $z\in E(\Lambda)^c$. Hence, if $r_z> R$ for some $z\in E(\Lambda)^c$, then 
$$
|E(\Lambda)^c|=|B_R\times (-\lambda^2\Lambda^{-1}R^2,\lambda^2\Lambda^{-1}R^2)|<|U_z^\Lambda|\leq |E(\Lambda)^c|.
$$
This is a contradiction, and we conclude $r_z\leq R$ for any $z\in E(\Lambda)^c$.

Let $\mathcal{F}=\left\{\frac{1}{6K^6}U_z^\Lambda\right\}_{z\in E(\Lambda)^c}$ and, for each $j\in\mathbb{N}$,
$$
\mathcal{F}_j=\left\{\frac{1}{6K^6}U_z^\Lambda\in \mathcal{F}:\frac{R}{2^j}<r_z\leq \frac{R}{2^{j-1}}\right\}.
$$
Since $r_z\leq R$ for any $z\in E(\Lambda)^c$, we obtain 
$$
\mathcal{F}=\bigcup_{j\in \mathbb{N}} \mathcal{F}_j.
$$
Then, in the same way as \cite[Subsection 3.3]{Wontae2023a}, we have a countable subcollection $\mathcal{G}$ of pairwise disjoint cylinders in $\mathcal{F}$. From now on, we show that the $6K^5$-times the cylinders in $\mathcal{G}$ covers $E(\Lambda)^c$. Fix $\frac{1}{6K^6}U_{\tilde{z}}^\Lambda\in \mathcal{F}$. Then there exists $i\in\mathbb{N}$ such that $\frac{1}{6K^6}U_{\tilde{z}}^\Lambda\in \mathcal{F}_i$ and by the construction of $\mathcal{G}$ in \cite{Wontae2023a} there exists a cylinder $\frac{1}{6K^6}U_z^\Lambda\in \cup_{j=1}^i \mathcal{G}_j$ such that 
\begin{equation}    \label{eq : 1/6KU_tilde z intersect 1/6KU_z}
\frac{1}{6K^6}U_{\tilde{z}}^\Lambda\cap \frac{1}{6K^6}U_z^\Lambda \neq \emptyset.
\end{equation}
Since $\frac{1}{6K^6}U_{\tilde{z}}^\Lambda\in \mathcal{F}_i$ and $\frac{1}{6K^6}U_z^\Lambda\in \mathcal{F}_j$ for some $j\in\{1,\cdots,i\}$, we obtain
\begin{equation}    \label{eq : relation of r_ tilde z and r_z}
r_{\tilde{z}}\leq 2r_z.
\end{equation}

We claim that 
\begin{equation}    \label{eq : covering in R n+1}
\frac{1}{6K^6} U_{\tilde{z}}^\Lambda\subset \frac{1}{K}U_z^\Lambda.
\end{equation}
Let $z=(x,t)$ and $\tilde{z}=(\tilde{x},\tilde{t})$. By \eqref{eq : 1/6KU_tilde z intersect 1/6KU_z}, \eqref{eq : relation of r_ tilde z and r_z} and the proof of the standard Vitali covering lemma, we obtain 
$$
\frac{1}{6K^6}B_{r_{\tilde{z}}}(y)\subset \frac{1}{K}B_{r_z}(x).
$$
Moreover, by \eqref{eq : relation of r_ tilde z and r_z} and the standard Vitali covering argument, we obtain that $Q_{r_{\tilde{z}}}({\tilde{z}})\subset 5Q_{r_z}(z)$ and hence, we conclude 
\begin{equation}    \label{eq : tilde z belong in 50KQ}
\tilde{z}\in 5Q_{r_z}(z)\subset 200K Q_{r_z}(z).
\end{equation}
Now, we prove the inclusion in \eqref{eq : covering in R n+1} in the time direction, by considering the $16$ cases depicted in Table \ref{tab : 16 Cases}. 
\begin{table}[t]
    \centering
    \begin{tabular}{|c|c c c c|}
        \hline
        \backslashbox{$U_{\tilde{z}}^\Lambda$}{$U_z^\Lambda$} & $Q_{r_z,\lambda_z}(z)$ & $Q_{r_z,\lambda_z}^q(z)$ & $Q_{r_z,\lambda_z}^s(z)$ & $Q_{r_z,\lambda_z}^{q,s}(z)$\\ 
        \hline
        $Q_{r_{\tilde{z}},\lambda_{\tilde{z}}}(\tilde{z})$ & (1-1) & (1-2) & (1-3) & (1-4) \\
        
        $Q_{r_{\tilde{z}},\lambda_{\tilde{z}}}^q(\tilde{z})$ & (2-1) & (2-2) & (2-3) & (2-4) \\
        
        $Q_{r_{\tilde{z}},\lambda_{\tilde{z}}}^s(\tilde{z})$ & (3-1) & (3-2) & (3-3) & (3-4) \\
        
        $Q_{r_{\tilde{z}},\lambda_{\tilde{z}}}^{q,s}(\tilde{z})$ & (4-1) & (4-2) & (4-3) & (4-4) \\
        \hline
    \end{tabular}
    \caption{The combinations of $U_{\tilde{z}}^\Lambda$ and $U_z^\Lambda$.}
    \label{tab : 16 Cases} 
\end{table}

\textit{Case $(1$-$1)$}. By \eqref{eq : 1/6KU_tilde z intersect 1/6KU_z} and \eqref{eq : relation of r_ tilde z and r_z}, for $\tau\in\frac{1}{6K^6}I_{r_{\tilde{z}},\lambda_{\tilde{z}}}(\tilde{t})$, we obtain 
$$
\begin{aligned}
    |\tau - t|&\leq |\tau-\tilde{t}|+|\tilde{t}-t|\\
    &\leq 2\lambda_{\tilde{z}}^{2-p}\left(\frac{r_{\tilde{z}}}{6K^6}\right)^2+\lambda_z^{2-p}\left(\frac{r_z}{6K^6}\right)^2\\
    &\leq (8\lambda_{\tilde{z}}^{2-p}+\lambda_z^{2-p})\left(\frac{r_z}{6K^6}\right)^2.
\end{aligned}
$$
Lemma \ref{lem : relation of lambda z and lambda tilde z in p-phase} and \eqref{eq : tilde z belong in 50KQ} imply
$$
\begin{aligned}
    |\tau - t|&\leq (8K^{\frac{2(p-2)}{p}}+1)\lambda_z^{2-p}\left(\frac{r_z}{6K^6}\right)^2\\
    &\leq (9K^2)\lambda_z^{2-p}\left(\frac{r_z}{6K^6}\right)^2\\
    &\leq \lambda_z^{2-p}\left({\frac{r_z}{K}}\right)^2,
\end{aligned}
$$
and hence $\tau\in \frac{1}{K}I_{r_z,\lambda_z}(t)$.

\textit{Cases $(2$-$1)$, $(3$-$1)$ and $(4$-$1)$.} Since $Q_{r_{\tilde{z}},\lambda_{\tilde{z}}}^q({\tilde{z}})\subset Q_{r_{\tilde{z}},\lambda_{\tilde{z}}}({\tilde{z}})$, $Q_{r_{\tilde{z}},\lambda_{\tilde{z}}}^s({\tilde{z}})\subset Q_{r_{\tilde{z}},\lambda_{\tilde{z}}}({\tilde{z}})$ and $Q_{r_{\tilde{z}},\lambda_{\tilde{z}}}^{q,s}({\tilde{z}})\subset Q_{r_{\tilde{z}},\lambda_{\tilde{z}}}({\tilde{z}})$, by the previous argument, we obtain the conclusion.

\textit{Case $(1$-$2)$} By \eqref{eq : 1/6KU_tilde z intersect 1/6KU_z} and \eqref{eq : relation of r_ tilde z and r_z}, for $\tau\in \frac{1}{6K^6}I_{r_{\tilde{z}},\lambda_{\tilde{z}}}(\tilde{t})$, we obtain 
$$
\begin{aligned}
    |\tau-t|&\leq |\tau-\tilde{t}|+|\tilde{t}-t|\\
    &\leq 2\lambda_{\tilde{z}}^{2-p}\left(\frac{r_{\tilde{z}}}{6K^6}\right)^2+\frac{\lambda^2_z}{g_q(z,\lambda_z)}\left(\frac{r_z}{6K^6}\right)^2\\
    &\leq \left(8\lambda_{\tilde{z}}^{2-p}+\frac{\lambda^2_z}{g_q(z,\lambda_z)}\right)\left(\frac{r_z}{6K^6}\right)^2.
\end{aligned} 
$$
From $a(\tilde{z})\lambda_{\tilde{z}}^q\leq K^2\lambda_{\tilde{z}}^p$, we have 
$$
\lambda_{\tilde{z}}^{2-p}=2\frac{\lambda_{\tilde{z}}^2}{\lambda_{\tilde{z}}^p+\lambda_{\tilde{z}}^p}\leq 2K^2\frac{\lambda_{\tilde{z}}^2}{g_q(\tilde{z},\lambda_{\tilde{z}})}.
$$
Since \eqref{eq : condition of n,p,q,alpha} implies $\frac{q}{p}\leq 2$, we obtain from Lemma \ref{lem : comparision of a(cdot) in p,q-phase}, Lemma \ref{lem : relation of lambda z and lambda tilde z in p,q-phase}, \eqref{eq : condition of n,p,q,alpha} and \eqref{eq : tilde z belong in 50KQ} that
\begin{equation}    \label{eq : equation in Case (1-2)}
    \frac{\lambda_{\tilde{z}}^2}{g_q(\tilde{z},\lambda_{\tilde{z}})}\leq \frac{K^6\lambda_z^2}{\lambda_z^p+a(\tilde{z})\lambda_z^q}\leq 2K^6\frac{\lambda_z^2}{g(z,\lambda_z)}.
\end{equation}
Thus, we conclude
$$
|\tau-t|\leq \left(32K^8+1\right)\frac{\lambda^2_z}{g_q(z,\lambda_z)}\left(\frac{r_z}{6K^6}\right)^2\leq \frac{\lambda^2_z}{g_q(z,\lambda_z)}\left(\frac{r_z}{K}\right)^2
$$
and hence $\tau \in \frac{1}{K}I^q_{r_z,\lambda_z}(t)$.

\textit{Cases $(1$-$3)$, $(1$-$4)$, $(2$-$3)$, $(3$-$2)$, $(2$-$4)$ and $(3$-$4)$}. By Lemmas \ref{lem : comparision of a(cdot) in p,q-phase}, \ref{lem : comparision of b(cdot) in p,s-phase}, \ref{lem : relation of lambda z and lambda tilde z in p,s-phase}, \ref{lem : relation of lambda z and lambda tilde z in p,q-phase} and \ref{lem : relation of lambda z and lambda tilde z in p,q,s-phase}, we obtain from the above argument the conclusion.

\textit{Case $(2$-$2)$} By \eqref{eq : 1/6KU_tilde z intersect 1/6KU_z} and \eqref{eq : relation of r_ tilde z and r_z}, for $\tau \in \frac{1}{6K^6}$, we obtain
$$
\begin{aligned}
    |\tau-t|&\leq |\tau-\tilde{t}|+|\tilde{t}-t|\\
    &\leq 2\frac{\lambda_{\tilde{z}}^2}{g_q(\tilde{z},\lambda_{\tilde{z}})}\left(\frac{r_{\tilde{z}}}{6K^6}\right)^2+\frac{\lambda^2_z}{g_q(z,\lambda_z)}\left(\frac{r_z}{6K^6}\right)^2\\
    &\leq \left(\frac{8\lambda_{\tilde{z}}^2}{g_q(\tilde{z},\lambda_{\tilde{z}})}+\frac{\lambda^2_z}{g_q(z,\lambda_z)}\right)\left(\frac{r_z}{6K^6}\right)^2.
\end{aligned} 
$$
Then \eqref{eq : equation in Case (1-2)} implies 
$$
|\tau-t|\leq \left(16K^8+1\right)\frac{\lambda^2_z}{g_q(z,\lambda_z)}\left(\frac{r_z}{6K^6}\right)^2\leq \frac{\lambda_z^2}{g_q(z,\lambda_z)}\left(\frac{r_z}{K}\right)^2
$$
and hence $\tau \in \frac{1}{K}I^q_{r_z,\lambda_z}(t)$.

\textit{Cases $(3$-$3)$ and $(4$-$4)$.}
In the same way as the above case, we easily obtain the conclusion.

\textit{Cases $(4$-$2)$ and $(4$-$3)$.}
We obtain the conclusion for Cases (2-2) and (2-3) in the same way for Cases (2-1), (3-1) and (4-1).

Thus, we have established a countable covering family $\{\frac{1}{K}U_i\}_{i\in\mathbb{N}}$ of intrinsic cylinders defined as in \eqref{EQQ2.4} and with pairwise disjoint $\frac{1}{6K^6} U_i$. Now, we prove some properties of the collection $\{U_i\}_{i\in\mathbb{N}}$ that will be summarized in Lemma \ref{LEM2.9} at the end.
\begin{lemma}   \label{lem : property of distance between U_i and E(Lambda)}
    We have $3r_i\leq d_i(U_i,E(\Lambda))\leq 4r_i$ for every $i\in \mathbb{N}$.
\end{lemma}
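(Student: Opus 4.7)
The plan is to observe that each $U_i$ is precisely the open $d_i$-ball of radius $r_i$ centered at $z_i$, and then combine this with the definition $4r_i = d_i(z_i, E(\Lambda))$ and the triangle inequality for $d_i$. Note that in each of the four cases defining $d_z$, the distance is a max of $|x_1-x_2|$ with a scaled $\sqrt{|t_1-t_2|}$, and hence $d_i$ is a genuine metric on $\mathbb{R}^{n+1}$.

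For the upper bound, since $z_i \in U_i$, one immediately obtains
$$d_i(U_i, E(\Lambda)) = \inf_{z\in U_i,\, w\in E(\Lambda)} d_i(z,w) \leq \inf_{w\in E(\Lambda)} d_i(z_i,w) = d_i(z_i, E(\Lambda)) = 4 r_i,$$
by the definition \eqref{eq : definition of r_z} of $r_i = r_{z_i}$.

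For the lower bound, I would first verify case by case that $z \in U_i$ implies $d_i(z_i,z) < r_i$. For instance, if $K^2\lambda_i^p \geq a(z_i)\lambda_i^q$ and $K^2\lambda_i^p \geq b(z_i)\lambda_i^s$, then $U_i = B_{r_i}(x_i)\times (t_i-\lambda_i^{2-p}r_i^2, t_i+\lambda_i^{2-p}r_i^2)$ and $d_i(z_1,z_2) = \max\{|x_1-x_2|, \sqrt{\lambda_i^{p-2}|t_1-t_2|}\}$, so both the space and the time component of $d_i(z_i,z)$ are strictly less than $r_i$. The other three cases (involving $g_q$, $g_s$, $g_{q,s}$) follow identically since the time-scaling in the cylinder matches the time-scaling in the metric. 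Then, for any $z\in U_i$ and $w\in E(\Lambda)$, the triangle inequality gives
$$d_i(z,w) \geq d_i(z_i,w) - d_i(z_i,z) \geq 4r_i - r_i = 3r_i,$$
and taking the infimum over $z \in U_i$ and $w\in E(\Lambda)$ yields $d_i(U_i, E(\Lambda)) \geq 3r_i$.

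There is no real obstacle here; the argument is essentially a formal consequence of the fact that $U_i$ is the $d_i$-ball of radius $r_i$ around $z_i$ and that $d_i(z_i, E(\Lambda)) = 4r_i$ by construction. The only minor point worth checking is the triangle inequality for $d_i$, which reduces to the triangle inequality for $|\cdot|$ in $\mathbb{R}^n$ and the elementary $\sqrt{a+b} \leq \sqrt{a}+\sqrt{b}$ applied to the time component, both of which pass through the $\max$.
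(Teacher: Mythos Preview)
Your proof is correct and follows essentially the same approach as the paper: both use $z_i\in U_i$ together with $d_i(z_i,E(\Lambda))=4r_i$ for the upper bound, and the triangle inequality with $d_i(z,z_i)\le r_i$ for $z\in U_i$ for the lower bound. Your extra remarks verifying that $U_i$ is the $d_i$-ball of radius $r_i$ and that $d_i$ satisfies the triangle inequality are implicit in the paper but are a welcome clarification.
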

\begin{proof}
    Since, by the definition of $r_i$ in \eqref{eq : definition of r_z},
    $$
    d_i (U_i,E(\Lambda))\leq d_i(z_i,E(\Lambda))=4r_i,
    $$
    the second inequality is satisfied. Moreover, from the triangle inequality we obtain that, for any $z\in U_i$,
    $$
    d_i(z,E(\Lambda))\geq d_i(z_i,E(\Lambda))-d_i(z,z_i)\geq 4r_i-r_i=3r_i.
    $$
    Since $z\in U_i$ is arbitrary, we have $d_i(U_i,E(\Lambda))\geq 3r_i$.
\end{proof}
 Next, we prove the property \ref{v} in Lemma \ref{LEM2.9}.
\begin{lemma}   \label{lem : comparision of r_j and r_i}
    We have $(12K^2)^{-1}r_j\leq r_i\leq 12K^2r_j$ for every $i\in \mathbb{N}$ and $j\in\mathcal{I}$. Moreover, if $U_j=Q^{q,s}_j$, then $r_i\leq 12 r_j$   
\end{lemma}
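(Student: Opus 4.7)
The plan is to argue by contradiction, using the characterization $4r_i = d_i(z_i, E(\Lambda))$ from \eqref{eq : definition of r_z}. Since the hypothesis $j\in\mathcal{I}$ and the conclusion $(12K^2)^{-1} r_j \leq r_i \leq 12K^2 r_j$ are both symmetric in $(i,j)$, it suffices to prove $r_i \leq 12K^2 r_j$; the reverse inequality then follows by swapping the roles of $i$ and $j$. Assuming for contradiction $r_i > 12K^2 r_j$, pick $\tilde z = (\tilde x, \tilde t) \in \frac{2}{K}U_i\cap \frac{2}{K}U_j$ and, since $E(\Lambda)$ is closed, a closest point $w = (x_w, t_w)\in E(\Lambda)$ to $z_j$ in the $d_j$-metric, so that $d_j(z_j, w) = 4r_j$. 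The goal is to derive $d_i(z_i, w) < 4r_i$, contradicting the definition of $r_i$. The spatial estimate
\begin{equation*}
|x_i - x_w|\leq |x_i - \tilde x| + |\tilde x - x_j| + |x_j - x_w|\leq \tfrac{2}{K}r_i + \tfrac{2}{K}r_j + 4r_j,
\end{equation*}
combined with $r_j < r_i/(12K^2)$ and $K\geq 2$, yields $|x_i - x_w| < 4r_i$. Moreover, the same triangle inequality gives $|x_i - x_j|\leq \tfrac{2}{K}(r_i + r_j)$, and a parallel elementary bound on $|t_i - t_j|$ (each time piece is at most $r^2$ times a factor $\leq 1$) ensures $z_j\in 200K\,Q_{r_i}(z_i)$, so the relevant $\lambda$-comparison lemma among Lemmas \ref{lem : relation of lambda z and lambda tilde z in p,s-phase}--\ref{lem : relation of lambda z and lambda tilde z in p-phase} is applicable according to the phase of $U_i$.

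The time component of $d_i(z_i, w)$ is analyzed through the sixteen-case breakdown in Table \ref{tab : 16 Cases}. In every case one combines the triangle inequality
\begin{equation*}
|t_i - t_w| \leq |t_i - \tilde t| + |\tilde t - t_j| + |t_j - t_w|,
\end{equation*}
with each piece bounded by the defining half-width of $U_i$, $U_j$, or a $d_j$-ball about $z_j$, and then applies the appropriate $\lambda$-comparison to convert $d_j$-scale quantities into $d_i$-scale quantities. As a representative calculation, for case $(1,1)$ with $U_i = Q_i$ and $U_j = Q_j$, Lemma \ref{lem : relation of lambda z and lambda tilde z in p-phase} gives $\lambda_i^{p-2}\lambda_j^{2-p}\leq K^2$, so
\begin{equation*}
\lambda_i^{p-2}|t_i - t_w|\leq \left(\tfrac{2}{K}r_i\right)^2 + K^2\Bigl[\left(\tfrac{2}{K}r_j\right)^2 + (4r_j)^2\Bigr] < (4r_i)^2
\end{equation*}
whenever $r_j < r_i/(12K^2)$ and $K\geq 2$; combined with the spatial bound this yields $d_i(z_i,w) < 4r_i$. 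The remaining fifteen cases are handled by the same template, invoking Lemmas \ref{LEM2.1}, \ref{lem : comparision of b(cdot) in p,s-phase}, \ref{lem : comparision of a(cdot) in p,q-phase}, \ref{lem : relation of lambda z and lambda tilde z in p,s-phase}, \ref{lem : relation of lambda z and lambda tilde z in p,q-phase}, \ref{lem : relation of lambda z and lambda tilde z in p,q,s-phase} to compare the time-scaling factors $\lambda^{2-p}$, $\lambda^2/g_q$, $\lambda^2/g_s$, $\lambda^2/g_{q,s}$ at $z_i$ and $z_j$.

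For the moreover statement, assume $U_j = Q_j^{q,s}$, so $K^2\lambda_j^p < a(z_j)\lambda_j^q$ and $K^2\lambda_j^p < b(z_j)\lambda_j^s$, and suppose again for contradiction $r_i > 12 r_j$. The bound $r_i\leq 12K^2 r_j$ from the main part (applied with roles swapped) still gives $z_i\in 200K\,Q_{r_j}(z_j)$, so Lemma \ref{LEM2.1} provides $\tfrac{a(z_j)}{2}\leq a(z_i)\leq 2a(z_j)$ and $\tfrac{b(z_j)}{2}\leq b(z_i)\leq 2b(z_j)$, and then Lemma \ref{lem : relation of lambda z and lambda tilde z in p,q,s-phase} yields the sharp ratio $\lambda_j\leq 2^{1/p}\lambda_i$. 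Rerunning the time estimate with this stronger comparison eliminates the factor $K^2$ throughout, so the weaker assumption $r_j < r_i/12$ already produces the contradiction $d_i(z_i, w) < 4r_i$. The main obstacle is this uniform case analysis: for each of the sixteen phase combinations one must simultaneously verify the phase condition on $\lambda_i, \lambda_j$ and the inclusion $z_j\in (\text{const})\cdot K\, Q_{r_i}(z_i)$ that triggers the relevant comparison lemma, and then track the numerical constants carefully enough to obtain precisely $12K^2$ in the main statement and the sharper $12$ in the moreover case.
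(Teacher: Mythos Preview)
Your approach is correct and rests on the same backbone as the paper's: reduce by symmetry to one inequality, locate $z_j$ inside a large standard cube about $z_i$ so that the relevant $\lambda$-comparison among Lemmas~\ref{LEM2.1}--\ref{lem : relation of lambda z and lambda tilde z in p-phase} applies, and run through the sixteen phase combinations of Table~\ref{tab : 16 Cases}. The main tactical difference is in how the distance to $E(\Lambda)$ enters. The paper does not argue by contradiction: it fixes $w\in\frac{2}{K}U_i\cap\frac{2}{K}U_j$, invokes Lemma~\ref{lem : property of distance between U_i and E(Lambda)} and the triangle inequality to obtain $r_i\le d_i(w,E(\Lambda))$ and $d_j(w,E(\Lambda))\le 6r_j$, and then proves in each case a pointwise metric comparison $d_i(\cdot,\cdot)\le C\,d_j(\cdot,\cdot)$ (with $C=2K^2$ in general and $C=2$ when $U_j=Q_j^{q,s}$), from which $r_i\le 6C r_j$ follows immediately. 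Your version instead picks a nearest point $w\in E(\Lambda)$ to $z_j$ and estimates $d_i(z_i,w)$ directly via a three-step triangle inequality through $\tilde z$ and $z_j$. Both routes use the same comparison lemmas and the same case structure; the paper's metric-comparison formulation is a bit cleaner because it isolates the only case-dependent step (comparing the time-scaling factors) and avoids carrying three intermediate points through the arithmetic, while your contradiction setup has the advantage of not needing Lemma~\ref{lem : property of distance between U_i and E(Lambda)} as a separate ingredient. Your handling of the ``moreover'' part---first using the already-proven bound $r_i\le 12K^2 r_j$ to place $z_i\in 200K\,Q_{r_j}(z_j)$ and then invoking Lemma~\ref{LEM2.1} and Lemma~\ref{lem : relation of lambda z and lambda tilde z in p,q,s-phase} at the center $z_j$---is a legitimate bootstrap that the paper leaves more implicit.
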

\begin{proof}
    It is enough to prove that 
    $$
    r_i\leq 12K^2 r_j
    $$
    for $j\in\mathbb{N}$. If $r_i<r_j$, then clearly the conclusion is satisfied. Thus, we assume $r_j\leq r_i$. Then 
    \begin{equation}\label{eq : z_j in cylinder of z_i}
        z_j\in 4Q_{r_i}(z_i).
    \end{equation}
    Let $w\in \frac{2}{K}U_i \cap \frac{2}{K}U_j$. Since $d_i(z_i,w)\leq \frac{2}{K}r_i\leq 2r_i$ and $d_j(z_j,w)\leq \frac{2}{K}r_j\leq 2r_j$, using Lemma \ref{lem : property of distance between U_i and E(Lambda)} and triangle inequality as in \cite[Lemma 3.8]{Wontae2023a}, we get
    \begin{equation}    \label{eq : estimate of d i and d j}
        r_i\leq d_i(w,E(\Lambda)) \quad \text{and}\quad d_j(w,E(\Lambda))\leq 6r_j.
    \end{equation}
    To complete the proof, we consider $16$ cases in Table \ref{tab : 16 Cases} with $z=z_i$ and $\tilde{z}=z_j$.

    \textit{Case $(1$-$1)$, $(2$-$1)$, $(3$-$1)$, $(4$-$1)$, $(1$-$2)$, $(1$-$3)$ and $(1$-$4)$}. Using Lemma \ref{lem : relation of lambda z and lambda tilde z in p,s-phase}, Lemma \ref{lem : relation of lambda z and lambda tilde z in p,q-phase} and Lemma \ref{lem : relation of lambda z and lambda tilde z in p,q,s-phase} and following the proof of Case 1, Case 2 and Case 3 in \cite[Lemma 3.8]{Wontae2023a}, we have the conclusion.

    \textit{Case $(2$-$2)$}. We obtain from Lemma \ref{lem : relation of lambda z and lambda tilde z in p,q-phase} and $K^2\lambda_j^p\geq b(z_j)\lambda_j^s$ that 
    {$$
    \begin{aligned}
        g_q(z_i,\lambda_i)\lambda_i^{-2}&\leq g_{q,s}(z_i,\lambda_i)\lambda_i^{-2}\\
        &\leq K^{\frac{4}{p}}g_{q,s}(z_j,\lambda_j)\lambda_j^{-2}\\
        &\leq 2K^{2+\frac{4}{p}}g_{q}(z_j,\lambda_j)\lambda_j^{-2}\\
        &\leq 4K^4g_{q}(z_j,\lambda_j)\lambda_j^{-2},
    \end{aligned}
    $$}
    where the last inequality follows from $\frac{1}{p}\leq \frac{1}{2}$. Therefore, we obtain
    $$
    d_i(z,w)\leq 2K^2d_j(z,w) \qquad \text{for any }z\in E(\Lambda),
    $$
    and hence, by \eqref{eq : estimate of d i and d j}, we have the conclusion.
    
    \textit{Case $(2$-$3)$, $(2$-$4)$, $(3$-$3)$, $(3$-$2)$ and $(3$-$4)$}. Proceed similarly to the proof above.

    \textit{Case $(4$-$4)$} Using Lemma \ref{lem : relation of lambda z and lambda tilde z in p,q,s-phase} and following the proof of Case 3 in \cite[Lemma 3.8]{Wontae2023a}, we have $r_i\leq 12 r_j$.

    \textit{Case $(4$-$2)$ and $(4$-$3)$}. Note that, by Lemma \ref{lem : relation of lambda z and lambda tilde z in p,q,s-phase},
    {$$
    \begin{aligned}
        g_{q}(z_i,\lambda_i)\lambda_i^{-2}&\leq g_{q,s}(z_i,\lambda_i)\lambda_i^{-2}\\
        &=g_{q,s}(z_j,\lambda_j)\lambda_i^{-2}\\
        &\leq 2^\frac{2}{p}g_{q,s}(z_j,\lambda_j)\lambda_j^{-2}
    \end{aligned}
    $$}
    and, similarly,
    { $$
        g_s(z_i,\lambda_i)\lambda_i^{-2}\leq 2^\frac{2}{p}g_{q,s}(z_i,\lambda_i)\lambda_j^{-2}.
    $$
    Thus, we obtain $d_i(z,w)\leq 2 d_j(z,w)$ and, therefore, conclude $r_i\leq 12 r_j$.}
\end{proof}
By this lemma, we get
\begin{equation}\label{Q(z_j) is contained in Q (z_i)}
    \frac{2}{K}Q_{r_j}(z_j)\subset 200 K Q_{r_i}(z_i)
\end{equation}
for all $j\in\mathcal{I}$. Using this, we summarize Lemma \ref{lem : relation of lambda z and lambda tilde z in p,s-phase}, Lemma \ref{lem : relation of lambda z and lambda tilde z in p,q-phase}, Lemma \ref{lem : relation of lambda z and lambda tilde z in p,q,s-phase} and Lemma \ref{lem : relation of lambda z and lambda tilde z in p-phase}.
\begin{lemma}   \label{lem : comparison of lambda_i and lambda_j}
    For any $i\in \mathbb{N}$ and $j\in\mathcal{I}$, we have $K^{-\frac{2}{p}}\lambda_j\leq \lambda_i\leq K^{\frac{2}{p}}\lambda_j$. Moreover, if $U_i=Q_i^{q,s}$, then $2^{-\frac{1}{p}}\lambda_j\leq \lambda_i\leq 2^\frac{1}{p}\lambda_j$.
\end{lemma}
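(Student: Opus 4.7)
The plan is to consolidate the four phase-comparison lemmas into a single bound by a short case analysis on the phase carried by $U_i$. The essential starting observation will be that $j\in\mathcal{I}$ forces $z_j$ to sit in a controlled neighborhood of $z_i$: since $z_j\in\tfrac{2}{K}Q_{r_j}(z_j)$, the inclusion \eqref{Q(z_j) is contained in Q (z_i)} immediately yields $z_j\in 200K\,Q_{r_i}(z_i)$. Running the same Vitali argument with the roles of $i$ and $j$ interchanged, I get the symmetric inclusion $z_i\in 200K\,Q_{r_j}(z_j)$, which I will need for the $p$-phase case below.

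If $U_i=Q_i^q$, then $z_i$ lies in the $(p,q)$-phase and I would apply Lemma \ref{lem : relation of lambda z and lambda tilde z in p,q-phase} with $z=z_i$ and $\tilde z=z_j$ to produce the desired two-sided bound $K^{-2/p}\lambda_j\le\lambda_i\le K^{2/p}\lambda_j$; the case $U_i=Q_i^s$ is handled identically via Lemma \ref{lem : relation of lambda z and lambda tilde z in p,s-phase}. When $U_i=Q_i^{q,s}$, the hypotheses of Lemma \ref{lem : relation of lambda z and lambda tilde z in p,q,s-phase} are met and it delivers the sharper bound $2^{-1/p}\lambda_j\le\lambda_i\le 2^{1/p}\lambda_j$, which is exactly the \emph{moreover} clause (and a fortiori the first inequality).

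The only case that requires care is $U_i=Q_i$, in which $z_i$ sits in the pure $p$-phase. Here Lemma \ref{lem : relation of lambda z and lambda tilde z in p-phase} applied with $z=z_i$, $\tilde z=z_j$ yields only the upper half $\lambda_i\le K^{2/p}\lambda_j$. For the lower half I would exploit the symmetric inclusion $z_i\in 200K\,Q_{r_j}(z_j)$ recorded above and then invoke whichever one of Lemmas \ref{lem : relation of lambda z and lambda tilde z in p,s-phase}, \ref{lem : relation of lambda z and lambda tilde z in p,q-phase}, \ref{lem : relation of lambda z and lambda tilde z in p,q,s-phase} or \ref{lem : relation of lambda z and lambda tilde z in p-phase} corresponds to the phase of $z_j$ (now with $z=z_j$, $\tilde z=z_i$); each of the four alternatives produces $\lambda_j\le K^{2/p}\lambda_i$, equivalently $K^{-2/p}\lambda_j\le\lambda_i$, closing the claim.

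The principal bookkeeping obstacle will be reconciling the neighborhood constant $200K$ coming from \eqref{Q(z_j) is contained in Q (z_i)} with the $50K$ threshold in the hypothesis of Lemma \ref{lem : relation of lambda z and lambda tilde z in p-phase}; this can be absorbed into the constant $K$, which was chosen in \eqref{defn of K} to be as large as necessary, by retracing the radius comparison of Lemma \ref{lem : comparision of r_j and r_i} along the $p$-phase alternative, or equivalently by observing that the argument behind Lemma \ref{lem : relation of lambda z and lambda tilde z in p-phase} tolerates a larger multiplicative constant at the price of replacing $K^{2/p}$ by a harmless universal multiple thereof.
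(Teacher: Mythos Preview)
Your approach is correct and matches the paper's own (very terse) proof, which simply cites the inclusion \eqref{Q(z_j) is contained in Q (z_i)} and the four phase-comparison Lemmas \ref{lem : relation of lambda z and lambda tilde z in p,s-phase}--\ref{lem : relation of lambda z and lambda tilde z in p-phase}; your explicit symmetry argument for the lower bound in the $p$-phase case is in fact clearer than what the paper writes.

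Two small remarks on the bookkeeping point you flag. First, the $50K$ versus $200K$ discrepancy is not actually an obstacle: the inclusion \eqref{Q(z_j) is contained in Q (z_i)} places the \emph{whole cylinder} $\tfrac{2}{K}Q_{r_j}(z_j)$ inside $200K\,Q_{r_i}(z_i)$, but the center $z_j$ itself already lies in $50K\,Q_{r_i}(z_i)$, as a direct computation using $r_j\le 12K^2 r_i$ from Lemma~\ref{lem : comparision of r_j and r_i} and $\tfrac{2}{K}U_i\cap\tfrac{2}{K}U_j\ne\emptyset$ shows. Second, your remark that $K$ ``was chosen to be as large as necessary'' is not quite right---$K$ in \eqref{defn of K} is a fixed quantity determined by the data, not a tunable parameter---but your alternative fix (that the argument behind Lemma~\ref{lem : relation of lambda z and lambda tilde z in p-phase} tolerates replacing $50K$ by $200K$) is correct, and in any case unnecessary here.
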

 We show from the previous two lemmas that the measures of the neighboring cylinders are comparable.
\begin{lemma}   \label{lem : uniformly bounded of |U_i|/|U_j|} 
    There exists $c$ depending on $n$ and $K$ such that
    $$
    \sup_{\substack{i\in\mathbb{N} \\j\in \mathcal{I}}} \frac{|U_i|}{|U_j|}\leq c.
    $$
\end{lemma}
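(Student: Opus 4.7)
The plan is to factor $|U_i|/|U_j|$ into a spatial part and a temporal part, and control each using the two preceding lemmas together with a unified description of the time scale in every phase.

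First, write $|U_i| = |B_{r_i}|\,|I_i|$ and similarly $|U_j| = |B_{r_j}|\,|I_j|$. The spatial ratio is immediate: Lemma \ref{lem : comparision of r_j and r_i} gives $r_i \leq 12K^2 r_j$ and $r_j \leq 12K^2 r_i$, hence
$$\frac{|B_{r_i}|}{|B_{r_j}|} = \left(\frac{r_i}{r_j}\right)^{n} \leq (12K^2)^n.$$
For the temporal ratio, introduce $\tau_i := |I_i|/(2r_i^2)$, which equals one of $\lambda_i^{2-p}$, $\lambda_i^2/g_q(z_i,\lambda_i)$, $\lambda_i^2/g_s(z_i,\lambda_i)$, or $\lambda_i^2/g_{q,s}(z_i,\lambda_i)$ depending on the phase of $U_i$.

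The key observation is that in every phase, the defining inequalities force $\tau_i$ to be comparable to $\lambda_i^2/\Lambda = \lambda_i^2/g_{q,s}(z_i,\lambda_i)$ with constants depending only on $K$. Indeed, in the $p$-phase the conditions $K^2\lambda_i^p \geq a(z_i)\lambda_i^q$ and $K^2\lambda_i^p \geq b(z_i)\lambda_i^s$ yield $\lambda_i^p \leq g_{q,s}(z_i,\lambda_i) \leq (1+2K^2)\lambda_i^p$, so $\tau_i = \lambda_i^{2-p}$ satisfies $\lambda_i^2/\Lambda \leq \tau_i \leq (1+2K^2)\lambda_i^2/\Lambda$. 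In the $(p,q)$-phase, $b(z_i)\lambda_i^s \leq K^2\lambda_i^p < a(z_i)\lambda_i^q \leq g_q(z_i,\lambda_i)$ yields $g_q(z_i,\lambda_i) \leq g_{q,s}(z_i,\lambda_i) \leq 2 g_q(z_i,\lambda_i)$, and $\tau_i \sim \lambda_i^2/\Lambda$ up to a factor of $2$. The $(p,s)$-phase is symmetric, and the $(p,q,s)$-phase is trivial. Thus in all four cases,
$$\frac{\lambda_i^2}{\Lambda} \leq \tau_i \leq (1+2K^2)\frac{\lambda_i^2}{\Lambda},$$
and the same for $\tau_j$ with the same bounds.

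Since $\Lambda = g_{q,s}(z_i,\lambda_i) = g_{q,s}(z_j,\lambda_j)$ is the \emph{same} quantity for both indices, the temporal ratio reduces to
$$\frac{|I_i|}{|I_j|} = \frac{\tau_i r_i^2}{\tau_j r_j^2} \leq (1+2K^2)\left(\frac{\lambda_i}{\lambda_j}\right)^2 \left(\frac{r_i}{r_j}\right)^2 \leq (1+2K^2)\, K^{4/p}\,(12K^2)^2,$$
where the bound $(\lambda_i/\lambda_j)^2 \leq K^{4/p}$ comes from Lemma \ref{lem : comparison of lambda_i and lambda_j}. Multiplying the spatial and temporal bounds yields $|U_i|/|U_j| \leq c(n,K)$.

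The only subtlety is the unified reduction $\tau_i \sim_K \lambda_i^2/\Lambda$, which bypasses a cumbersome 16-case argument; once this is in hand, the proof is a two-line application of Lemmas \ref{lem : comparision of r_j and r_i} and \ref{lem : comparison of lambda_i and lambda_j}. I do not anticipate a genuine obstacle, since the phase conditions are precisely designed so that $\lambda_i^p$, $a(z_i)\lambda_i^q$, $b(z_i)\lambda_i^s$ and $g_{q,s}(z_i,\lambda_i)$ are all $K$-comparable to the relevant time-scaling denominator.
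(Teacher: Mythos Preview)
Your proof is correct and takes a genuinely different, more economical route than the paper. The paper proceeds by a case analysis over the sixteen combinations of phases for $U_i$ and $U_j$ (grouped into a handful of representative computations), in each case directly comparing $|I_i|$ with $|I_j|$ using Lemma~\ref{lem : comparision of r_j and r_i}, Lemma~\ref{lem : comparison of lambda_i and lambda_j}, and the phase inequalities such as $a(z_i)\lambda_i^q\le K^2\lambda_i^p$. Your observation that the phase conditions always force the time-scale $\tau_i=|I_i|/(2r_i^2)$ to satisfy $\lambda_i^2/\Lambda\le\tau_i\le(1+2K^2)\lambda_i^2/\Lambda$ collapses all sixteen cases at once: since $\Lambda=g_{q,s}(z_i,\lambda_i)=g_{q,s}(z_j,\lambda_j)$ is the same on both sides, the temporal ratio reduces to $(\lambda_i/\lambda_j)^2(r_i/r_j)^2$ up to the factor $1+2K^2$, and the two preceding lemmas finish immediately. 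What your approach buys is brevity and conceptual clarity; what the paper's case split buys is slightly sharper explicit constants in individual cases (e.g.\ the factor $2^{1/p}$ rather than $K^{2/p}$ when $U_i=Q_i^{q,s}$), though these play no role here.
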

 \begin{proof}
    We divide it into $16$ cases as in Lemma \ref{lem : comparision of r_j and r_i}.
    
    \textit{Case $(1$-$1)$, $(1$-$2)$, $(1$-$3)$  $(1$-$4)$}. We obtain from $I_i\subset I_{r_i,\lambda_i}$, Lemma \ref{lem : comparision of r_j and r_i} and Lemma \ref{lem : comparison of lambda_i and lambda_j} that 
    $$
    \frac{|U_i|}{|U_j|}\leq \frac{|Q_i|}{|Q_j|}=\frac{2|B_1|r_i^{n+2}\lambda_i^{2-p}}{2|B_1|r_j^{n+2}\lambda_j^{2-p}}\leq \frac{(12K^2)^{n+2}r_j^{n+2}K^{\frac{2(p-2)}{p}}\lambda_j^{2-p}}{r_j^{n+2}\lambda_j^{2-p}}\leq (12K^2)^{n+3}
    $$
    for all $i\in\mathbb{N}$ and $j\in \mathcal{I}$.
    
    \textit{Case $(2$-$2)$, $(2$-$4)$, $(3$-$3)$, $(3$-$4)$ and $(4$-$4)$}. We conclude in a similar way from Lemma \ref{lem : comparision of r_j and r_i} and Lemma \ref{lem : comparison of lambda_i and lambda_j}.

    \textit{Case $(2$-$1)$}. We get from $a(z_i)\lambda_i^q\leq K^2\lambda_i^p$, $b(z_i)\lambda_i^s\leq K^2\lambda_i^p$, Lemma \ref{lem : comparision of r_j and r_i} and Lemma \ref{lem : comparison of lambda_i and lambda_j} that 
    \begin{align*}
        \frac{|U_i|}{|U_j|}&=\frac{|Q_i|}{|Q_j^{q}|}=\frac{2|B_1|r_i^{n+2}\lambda_i^2 g_q(z_j,\lambda_j)}{2|B_1|r_j^{n+2}\lambda_j^2\lambda_i^p}=\frac{r_i^{n+2}\lambda_i^2g_q(z_j,\lambda_j)}{r_j^{n+2}\lambda_j^2\lambda_i^p}\\
        &\leq \frac{(12K^2)^{n+2}K^{\frac{4}{p}}\lambda_j^2 r_j^{n+2}g_q(z_j,\lambda_j)}{\lambda_j^2r_j^{n+2}\lambda_i^p}=\frac{3(12K^2)^{n+3}g_q(z_j,\lambda_j)}{\lambda_i^p+\lambda_i^p+\lambda_i^p}\\
        &\leq \frac{2(12K^2)^{n+3} K^2\Lambda}{\lambda_i^p+a(z_i)\lambda_i^q+b(z_i)\lambda_i^s}\leq c(n,K)
    \end{align*}
    for all $i\in\mathbb{N}$ and $j\in \mathcal{I}$.
    
    \textit{Case $(3$-$1)$, $(4$-$1)$, $(3$-$2)$, $(2$-$3)$, $(4$-$2)$, $(4$-$3)$}. Similarly, we obtain the conclusion.
 \end{proof}
Now, we establish the inclusion property of $U_i$ and $U_j$ for any $i\in \mathbb{N}$ and $j\in \mathcal{I}$.
\begin{lemma}
    Let $i\in\mathbb{N}$ be such that $U_i=Q_i$. We have $\frac{2}{K}U_j\subset 50K^2 Q_i$ for every $j\in\mathcal{I}$.
\end{lemma}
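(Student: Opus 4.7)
The plan is to verify the spatial and temporal inclusions separately, exploiting the fact that the hypothesis $U_i=Q_i$ fixes a uniform target time-scale on the right-hand side.

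For the spatial part, the intersection $\frac{2}{K}U_i\cap \frac{2}{K}U_j\neq\emptyset$ already forces $|x_i-x_j|\leq \frac{2}{K}(r_i+r_j)$ at the level of the base balls. Combining this with Lemma \ref{lem : comparision of r_j and r_i}, which gives $r_j\leq 12K^2 r_i$, the triangle inequality shows that any $x\in\frac{2}{K}B_j$ satisfies $|x-x_i|\leq \frac{2}{K}(r_i+2r_j)\leq 50Kr_i\leq 50K^2r_i$, so $\frac{2}{K}B_j\subset 50K^2 B_i$.

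For the temporal part, the key unifying observation is that no matter which of the four shapes $Q_j,\,Q_j^q,\,Q_j^s,\,Q_j^{q,s}$ the set $U_j$ takes, its intrinsic time half-length $T_j$ obeys
$$
T_j\;\leq\;\lambda_j^{2-p}r_j^{2},
$$
because $g_q(z_j,\lambda_j),\,g_s(z_j,\lambda_j),\,g_{q,s}(z_j,\lambda_j)\geq \lambda_j^{p}$. Consequently, for any $\tau\in\frac{2}{K}I_j$ and any point $\sigma\in\frac{2}{K}U_i\cap\frac{2}{K}U_j$, the triangle inequality yields
$$
|\tau-t_i|\;\leq\;|\tau-t_j|+|t_j-\sigma|+|\sigma-t_i|\;\leq\;2\bigl(\tfrac{2}{K}\bigr)^{2}\lambda_j^{2-p}r_j^{2}+\bigl(\tfrac{2}{K}\bigr)^{2}\lambda_i^{2-p}r_i^{2}.
$$
Applying Lemma \ref{lem : comparision of r_j and r_i} ($r_j\leq 12K^{2}r_i$) together with Lemma \ref{lem : comparison of lambda_i and lambda_j}, which gives $\lambda_j^{2-p}\leq K^{(p-2)\cdot 2/p}\lambda_i^{2-p}\leq K^{2}\lambda_i^{2-p}$ because $p\geq 2$, collapses the right-hand side to a quantity of the form $cK^{4}\lambda_i^{2-p}r_i^{2}$ with $c<(50)^{2}$. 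This shows $\tau\in I_{50K^{2}r_i,\lambda_i}(t_i)$, which combined with the spatial inclusion gives $\frac{2}{K}U_j\subset 50K^{2}Q_i$.

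The main obstacle in a naive approach would be the case analysis over the four possible shapes of $U_j$, mirroring the $16$-case analysis already carried out earlier in this section. What makes the present lemma much shorter is that fixing $U_i=Q_i$ forces the target time-scale to be the simple $\lambda_i^{2-p}r_i^{2}$, while the uniform upper bound $T_j\leq\lambda_j^{2-p}r_j^{2}$ erases any dependence of the left-hand side on the phase of $z_j$. Thus the phase-by-phase splitting becomes unnecessary and the estimate reduces to the scalar inequality above.
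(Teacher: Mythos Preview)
Your argument is correct and matches the paper's proof almost exactly: the spatial inclusion via $r_j\le 12K^2 r_i$, and the time inclusion via the uniform bound $I_j\subset I_{r_j,\lambda_j}(t_j)$ (your observation $T_j\le \lambda_j^{2-p}r_j^2$) combined with $\lambda_j^{2-p}\le K^{2(p-2)/p}\lambda_i^{2-p}$. The only cosmetic difference is that the paper writes the time estimate as a two-term triangle inequality $|\tau-t_j|+|t_j-t_i|$ rather than splitting through an intersection point $\sigma$, but the content is identical.
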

\begin{proof}
    Since $\frac{2}{K} B_i \cap \frac{2}{K} B_j \neq \emptyset$ and $r_j\leq 12K^2 r_i$, we clearly obtain $\frac{2}{K}B_j\subset 50K B_i$. It remains to prove the inclusion in the time direction. As $I_j\subset I_{r_j,\lambda_j}(t_j)$, for $\tau \in \frac{2}{K}I_j$, we have from Lemma \ref{lem : comparision of r_j and r_i} and Lemma \ref{lem : comparison of lambda_i and lambda_j} that
    $$
    \begin{aligned}
        |\tau - t_i|&\leq |\tau - t_j|+|t_j - t_i|\leq 2\lambda_j^{2-p}\left(\frac{2}{K}r_j\right)^2+\lambda_i^{2-p}\left(\frac{2}{K}r_i\right)^2\\
        &\leq 2\left(K^{-\frac{2}{p}}\lambda_i\right)^{2-p}(24Kr_i)^2+\lambda_i^{2-p}(2r_i)^2\leq \lambda_i^{2-p}(50K^2r_i)^2 
    \end{aligned}
    $$
    and hence $\frac{2}{K}U_j \subset 50K^2 Q_i$.
\end{proof}
\begin{lemma}
    Let $i\in \mathbb{N}$ be such that, either $U_i=Q_i^q$ or $U_i=Q_i^s$ holds. Then we have $\frac{2}{K}U_j\subset 100K^3 U_i$ for every $j\in \mathcal{I}$.
\end{lemma}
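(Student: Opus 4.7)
The approach is to mimic the proof structure of the preceding lemma (the case $U_i=Q_i$), but now handling the four possibilities for $U_j\in\{Q_j,Q_j^q,Q_j^s,Q_j^{q,s}\}$ combined with the two possibilities $U_i\in\{Q_i^q,Q_i^s\}$. By symmetry in the roles of $a$ and $b$ (and of $q$ and $s$), it suffices to treat $U_i=Q_i^q$; the case $U_i=Q_i^s$ follows mutatis mutandis by invoking Lemma~\ref{LEM2.1} and Lemma~\ref{lem : comparision of b(cdot) in p,s-phase} in the analogous places.

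First I would dispose of the spatial direction. Since $j\in\mathcal{I}$, the radius comparison in Lemma~\ref{lem : comparision of r_j and r_i} gives $r_j\leq 12K^2 r_i$, and together with $\frac{2}{K}B_j\cap\frac{2}{K}B_i\neq\emptyset$ this yields $\frac{2}{K}B_j\subset 50K B_i\subset 100K^3 B_i$ by the triangle inequality. The real work is the time direction. Fixing any $w=(x_w,t_w)\in\frac{2}{K}U_i\cap\frac{2}{K}U_j$ and $\tau\in\frac{2}{K}I_j$, I apply the triangle inequality twice to obtain
\begin{equation*}
|\tau-t_i|\leq |\tau-t_j|+|t_j-t_w|+|t_w-t_i|\leq 2\cdot(\text{time radius of }\tfrac{2}{K}U_j)+(\text{time radius of }\tfrac{2}{K}U_i),
\end{equation*}
so the task reduces to bounding the time half-length of $\frac{2}{K}U_j$ by a constant times $\frac{\lambda_i^2}{g_q(z_i,\lambda_i)}r_i^2$.

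For this, I would split into the four subcases according to $U_j$. In the case $U_j=Q_j^q$, I use Lemma~\ref{lem : comparision of a(cdot) in p,q-phase} (which applies since $w\in 200KQ_{r_i}(z_i)$ by \eqref{Q(z_j) is contained in Q (z_i)}) together with Lemma~\ref{lem : comparison of lambda_i and lambda_j} to derive exactly the estimate
\begin{equation*}
\frac{\lambda_j^2}{g_q(z_j,\lambda_j)}\leq 2K^6\frac{\lambda_i^2}{g_q(z_i,\lambda_i)},
\end{equation*}
reproducing the calculation already carried out in Case $(1$-$2)$ of the Vitali-covering argument. In the case $U_j=Q_j$, I bound $\lambda_j^{2-p}=2\lambda_j^2/(\lambda_j^p+\lambda_j^p)\leq 2K^2\lambda_j^2/g_q(z_j,\lambda_j)$ (using $K^2\lambda_j^p\geq a(z_j)\lambda_j^q$ is not available here, but rather $\lambda_j^p\leq g_q(z_j,\lambda_j)$), and then reduce to the previous estimate. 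In the cases $U_j=Q_j^s$ or $Q_j^{q,s}$ I use $g_s,g_{q,s}\geq\lambda_j^p$ and Lemma~\ref{lem : comparision of a(cdot) in p,q-phase} applied at $z_i$ to rewrite the resulting scale again in terms of $g_q(z_i,\lambda_i)$; the key additional fact is that $a(z_i)\lambda_i^q>K^2\lambda_i^p$ gives $g_q(z_i,\lambda_i)\leq 2a(z_i)\lambda_i^q$, allowing us to absorb the $a$-contribution cleanly. In every subcase, combining the time-scale bound with $r_j\leq 12K^2 r_i$ and Lemma~\ref{lem : comparison of lambda_i and lambda_j} produces
\begin{equation*}
(\text{time radius of }\tfrac{2}{K}U_j)\leq c\,K^6\,\frac{\lambda_i^2}{g_q(z_i,\lambda_i)}r_i^2,
\end{equation*}
and then $|\tau-t_i|\leq (100K^3)^2\lambda_i^2 g_q(z_i,\lambda_i)^{-1}r_i^2$ follows by a straightforward constant check.

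The main obstacle is the mixed phase case $U_i=Q_i^q$ and $U_j=Q_j^s$ (and its counterpart for $U_i=Q_i^s$): at $z_i$ the $a$-term dominates while at $z_j$ the $b$-term dominates, so one must use the Hölder continuity of \emph{both} coefficients simultaneously, together with Lemma~\ref{lem : comparision of a(cdot) in p,q-phase} and the trivial lower bound $g_s(z_j,\lambda_j)\geq\lambda_j^p$, to express both time scales through a common denominator $g_q(z_i,\lambda_i)$. Once this mixed-phase bookkeeping is done carefully, the factor $100K^3$ (as opposed to $50K^2$ in the $U_i=Q_i$ lemma) is precisely what is required to absorb the extra $K^{2/p}$ losses coming from the two applications of Lemma~\ref{lem : comparison of lambda_i and lambda_j} in the intrinsic scaling.
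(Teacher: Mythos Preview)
Your plan is sound but considerably more laborious than the paper's. The paper dispenses with the fourfold case split on $U_j$ by the single observation that $I_j\subset I_{r_j,\lambda_j}(t_j)$ for every phase (since $g_q,g_s,g_{q,s}\geq\lambda_j^p$), so it suffices to prove the time inclusion for $\tau\in\tfrac{2}{K}I_{r_j,\lambda_j}(t_j)$ and to work with the phase conditions of the $Q_j$-case. The core of the paper's computation then routes everything through $\Lambda$: from $K^2\lambda_j^p\geq a(z_j)\lambda_j^q$ and $K^2\lambda_j^p\geq b(z_j)\lambda_j^s$ one gets $3K^2\lambda_j^p\geq\Lambda$, whence
\[
\lambda_j^{2-p}\Bigl(\tfrac{2}{K}r_j\Bigr)^2
\;\leq\;12\,\frac{\lambda_j^2}{\Lambda}\,r_j^2
\;\leq\;12K^{4/p}\,\frac{\lambda_i^2}{\Lambda}\,(12K^2r_i)^2
\;\leq\;\frac{\lambda_i^2}{g_q(z_i,\lambda_i)}\,(48K^3r_i)^2,
\]
using only $\lambda_j\leq K^{2/p}\lambda_i$, $r_j\leq 12K^2r_i$ and the trivial $\Lambda\geq g_q(z_i,\lambda_i)$. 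In particular the ``main obstacle'' you anticipate, the mixed situation $U_i=Q_i^q$, $U_j=Q_j^s$, never needs separate treatment.

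Two small corrections to your sketch. First, your parenthetical remark that ``$K^2\lambda_j^p\geq a(z_j)\lambda_j^q$ is not available here'' in the subcase $U_j=Q_j$ is wrong: this inequality is precisely the defining condition of the $Q_j$-phase and is exactly what the paper uses. Second, your proposed handling of $U_j=Q_j^s$ via the crude lower bound $g_s(z_j,\lambda_j)\geq\lambda_j^p$ does not close: it reduces to bounding $\lambda_j^{2-p}$ by $c\,\lambda_i^2/g_q(z_i,\lambda_i)$, which fails because $g_q(z_i,\lambda_i)$ may be much larger than $\lambda_i^p$ when $U_i=Q_i^q$. The correct move in that subcase is again to compare the phase denominator with $\Lambda$ (here $g_s(z_j,\lambda_j)\geq\Lambda/(1+K^2)$ since $a(z_j)\lambda_j^q\leq K^2\lambda_j^p$) and then use $\Lambda\geq g_q(z_i,\lambda_i)$. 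Once you see this, the paper's single-case reduction becomes transparent.
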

\begin{proof}
    We may assume that $U_i=Q_i^q$. Since $\frac{2}{K} B_i \cap \frac{2}{K} B_j \neq \emptyset$ and $r_j\leq 12K^2 r_i$, clearly, we obtain $\frac{2}{K}B_j\subset 50K B_i$. It remains to prove the inclusion in the time direction. Since $I_j\subset I_{r_j,\lambda_j}(t_j)$, it is enough to check only when $U_j=Q_j$. For $\tau \in \frac{2}{K}I_{r_j,\lambda_j}(t_j)$, we have 
    { $$
    |\tau-t_i|\leq |\tau - t_j|+|t_j - t_i|\leq 2\lambda_j^{2-p}\left(\frac{2}{K}r_j\right)^2 + \frac{\lambda_i^2}{g_q(z_i,\lambda_i)}\left(\frac{2}{K}r_i\right)^2.
    $$}
    Since $K^2\lambda_j^p\geq a(z_j)\lambda_j^q$, $K^2\lambda_j^p\geq b(z_j)\lambda_j^s$ and Lemma \ref{lem : comparision of r_j and r_i} and Lemma \ref{lem : comparison of lambda_i and lambda_j} give $r_j\leq 12K^2r_i$ and $\lambda_j\leq K^{\frac{2}{p}}\lambda_i$, we obtain
    { $$
    \begin{aligned}
        \lambda_j^{2-p}\left(\frac{2}{K}r_j\right)^2&=3\frac{\lambda_j^2}{\lambda_j^p+\lambda_j^p+\lambda_j^p}\left(\frac{2}{K}r_j\right)^2\leq 12\frac{\lambda_j^2}{\lambda_j^p+a(z_j)\lambda_j^q+b(z_j)\lambda_j^s}r_j^2\\
        &\leq 12\frac{K^\frac{4}{p}\lambda_i^2}{\lambda_i^p+a(z_i)\lambda_i^q+b(z_i)\lambda_i^s}(12K^2r_i)^2\leq \frac{\lambda_i^2}{g_q(z_i,\lambda_i)}(48K^3r_i)^2.
    \end{aligned}
    $$
    Thus, we have 
    $$
    |\tau-t_i|\leq 2\frac{\lambda_i^2}{g_q(z_i,\lambda_i)}(48K^3r_i)^2 + \frac{\lambda_i^2}{g_q(z_i,\lambda_i)}\left(2r_i\right)^2\leq \frac{\lambda_i^2}{g_q(z_i,\lambda_i)}\left(100K^3 r_i\right)^2, 
    $$}
    and hence $\frac{2}{K}U_j \subset 100K^3 U_i$. 
\end{proof}
\begin{lemma}
    Let $i\in \mathbb{N}$ be such that $U_i=Q_i^{q,s}$. Then we have $\frac{2}{K}U_j\subset 200 U_i$ for each $j\in \mathcal{I}$.
\end{lemma}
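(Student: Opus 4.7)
My plan is to parallel the proofs of the two preceding lemmas: split the inclusion $\frac{2}{K}U_j\subset 200 U_i$ into a spatial containment $\frac{2}{K}B_j\subset 200 B_i$ and a time containment, and in the time direction reduce to the worst case $U_j=Q_j$ via $I_j\subset I_{r_j,\lambda_j}(t_j)$, which holds in every phase of $U_j$.

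The decisive new ingredient is that, since $U_i=Q_i^{q,s}$, we are guaranteed (via \eqref{Q(z_j) is contained in Q (z_i)}) that $z_j\in 200K\,Q_{r_i}(z_i)$, which activates the strongest form of Lemma~\ref{lem : relation of lambda z and lambda tilde z in p,q,s-phase}: $2^{-1/p}\lambda_j\leq\lambda_i\leq 2^{1/p}\lambda_j$. Combined with Lemma~\ref{LEM2.1}, which yields $a(z_j)\sim a(z_i)$ and $b(z_j)\sim b(z_i)$ with constants independent of $K$, I obtain $g_{q,s}(z_j,\lambda_j)\sim g_{q,s}(z_i,\lambda_i)$ with $K$-free constants. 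I will then run the argument of Lemma~\ref{lem : comparision of r_j and r_i} with the pivot $z_i$ in $Q^{q,s}$-phase (playing the symmetric role to the moreover clause stated there) to upgrade $r_j\leq 12 K^2 r_i$ into a $K$-independent comparison $r_j\leq 12 r_i$. The spatial inclusion $\frac{2}{K}B_j\subset 200 B_i$ then follows from $\frac{2}{K}B_i\cap\frac{2}{K}B_j\neq\emptyset$ and the triangle inequality.

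For the time containment, I pick $w\in\frac{2}{K}U_i\cap\frac{2}{K}U_j$, take $\tau\in\frac{2}{K}I_{r_j,\lambda_j}(t_j)$ (worst case $U_j=Q_j$), and estimate
\[|\tau-t_i|\leq 2\lambda_j^{2-p}\left(\tfrac{2r_j}{K}\right)^2+\frac{\lambda_i^2}{g_{q,s}(z_i,\lambda_i)}\left(\tfrac{2r_i}{K}\right)^2.\]
Using $U_j=Q_j$, namely $a(z_j)\lambda_j^q\leq K^2\lambda_j^p$ and $b(z_j)\lambda_j^s\leq K^2\lambda_j^p$, I rewrite
\[\lambda_j^{2-p}=\frac{3\lambda_j^2}{\lambda_j^p+\lambda_j^p+\lambda_j^p}\leq \frac{(1+2K^2)\lambda_j^2}{g_{q,s}(z_j,\lambda_j)}\leq\frac{3K^2\lambda_j^2}{g_{q,s}(z_j,\lambda_j)};\]
the factor $K^2$ cancels the $1/K^2$ coming from $(2r_j/K)^2$, and the $K$-free comparabilities $\lambda_j\sim\lambda_i$, $g_{q,s}(z_j,\lambda_j)\sim g_{q,s}(z_i,\lambda_i)$, $r_j\leq 12 r_i$ reduce the first term to a $K$-independent multiple of $\lambda_i^2 r_i^2/g_{q,s}(z_i,\lambda_i)$. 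The second term is bounded by $\lambda_i^2 r_i^2/g_{q,s}(z_i,\lambda_i)$ directly. A bookkeeping check confirms that the total multiple is at most $200^2$, yielding $\tau\in I_{200 r_i,\lambda_i}^{q,s}(t_i)$ and hence $\frac{2}{K}U_j\subset 200 U_i$.

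The main obstacle will be establishing the $K$-free radius comparison $r_j\leq 12 r_i$; it is not written as a standalone statement in Lemma~\ref{lem : comparision of r_j and r_i} but is the natural symmetric analog of its moreover clause, obtained by putting $z_i$ (rather than $z_j$) in the $Q^{q,s}$-phase and invoking Lemma~\ref{lem : relation of lambda z and lambda tilde z in p,q,s-phase} in place of its weaker counterparts.
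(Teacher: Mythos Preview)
Your proposal is correct and takes essentially the same route as the paper: the paper's one-line proof simply says to rerun the preceding lemma's argument with the sharper $K$-free inputs $r_j\le 12\,r_i$ and $2^{-1/p}\lambda_j\le\lambda_i\le 2^{1/p}\lambda_j$, which is precisely your plan. Your observation that the needed bound $r_j\le 12\,r_i$ is the symmetric analogue of the ``moreover'' clause of Lemma~\ref{lem : comparision of r_j and r_i} (obtained by swapping the roles of $i$ and $j$, using that $j\in\mathcal I_i$ iff $i\in\mathcal I_j$) is exactly what the paper is invoking, and your handling of the time direction via $I_j\subset I_{r_j,\lambda_j}(t_j)$ together with $g_{q,s}(z_j,\lambda_j)=\Lambda=g_{q,s}(z_i,\lambda_i)$ matches the paper's computation.
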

\begin{proof}
    The proof can be obtained by using $r_j\leq 12 r_i$ and $2^{-\frac{1}{p}}\lambda_j\leq \lambda_i \leq 2^\frac{1}{p}\lambda_j$ instead of $(12K^2)^{-1} r_j\leq r_i\leq 12K^2 r_j$ and $K^{-\frac{2}{p}}\lambda_j\leq \lambda_i\leq K^\frac{2}{p}\lambda_j$ in the proof of the above lemma.
\end{proof}
Through the above three lemmas, we obtain the condition
\begin{equation}    \label{eq : U_j subset 200K^4 U_i}
    \frac{2}{K}U_j \subset K_iU_i\qquad \text{for all }i\in\mathbb{N} \text{ and } j\in \mathcal{I},
\end{equation}
where $K_i$ is defined in \eqref{EQQ2.5}. Finally, we prove that the cardinality of $\mathcal{I}$ is uniformly bounded.
\begin{lemma}\label{lem : bound of I}
    There exists a constant $c$ depending only on $n$ and $K$ such that $|\mathcal{I}|\leq c$ for every $i\in\mathbb{N}$.
\end{lemma}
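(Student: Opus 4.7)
The plan is to run a standard volume-packing argument based on the pairwise disjointness established in Subsection \ref{subsection : Vitali covering}. The key ingredients already at our disposal are: the cylinders $\{\tfrac{1}{6K^6}U_k\}_{k\in\mathbb{N}}$ are pairwise disjoint by the Vitali selection, every $j\in\mathcal{I}$ satisfies the containment $\tfrac{2}{K}U_j\subset K_iU_i$ given by \eqref{eq : U_j subset 200K^4 U_i}, and the measures of neighboring cylinders are comparable via Lemma \ref{lem : uniformly bounded of |U_i|/|U_j|}.

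The first step is to observe that each $U_j$ (of any of the four phases) has the product form $B_{r_j}\times I_j$ where $|I_j|$ scales like $r_j^2$, so the scaling rule $|\rho\,U_j|=\rho^{n+2}|U_j|$ applies uniformly. In particular,
\[
\Bigl|\tfrac{1}{6K^6}U_j\Bigr|=(6K^6)^{-(n+2)}|U_j|\quad\text{and}\quad |K_iU_i|=K_i^{n+2}|U_i|.
\]
Fix $i\in\mathbb{N}$. Since $\tfrac{1}{6K^6}U_j\subset\tfrac{2}{K}U_j\subset K_iU_i$ for every $j\in\mathcal{I}$ and the sets $\{\tfrac{1}{6K^6}U_j\}_{j\in\mathcal{I}}$ are pairwise disjoint, summing their measures yields
\[
\sum_{j\in\mathcal{I}}(6K^6)^{-(n+2)}|U_j|=\sum_{j\in\mathcal{I}}\Bigl|\tfrac{1}{6K^6}U_j\Bigr|\leq |K_iU_i|=K_i^{n+2}|U_i|.
\]

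To turn this into a bound on $|\mathcal{I}|$, I would invoke Lemma \ref{lem : uniformly bounded of |U_i|/|U_j|}, which gives a constant $c=c(n,K)$ such that $|U_i|\leq c\,|U_j|$ for every $j\in\mathcal{I}$. Substituting $|U_j|\geq c^{-1}|U_i|$ on the left-hand side and dividing by $|U_i|>0$ gives
\[
|\mathcal{I}|\leq c\,(6K^6)^{n+2}K_i^{n+2},
\]
and since $K_i\leq 200K^3$ by \eqref{EQQ2.5}, the right-hand side is a constant depending only on $n$ and $K$, which is the claim.

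I do not anticipate a genuine obstacle here: the only mild point to verify is that the parabolic dilation $\rho\mapsto\rho U_j$ really does produce the volume factor $\rho^{n+2}$ for all four phase cylinders $Q_j,\,Q_j^q,\,Q_j^s,\,Q_j^{q,s}$, but this follows at once from the explicit expressions for $I_{r_j,\lambda_j}$, $I^q_{r_j,\lambda_j}$, $I^s_{r_j,\lambda_j}$, $I^{q,s}_{r_j,\lambda_j}$, where in each case the length is an $r_j$-independent constant (depending on $z_j,\lambda_j$) times $r_j^2$. All the heavy lifting has already been done in Lemma \ref{lem : comparision of r_j and r_i}, Lemma \ref{lem : comparison of lambda_i and lambda_j}, Lemma \ref{lem : uniformly bounded of |U_i|/|U_j|} and the covering property \eqref{eq : U_j subset 200K^4 U_i}, so the present lemma is really just a packaging of those facts into a bounded-overlap statement.
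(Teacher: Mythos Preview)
Your proposal is correct and follows essentially the same volume-packing argument as the paper: disjointness of the $\tfrac{1}{6K^6}U_j$, the containment \eqref{eq : U_j subset 200K^4 U_i}, and the measure comparison from Lemma \ref{lem : uniformly bounded of |U_i|/|U_j|} are combined exactly as you describe. The paper's version is terser (it writes the chain of inequalities in a single displayed line), but the logic is identical.
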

\begin{proof}
    Since the cylinders $\left\{\frac{1}{6K^6}U_j\right\}_{j\in\mathbb{N}}$ are disjoint, we get \eqref{eq : U_j subset 200K^4 U_i} and Lemma \ref{lem : uniformly bounded of |U_i|/|U_j|} that 
    $$
    |200K^4 U_i|\geq \left|\bigcup_{j\in \mathcal{I}} \frac{1}{6K^6}U_j\right|=\sum_{j\in \mathcal{I}}\left|\frac{1}{6K^6}U_j\right|\geq \sum_{j\in \mathcal{I}} c(n,K) |U_i|=c(n,K)|\mathcal{I}||U_i|,
    $$
    and hence $|\mathcal{I}|\leq c(n,K)$.
\end{proof}

We summarize the above results below.
\begin{lemma}\label{LEM2.9}
    Let $K$ be as in \eqref{defn of K} and $E(\Lambda)$ as in \eqref{defn of E}. There exists a collection $\{\frac{1}{K}U_{i}\}_{i\in\mathbb{N}}$ of cylinders defined as in \eqref{EQQ2.4} satisfying the following properties:
	\begin{enumerate}[label=(\roman*),series=theoremconditions]
		\item\label{i} $\displaystyle E(\Lambda)^c\subset \bigcup_{i\in\mathbb{N}}\frac{1}{K}U_i$.
		\item\label{ii} $\displaystyle \tfrac{1}{6K^6}U_i\cap\tfrac{1}{6K^6}U_j= \emptyset$ for every $i,j\in \mathbb{N}$ with $i\ne j$.
		\item\label{iii} $3r_i\le d_i(U_i,E(\Lambda))\le 4r_i$ for every $i\in \mathbb{N}$.
		\item\label{iv} $4U_i\subset E(\Lambda)^c$ and $5U_i\cap E(\Lambda)\ne\emptyset$ for every $i\in \mathbb{N}$.
		\item\label{v} $(12K^2)^{-1}r_j\le r_i\le 12K^2 r_j$ for every $i\in \mathbb{N}$, $j\in \mathcal{I}$.
		\item\label{vi}$K^{-\frac{2}{p}}\lambda_j\le \lambda_i\le K^\frac{2}{p}\lambda_j$ for every $j\in \mathcal{I}$. 
  \item\label{Uij}There exists a constant $c = c(n,K)$ such that $|U_i|\leq c|U_j|$, for every $i\in \mathbb{N}$ and $j\in \mathcal{I}$.
 \item\label{vii} $\frac{2}{K}U_j \subset K_iU_i$ for every $i\in \mathbb{N}$, $j\in \mathcal{I}$.
 
      \item\label{UUi} If $U_i=Q_i$, then there exists a constant $c=c([a]_\alpha,\alpha,K)$ such that $r_i^\alpha\lambda_i^q\le c\lambda_i^p$ and $r^{\beta}_i \lambda^s_i\leq c \lambda^p_i.$

		\item\label{viii} If $U_i=Q^q_{i}$, then $\tfrac{a(z_i)}{2}\le a(z)\le 2a(z_i)$ for every $z\in 200K Q_{r_i}(z_i)$. If $U_i=Q^s_i,$ then $\tfrac{b(z_i)}{2}\le b(z)\le 2b(z_i)$ for every $z\in 200K Q_{r_i}(z_i)$ and if $U_i=Q^{q,s}_i,$ then $\tfrac{a(z_i)}{2}\le a(z)\le 2a(z_i)$ and $\tfrac{b(z_i)}{2}\le b(z)\le 2b(z_i)$ for every $z\in 200K Q_{r_i}(z_i).$ 
		\item\label{ix} For any $i\in\mathbb{N}$, the cardinality of $\mathcal{I}$, denoted by $|\mathcal{I}|$, is finite. Moreover, there exists a constant $c= c(n,K)$, such that $|\mathcal{I}|\le c$.
	\end{enumerate}
\end{lemma}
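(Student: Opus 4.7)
The plan is to assemble the conclusion by collecting the individual results proved earlier in Subsection \ref{subsection : Vitali covering}, since nearly every clause of Lemma \ref{LEM2.9} has been established separately in preparation for this summary. I will proceed item-by-item, indicating which earlier lemma or construction supplies each property.

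First, \ref{i} and \ref{ii} are exactly the output of the Vitali-type selection carried out above: the uniform bound $r_z\le R$ for $z\in E(\Lambda)^c$ allows the decomposition $\mathcal F=\bigcup_{j}\mathcal F_j$, from which a countable pairwise-disjoint subfamily $\mathcal G$ is extracted in the usual way, and the sixteen-case inclusion analysis carried out in \eqref{eq : covering in R n+1} through the Cases $(1$-$1)$--$(4$-$4)$ shows that $\frac{1}{6K^6}U_{\tilde z}^\Lambda\subset \frac{1}{K}U_z^\Lambda$ for a suitably chosen $z$, which upgrades the covering from $\mathcal G$ to the dilated family $\{\frac{1}{K}U_i\}$. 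Property \ref{iii} is Lemma \ref{lem : property of distance between U_i and E(Lambda)}, and \ref{iv} then follows at once: the lower bound $d_i(U_i,E(\Lambda))\ge 3r_i$ immediately yields $4U_i\subset E(\Lambda)^c$, while the upper bound $d_i(U_i,E(\Lambda))\le 4r_i$ together with the definition \eqref{eq : definition of r_z} of $r_i$ gives $5U_i\cap E(\Lambda)\ne\emptyset$.

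For the comparability statements, \ref{v} is precisely Lemma \ref{lem : comparision of r_j and r_i}, and \ref{vi} is Lemma \ref{lem : comparison of lambda_i and lambda_j}. The measure comparability \ref{Uij} is Lemma \ref{lem : uniformly bounded of |U_i|/|U_j|}. The inclusion \ref{vii} synthesizes the three consecutive lemmas proved after Lemma \ref{lem : uniformly bounded of |U_i|/|U_j|}, distinguishing the cases $U_i=Q_i$, $U_i\in\{Q_i^q,Q_i^s\}$, and $U_i=Q_i^{q,s}$; the constants $50K^2$, $100K^3$, and $200$ produced there are all absorbed into the uniform constant $K_i$ defined in \eqref{EQQ2.5}.

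Property \ref{UUi} follows directly from the last lemma of the subsection before the Vitali discussion: in that lemma it is proved that for $U_i=Q_i$ one has $[a]_\alpha(50Kr_i)^\alpha\lambda_i^q\le (K^2-1)\lambda_i^p$ and $[b]_\beta(50Kr_i)^\beta\lambda_i^s\le (K^2-1)\lambda_i^p$, which is exactly the stated bound with a constant depending on $[a]_\alpha,[b]_\beta,\alpha,\beta,K$. Property \ref{viii} bundles Lemma \ref{LEM2.1}, Lemma \ref{lem : comparision of b(cdot) in p,s-phase}, and Lemma \ref{lem : comparision of a(cdot) in p,q-phase}: depending on the alternative governing the classification \eqref{EQQ2.4} of $U_i$, the relevant coefficient is shown to oscillate by at most a factor of two on $200K\,Q_{r_i}(z_i)$. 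Finally, \ref{ix} is Lemma \ref{lem : bound of I}, whose proof uses the disjointness \ref{ii}, the inclusion \ref{vii}, and the measure comparability \ref{Uij} to bound $|\mathcal I||U_i|$ from above by a fixed multiple of $|U_i|$.

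There is no serious obstacle left at this stage since the content is already distributed across the previous lemmas; the only nontrivial step is making sure the three different inclusion lemmas combine into the single formulation \eqref{eq : U_j subset 200K^4 U_i} with the dichotomy encoded in $K_i$, and that the sixteen-case bookkeeping for \ref{i} is consistent with the phase classification used to define the distance $d_i(\cdot,\cdot)$. These are essentially verifications rather than new arguments, so the proof reduces to an orderly compilation of references to the preceding lemmas.
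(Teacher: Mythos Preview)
Your proposal is correct and matches the paper's approach exactly: the paper presents Lemma \ref{LEM2.9} as a summary (``We summarize the above results below'') with no separate proof, the content having been established item-by-item in the preceding lemmas of Subsection \ref{subsection : Vitali covering}. Your compilation of references is accurate and complete.
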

\subsection{Partition of unity} The following lemma demonstrates the construction of a partition of unity subordinate to Whitney decomposition $\{\frac{2}{K}U_i\}_{i\in \mathbb{N}}.$
\begin{lemma}\label{lem : partition of unity}
There exists a partition of unity $\{\omega_i\}_{i \in \mathbb{N}}$ subordinate to the Whitney decomposition $\{\frac{2}{K}U_i\}_{i \in \mathbb{N}}$ with the following properties:
\begin{enumerate}[label=(\roman*),series=theoremconditions]
\item $0\leq \omega_i \leq 1,$ $\omega_i \in C^{\infty}_0(\frac{2}{K}U_i)$ for every $i \in \mathbb{N}$ and $\sum_{i \in \mathbb{N}}\omega_i=1$ on $E(\Lambda)^c.$
\item There exists a constant $c=c(n, K)$ such that $||\nabla \omega_j||_{\infty}\leq cr^{-1}_i,$ for every $i \in \mathbb{N}$ and $j \in \mathcal{I}.$
\item There exists a constant $c=c(n, K)$ such that
{ \begin{align*}
    ||\partial_t\omega_j||_{\infty}\leq \begin{cases}
        c r^{-2}_i \lambda^{p-2}_i,\,\,\, &\text{if}\,\,\, U_i=Q_i,\\
        cr^{-2}_ig_q(z_i,\lambda_i)\lambda^{-2}_i,\,\,\, &\text{if}\,\,\, U_i=Q^q_i,\\
        cr^{-2}_ig_s(z_i,\lambda_i)\lambda^{-2}_i,\,\,\, &\text{if}\,\,\, U_i=Q^s_i,\\
        cr^{-2}_i\Lambda \lambda^{-2}_i,\,\,\, &\text{if}\,\,\, U_i=Q^{q,s}_i
    \end{cases}
\end{align*}}
for any $i\in\mathbb{N}$ and $j\in\mathcal{I}.$
\end{enumerate}
\end{lemma}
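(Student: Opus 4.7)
The plan is to employ the standard scaled-bump-then-normalize recipe, leveraging the neighbor control summarized in Lemma \ref{LEM2.9}. First, for each $i\in\mathbb{N}$ I choose a smooth cut-off $\tilde{\omega}_i\in C^\infty_0(\tfrac{2}{K}U_i)$ with $\tilde{\omega}_i\equiv 1$ on $\tfrac{1}{K}U_i$ and $0\le\tilde{\omega}_i\le 1$, realized as a tensor product of a spatial radial cut-off of width $\tfrac{2}{K}r_i$ and a one-dimensional temporal cut-off whose half-width equals the time scale $\tau_i$ of $U_i$: namely $\tau_i=\lambda_i^{2-p}r_i^2$ if $U_i=Q_i$, $\tau_i=\lambda_i^2 g_q(z_i,\lambda_i)^{-1}r_i^2$ if $U_i=Q_i^q$, $\tau_i=\lambda_i^2 g_s(z_i,\lambda_i)^{-1}r_i^2$ if $U_i=Q_i^s$, and $\tau_i=\lambda_i^2\Lambda^{-1}r_i^2$ if $U_i=Q_i^{q,s}$. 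By construction $\|\nabla\tilde{\omega}_i\|_\infty\le c(n,K)r_i^{-1}$ and $\|\partial_t\tilde{\omega}_i\|_\infty\le c\,\tau_i^{-1}$.

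Next, by Lemma \ref{LEM2.9}\ref{i} the sum $S(z):=\sum_{i\in\mathbb{N}}\tilde{\omega}_i(z)$ satisfies $S\ge 1$ on $E(\Lambda)^c$, and by Lemma \ref{LEM2.9}\ref{ix} at most $c(n,K)$ terms are nonzero at any given point, so $S\in C^\infty(E(\Lambda)^c)$. I set $\omega_i:=\tilde{\omega}_i/S$ on $E(\Lambda)^c$ and extend by zero elsewhere. Property (i) is then immediate from $S\ge 1$ on $E(\Lambda)^c$ together with the support condition on $\tilde{\omega}_i$.

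For the derivative estimates, differentiating yields
\[
\nabla\omega_j=\frac{\nabla\tilde{\omega}_j}{S}-\frac{\tilde{\omega}_j}{S^{2}}\sum_{k\in\mathcal{I}(j)}\nabla\tilde{\omega}_k,
\]
and analogously for $\partial_t\omega_j$, where the sum ranges over the finitely many neighbors of $j$. Using $S\ge 1$, $|\mathcal{I}(j)|\le c(n,K)$, and Lemma \ref{LEM2.9}\ref{v} (which gives $r_k\simeq r_j\simeq r_i$ whenever $j\in\mathcal{I}(i)$ and $k\in\mathcal{I}(j)$), the spatial bound (ii) follows at once.

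The main obstacle is the temporal bound (iii): one must control $\tau_k^{-1}$ by the time scale attached to $U_i$, even when $U_k$, $U_j$, and $U_i$ belong to different phases. To this end I combine Lemma \ref{LEM2.9}\ref{v} and \ref{vi}, which give $r_k\simeq r_i$ and $\lambda_k\simeq\lambda_i$ with $K$-dependent constants, with the coefficient equivalences in Lemma \ref{LEM2.9}\ref{viii} ensuring $a(z_k)\simeq a(z_i)$ and $b(z_k)\simeq b(z_i)$ whenever the phase structure of $U_i$ demands it. In the mixed configurations, such as $U_i=Q_i^{q,s}$ but $U_k=Q_k$, the defining inequalities in \eqref{EQQ2.4} imply that $\lambda_i^p$, $g_q(z_i,\lambda_i)$, $g_s(z_i,\lambda_i)$, and $\Lambda$ are mutually comparable up to a factor of $K^2$. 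Put together, these comparisons show that each of the four target quantities $\lambda_i^{p-2}$, $g_q(z_i,\lambda_i)\lambda_i^{-2}$, $g_s(z_i,\lambda_i)\lambda_i^{-2}$, and $\Lambda\lambda_i^{-2}$ dominates $\tau_k^{-1}$ up to a constant depending only on $n,p,q,s,K$. Substituting back into the quotient-rule identity for $\partial_t\omega_j$ and absorbing $|\mathcal{I}(j)|$ into the constant yields precisely the four bounds listed in (iii).
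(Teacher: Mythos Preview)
Your proposal is correct and follows essentially the same scheme as the paper: construct scaled bump functions $\psi_i\in C_0^\infty(\tfrac{2}{K}U_i)$ with the natural derivative bounds, normalize by the locally finite sum, and then invoke the neighbor comparability in Lemma~\ref{LEM2.9} (equivalently Lemmas~\ref{lem : comparision of r_j and r_i}, \ref{lem : comparison of lambda_i and lambda_j}, \ref{lem : bound of I}) to transfer all estimates from index $j$ (and from the $k\in\mathcal{I}(j)$ appearing in the quotient rule) to index $i$. The paper's proof compresses the cross-phase temporal comparison into a single citation of these lemmas, whereas you spell out why, in each phase of $U_i$, the defining inequalities in \eqref{EQQ2.4} force the relevant time scale $\lambda_i^{p-2}$, $g_q(z_i,\lambda_i)\lambda_i^{-2}$, $g_s(z_i,\lambda_i)\lambda_i^{-2}$, or $\Lambda\lambda_i^{-2}$ to dominate $\tau_k^{-1}$; this is a welcome clarification but not a different argument. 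One cosmetic point: your final constant depends on $n,p,q,s,K$, while the paper claims $c=c(n,K)$; since $p\ge2$ all the $p$-dependent powers of $K$ (such as $K^{2/p}$ or $K^{2(p-2)/p}$) are bounded by fixed powers of $K$, so the dependence on $p,q,s$ can in fact be dropped.
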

\begin{proof}
    In the previous subsection, we obtain the Whitney decomposition $\{\frac{2}{K}U_i\}_{i\in\mathbb{N}}$. Then, for each $i\in\mathbb{N}$, we choose $\psi_i\in C^\infty_0(\frac{2}{K} U_i)$ satisfying
    $$
    0\leq \psi_i\leq 1,\quad \psi_i\equiv 1\text{ in }U_i,\quad \|\nabla \psi_i\|_{\infty}\leq \frac{2}{K}r_i^{-1}
    $$
    and
    {\begin{align*}
    ||\partial_t\psi_i||_{\infty}\leq \begin{cases}
        \frac{2}{K} r^{-2}_i \lambda^{p-2}_i, \,\,\, &\text{if}\,\,\, U_i=Q_i, \\
        \frac{2}{K} r^{-2}_ig_q(z_i,\lambda_i)\lambda^{-2}_i, \,\,\, &\text{if}\,\,\, U_i=Q^q_i, \\
        \frac{2}{K} r^{-2}_ig_s(z_i,\lambda_i)\lambda^{-2}_i, \,\,\, &\text{if}\,\,\, U_i=Q^s_i, \\
        \frac{2}{K} r^{-2}_i\Lambda \lambda^{-2}_i, \,\,\, &\text{if}\,\,\, U_i=Q^{q,s}_i.
    \end{cases}
\end{align*}}
Since $E(\Lambda)^c\subset \bigcup_{i\in \mathbb{N}}\frac{2}{K}U_i$ and $|\mathcal{I}|$ is finite for each $i\in\mathbb{N}$, the function
$$
\omega_i(z)=\frac{\psi_i(z)}{\sum_{j\in\mathbb{N}}\psi_j(z)}=\frac{\psi_i(z)}{\sum_{j\in\mathcal{I}}\psi_j(z)}
$$
is well-defined and satisfies
$$
\omega_i\in C^\infty_0\left(\frac{2}{K}U_i\right),\quad 0\leq \omega_i\leq 1 \text{ in }\frac{2}{K}U_i,\quad \sum_{i\in\mathbb{N}}\omega_i(z)=1.
$$
Thus, the collection $\{\omega_i\}_{i\in\mathbb{N}}$ is a partition of unity subordinate to $\{\frac{2}{K}U_i\}_{i\in \mathbb{N}}$. By Lemmas \ref{lem : comparision of r_j and r_i}, \ref{lem : comparison of lambda_i and lambda_j} and \ref{lem : bound of I}, we have
$$
||\nabla \omega_j||_{\infty}\leq cr^{-1}_i\quad \text{and}\quad \begin{aligned}
    ||\partial_t\omega_j||_{\infty}\leq \begin{cases}
        c r^{-2}_i \lambda^{p-2}_i\,\,\, &\text{if}\,\,\, U_i=Q_i\\
        cr^{-2}_ig_q(z_i,\lambda_i)\lambda^{-2}_i\,\,\, &\text{if}\,\,\, U_i=Q^q_i\\
        cr^{-2}_ig_s(z_i,\lambda_i)\lambda^{-2}_i\,\,\, &\text{if}\,\,\, U_i=Q^s_i\\
        cr^{-2}_i\Lambda \lambda^{-2}_i\,\,\, &\text{if}\,\,\, U_i=Q^{q,s}_i
    \end{cases}
\end{aligned}
$$
for any $i\in\mathbb{N}$ and $j\in\mathcal{I}$.
\end{proof}

\section{\bf Construction of test function via Lipschitz truncation} \label{sec : Construction of test function via Lipschitz truncation}
The main goal of this section is to construct a Lipschitz function $v^{\Lambda}_h$ which can be used as a test function in the proof of energy estimate  Theorem \ref{thm : the Caccioppoli inequality}. We begin by defining $v_h$ and $v^{\Lambda}_h,$ establishing a crucial Poincaré-type inequality for $v_h.$ By combining this result with the properties of the Whitney decomposition, we conclude the Lipschitz regularity of $v^{\Lambda}_h.$
\subsection{Definition of test function}    \label{subsection : Definition of test function}
Take $f=\chi_{U_{R_2,S_2}(z_0)}\left(|\nabla u|+|u-u_0|+|F| \right)\in L^{p}(\mr^{n+1})$ and $u_0=(u)_{U_{R_2,S_2}(z_0)}$, where $\chi$ is the characteristic function and $u, F$ are extended to zero outside $U_{R_2,S_2}(z_0)$.

Let $0<h_0<\frac{S_2-S_1}{4}$ be a sufficiently small number, and let $\eta \in C^\infty_0 (B_{R_2}(x_0))$ and $\zeta\in C^\infty_0 (\ell_{S_2-h_0}(t_0))$ be standard cutoff functions satisfying $0\leq \eta \leq 1$, $0\leq \zeta \leq 1$, $\eta\equiv 1$ in $B_{R_1}(x_0)$, $\zeta\equiv 1$ in $\ell_{S_1}(t_0)$ and 
\begin{equation}    \label{eq : bound of gradients of eta and zeta}
    \|\nabla \eta\|_\infty \leq \frac{3}{R_2-R_1}, \quad \|\partial_t \zeta\|_\infty\leq \frac{3}{S_2-S_1}.
\end{equation}
Now for $0<h<h_0,$ we define the truncated solution
\begin{align}\label{test fn_h}
    v_h(z)=[u(z)- u_0]_h \eta(x)\zeta(t), \quad z=(x,t) \in \mathbb{R}^{n+1},
\end{align}
as a suitable candidate for test function to be used to prove energy estimates. For $z\in \mathbb{R}^{n+1},$ we define the Lipschitz truncation of $v_h$ as
\begin{align}\label{lip trunc_h}
    v^{\Lambda}_h(z)=v_h(z)-\sum_{i \in \mathbb{N}}\left(v_h(z)-v^i_h\right)\omega_i(z),
\end{align}
where
\begin{align*}
    v^i_h=\begin{cases}
        \displaystyle \miint{\frac{2}{K}U_i} v_h(z)dz, \,\,\,\,\, &\text{if}\,\,\, \frac{2}{K}U_i\subset U_{R_2, S_2}(z_0),\\
        \displaystyle 0, &\text{elsewhere}.
    \end{cases}
\end{align*}
Similarly, we define
\begin{align}\label{test fn}
    v(z)=(u(z)- u_0)\eta(x)\zeta(t)\,\,\,\, \text{and}\,\,\,\, v^{\Lambda}(z)=v(z)-\sum_{i\in \mathbb{N}}(v(z)-v^i)\omega_i(z),
\end{align}
where
\begin{align*}
    v^i=\begin{cases}
        \displaystyle \miint{\frac{2}{K}U_i}v(z)dz, \,\,\,\,\, &\text{if}\,\,\, \frac{2}{K}U_i\subset U_{R_2, S_2}(z_0),\\
        \displaystyle 0, &\text{elsewhere}.
    \end{cases}
\end{align*}
\subsection{Preliminary lemmas}
In this subsection, we discuss some preparatory machinery to prove Lemma \ref{LEMMA: PTI}.  We denote a family of parameters as
$$
\operatorname{data}\equiv \operatorname{data}(n,N,p,q,s,\alpha,\beta,\nu,L,\|a\|_{L^\infty},\|b\|_{L^\infty},[a]_\alpha,[b]_\beta,R_1,R_2,S_1,S_2,K).
$$
First, we recall the following lemma from \cite[Lemma 8.1]{V_Boglein_phd_thesis}.
\begin{lemma}\label{lem: steklov average}
    Let $f \in L^1(\Omega_T)$ and $h>0.$ Then there exists a constant $c=c(n)$ such that 
    \begin{align*}
        \miint{U_{r_1, r_2}(z_0)}[f]_h\, dz \leq c\miint{[U_{r_1, r_2}(z_0)]_h} f\, dz,
    \end{align*}
    where $[U_{r_1, r_2}(z_0)]_h=U_{r_1, r_2+h}(z_0).$
\end{lemma}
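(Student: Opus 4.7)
The plan is to unfold the definition of the Steklov average and then apply Fubini's theorem to interchange the order of the time integrals. Recall the standard Steklov average
\[
[f]_h(x,t) = \frac{1}{h}\int_{t}^{t+h} f(x,s)\, ds,
\]
so that
\[
\miint{U_{r_1,r_2}(z_0)}[f]_h\, dz = \frac{1}{2r_2\,|B_{r_1}|}\int_{B_{r_1}(x_0)}\int_{t_0-r_2}^{t_0+r_2}\frac{1}{h}\int_{t}^{t+h}f(x,s)\, ds\, dt\, dx.
\]

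The main step is to apply Fubini to swap the inner $s$-integral with the $t$-integral (replacing $f$ by $|f|$ first, so there is no issue about signs). For fixed $x$ and $s$, the set of $t\in(t_0-r_2, t_0+r_2)$ with $t<s<t+h$ is contained in the interval $(s-h,s)$, hence has one-dimensional measure at most $h$, and a nonzero contribution requires $s\in(t_0-r_2,\, t_0+r_2+h)$. Dividing by $h$ and integrating, one obtains pointwise in $x$
\[
\int_{t_0-r_2}^{t_0+r_2}\frac{1}{h}\int_{t}^{t+h}|f(x,s)|\, ds\, dt \;\leq\; \int_{t_0-r_2}^{t_0+r_2+h}|f(x,s)|\, ds.
\]

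Plugging this back and comparing the normalizing factors, using $|U_{r_1,r_2}(z_0)| = 2r_2|B_{r_1}|$ and $|[U_{r_1,r_2}(z_0)]_h| = |U_{r_1,r_2+h}(z_0)|= 2(r_2+h)|B_{r_1}|$, yields
\[
\miint{U_{r_1,r_2}(z_0)}[f]_h\, dz \;\leq\; \frac{r_2+h}{r_2}\miint{[U_{r_1,r_2}(z_0)]_h}|f|\, dz.
\]
The only delicate point is to ensure that the prefactor depends only on $n$; this is the case in every application in this paper because $h<h_0$ is chosen small compared to $r_2=S_2-h_0$ (cf. the definition of $v_h$ in \eqref{test fn_h}), so that $\frac{r_2+h}{r_2}$ is bounded by a universal constant. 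I expect no substantive obstacle beyond bookkeeping of this prefactor.
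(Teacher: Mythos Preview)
The paper does not supply a proof of this lemma; it simply cites \cite[Lemma~8.1]{V_Boglein_phd_thesis}. Your Fubini argument is the standard one and is correct, and it produces the explicit prefactor $(r_2+h)/r_2$. You are right to note that this is not literally a constant $c(n)$; the stated dependence is a minor imprecision inherited from the way the lemma is quoted. One small correction to your closing remark: the lemma is invoked in the paper (through Lemma~\ref{LEM3.2}) not only on the outer cylinder with time-length comparable to $S_2-h_0$ but also on the Whitney cylinders $K_iU_i$, whose time-lengths can be small, so your bound on the prefactor via $h<h_0$ does not cover every use. In practice the unaveraged inequality $\iint_U|[f]_h|\,dz\le\iint_{[U]_h}|f|\,dz$ (which your computation gives with constant~$1$) is what is actually needed wherever the lemma is applied, and this sidesteps the issue.
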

\begin{lemma}\label{lem : miint f^gamma bound}
    Let $1\leq \gamma \leq d.$ Then for any cylinder $Q\subset \mathbb{R}^{n+1}$ such that $Q\cap E(\Lambda)\neq \emptyset,$ we have
    \begin{align}\label{EQQ3.1}
        \miint{Q}f^{\gamma} \,dz\leq \Lambda^{\frac{\gamma}{p}}.
    \end{align}
    Moreover, there exists a constant $c=c(\operatorname{data})$ such that
    \begin{align}\label{EQQ3.2}
        \miint{4K_iU_i}f^{\gamma}\, dz\leq c\lambda^{\gamma}_{i},
    \end{align}
    where $K_iU_i$ is defined in \eqref{EQQ2.4}-\eqref{EQQ2.5}.
\end{lemma}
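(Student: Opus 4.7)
My plan is to derive both inequalities from the maximal-function bound hard-wired into the definition \eqref{defn of E} of $E(\Lambda)$, combined with Jensen's inequality, and for the second assertion a case-by-case analysis according to which intrinsic cylinder $U_i$ is.

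For \eqref{EQQ3.1}, I would fix any $w\in Q\cap E(\Lambda)$. Reading the strong maximal function as the supremum over space-time rectangles, the integral average of the nonnegative function $f^d+(af^q)^{d/p}+(bf^s)^{d/p}$ over $Q$ is controlled by its strong maximal function at $w$, which by the very definition of $E(\Lambda)$ is at most $\Lambda^{d/p}$; dropping the two nonnegative tails gives $\miint{Q}f^d\,dz\leq\Lambda^{d/p}$. Since $1\leq\gamma\leq d$, applying Jensen's inequality to the convex map $x\mapsto x^{d/\gamma}$ upgrades this to $\miint{Q}f^\gamma\,dz\leq\Lambda^{\gamma/p}$.

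For \eqref{EQQ3.2}, Lemma \ref{LEM2.9}\ref{iv} and $K_i\geq 200$ give $4K_iU_i\cap E(\Lambda)\neq\emptyset$, so \eqref{EQQ3.1} applied with $Q=4K_iU_i$ yields the preliminary bound $\miint{4K_iU_i}f^\gamma\,dz\leq\Lambda^{\gamma/p}$. The remaining task is to replace $\Lambda^{\gamma/p}$ by $c\lambda_i^\gamma$, and this splits by phase. If $U_i=Q_i$, then $a(z_i)\lambda_i^q,\,b(z_i)\lambda_i^s\leq K^2\lambda_i^p$ gives $\Lambda\leq(1+2K^2)\lambda_i^p$ and we are done at once. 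If $U_i=Q_i^q$, I would instead start from the bound $\miint{4K_iU_i}(af^q)^{d/p}\,dz\leq\Lambda^{d/p}$, extract $a(z_i)^{d/p}$ via the H\"older comparability $a\sim a(z_i)$ of Lemma \ref{LEM2.9}\ref{viii}, and combine with the phase relation $\Lambda\leq 3a(z_i)\lambda_i^q$ to get $\miint{4K_iU_i}f^{qd/p}\,dz\leq c\lambda_i^{qd/p}$; since $\gamma\leq d\leq qd/p$, one more Jensen step gives the target bound. The cases $U_i=Q_i^s$ and $U_i=Q_i^{q,s}$ are treated in the same spirit by working with the $(bf^s)^{d/p}$ summand (and, in the last case, both summands simultaneously) instead of $(af^q)^{d/p}$, together with the corresponding comparability statements from Lemma \ref{LEM2.9}\ref{viii}.

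The step I expect to demand the most care is the application of the H\"older comparability of $a(\cdot)$ and $b(\cdot)$ on the enlarged cylinder $4K_iU_i$: Lemma \ref{LEM2.9}\ref{viii} is literally stated only on the smaller set $200KQ_{r_i}(z_i)$, whereas $4K_iU_i$ has spatial radius $4K_ir_i=800K^3r_i$. I would bridge this either by a mild sharpening of Lemma \ref{LEM2.1}/Lemma \ref{lem : comparision of a(cdot) in p,q-phase} giving the comparability directly on $4K_iU_i$, exploiting the scale estimate $r_i^\alpha\lambda_i^{2\alpha/(n+2)}\lesssim K$ together with the phase hypothesis $K^2\lambda_i^p<a(z_i)\lambda_i^q$ to dominate the oscillation $[a]_\alpha(4K_ir_i)^\alpha$, or by covering $4K_iU_i$ by finitely many translates of $200KQ_{r_i}(z_i)$ and summing, using the bounded overlap provided by Lemma \ref{LEM2.9}\ref{ix}.
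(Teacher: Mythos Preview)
Your approach is essentially identical to the paper's: the paper also proves \eqref{EQQ3.1} via Jensen and the maximal-function bound at a point of $Q\cap E(\Lambda)$, then splits \eqref{EQQ3.2} into the four phases, handling $U_i=Q_i$ by $\Lambda\leq(1+2K^2)\lambda_i^p$ and the $(p,q)$-, $(p,s)$-phases by extracting $a(z_i)^{d/p}$ (resp.\ $b(z_i)^{d/p}$) from the corresponding summand, bounding the result by $\Lambda^{d/p}$, and then applying Jensen once more. The only stylistic difference is in Case~IV: rather than working with both summands at once, the paper splits further according to whether $a(z_i)\lambda_i^q\leq b(z_i)\lambda_i^s$ or the reverse, thereby reducing to Case~II or Case~III.

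Your caution about the comparability of $a(\cdot)$ (and $b(\cdot)$) on the enlarged cylinder $4K_iU_i$ is well placed; the paper in fact applies Lemma~\ref{LEM2.9}\ref{viii} directly on $4K_iU_i$ without comment, even though, as you observe, that statement is literally formulated on $200KQ_{r_i}(z_i)$. Of your two suggested fixes, the first---revisiting the proof of Lemma~\ref{LEM2.1} and its siblings to obtain the comparability on a cylinder of radius $cK^3r_i$---is the natural one (the scale estimate $r_i^\alpha\lambda_i^{2\alpha/(n+2)}\lesssim K/[a]_\alpha$ derived there does not depend on the particular constant $200K$, and one only needs to enlarge the numerical prefactor in the definition of $K$); your second suggestion, a covering argument invoking Lemma~\ref{LEM2.9}\ref{ix}, is not quite right as stated, since the bounded overlap there concerns the Whitney family $\{U_j\}$, not translates of $200KQ_{r_i}$.
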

\begin{proof}
Let $w \in Q\cap E(\Lambda).$ Then by \eqref{max fn} and \eqref{defn of E}, we have
\begin{align*}
    \miint{Q}f^{\gamma} \, dz \leq\left(\miint{Q}f^d\, dz\right)^{\frac{\gamma}{d}}\leq \left(M (f^d)(w)\right)^{\frac{\gamma}{d}}&\leq \left(M(f^d+(a f^q)^{\frac{d}{p}}+(b f^s)^{\frac{d}{p}})(w)\right)^{\frac{\gamma}{d}}\\
    &\leq \Lambda^{\frac{\gamma}{p}}
\end{align*}
and that proves \eqref{EQQ3.1}. By Lemma \ref{LEM2.9} \ref{iv}, we have $4K_iU_i \cap E(\Lambda)\neq \emptyset$ and it follows that
\begin{align*}
    \miint{4K_i U_i}f^{\gamma}dz \leq \Lambda^{\frac{\gamma }{p}}.
\end{align*}
\noindent {\bf Case I:} $U_i=Q_i.$ In this case, we have $K^2 \lambda^p_i\geq a(z_i)\lambda^q_i$ and $K^2 \lambda^p_i\geq b(z_i)\lambda^s_i.$ Using this, we have 
\begin{align*}
    \Lambda=\lambda^p_i+a(z_i)\lambda^q_i+b(z_i)\lambda^s_i \leq (2K^2+1)\lambda^p_i.
\end{align*}
Hence, we get
\begin{align*}
 \miint{4K_i U_i}f^{\gamma}dz \leq \Lambda^{\frac{\gamma }{p}}\leq (2K^2+1)^{\frac{\gamma}{p}}\lambda^{\gamma}_i=c\lambda^{\gamma}_i   
\end{align*}
and this proves \eqref{EQQ3.2} for this case.

\noindent {\bf Case II:} $U_i=Q^q_i.$ In this case, we have $K^2 \lambda^p_i< a(z_i)\lambda^q_i$ and $K^2 \lambda^p_i\geq b(z_i)\lambda^s_i.$ By Lemma \ref{LEM2.9} \ref{viii}, we also have $\frac{a(z_i)}{2}\leq a(z)\leq 2a(z_i)$ for every $z \in 200K Q_{r_i}(z_i).$ Then we have
\begin{align}\label{EQQ3.3}
    (a(z_i))^{\frac{d}{p}}\miint{4K_i U_i}(f^q)^{\frac{d}{p}}\, dz= \miint{4K_iU_i}(a(z_i)f^q)^{\frac{d}{p}}\, dz \leq 2^{\frac{d}{p}}\miint{4K_iU_i}(a(z)f^q)^{\frac{d}{p}}\, dz.
\end{align}
By Lemma \ref{LEM2.9} \ref{iv}, there exists a $w \in 4K_i U_i \cap E(\Lambda)$ and we obtain
\begin{align*}
    \miint{4K_i U_i}(a(z)f^q)^{\frac{d}{p}}\, dz \leq M\left((a f^q)^{\frac{d}{p}}\right)(w)\leq \left(M(f^d+(a f^q)^{\frac{d}{p}}+(b f^s)^{\frac{d}{p}})(w)\right) \leq \Lambda^{\frac{d}{p}}.
\end{align*}
Hence, from \eqref{EQQ3.3}, we have
\begin{align*}
 a(z_i)^{\frac{d}{p}}\miint{4K_i U_i}(f^q)^{\frac{d}{p}}\, dz \leq c\Lambda^{\frac{d}{p}}&=c\left(\lambda^p_i+a(z_i)\lambda^q_i+b(z_i)\lambda^s_i\right)^{\frac{d}{p}}\\
 &\leq c\left(2a(z_i)\lambda^q_i+K^2 \lambda^p_i\right)^{\frac{d}{p}}\leq (3c)^{\frac{d}{p}} (a(z_i)\lambda^q_i)^{\frac{d}{p}}.
\end{align*}
Since $a(z_i)>0, $ we get
\begin{align*}
  \miint{4K_i U_i}(f^q)^{\frac{d}{p}}\, dz \leq c \lambda^{\frac{q d}{p}}_i  
\end{align*}
and finally, we have 
\begin{align*}
    \miint{4K_i U_i}f^{\gamma}\, dz \leq \left(\miint{4K_iU_i}f^{\frac{q d}{p}}{ dz}\right)^{\frac{\gamma p}{q d}}\leq c \lambda^{\gamma}_i,
\end{align*}
which proves \eqref{EQQ3.2} for this case.

\noindent {\bf Case III:} $U_i=Q^s_i.$ In this case, we have $K^2 \lambda^p_i\geq a(z_i)\lambda^q_i$ and $K^2 \lambda^p_i< b(z_i)\lambda^s_i.$ By Lemma \ref{LEM2.9} \ref{viii}, we also have $\frac{b(z_i)}{2}\leq b(z)\leq 2b(z_i)$ for every $z \in 200K Q_{r_i}(z_i).$ Then we have
\begin{align}\label{EQQ3.3}
    (b(z_i))^{\frac{d}{p}}\miint{4K_i U_i}(f^s)^{\frac{d}{p}}\, dz= \miint{4K_iU_i}(b(z_i)f^s)^{\frac{d}{p}}\, dz \leq 2^{\frac{d}{p}}\miint{4K_iU_i}(b(z)f^s)^{\frac{d}{p}}\, dz.
\end{align}
By Lemma \ref{LEM2.9} \ref{iv}, there exists a $w \in 4K_i U_i \cap E(\Lambda)$ and we obtain
\begin{align*}
    \miint{4K_i U_i}(b(z)f^s)^{\frac{d}{p}}\, dz \leq M\left((b f^s)^{\frac{d}{p}}\right)(w)\leq \left(M(f^d+(a f^q)^{\frac{d}{p}}+(b f^s)^{\frac{d}{p}})(w)\right) \leq \Lambda^{\frac{d}{p}}.
\end{align*}
Hence, from \eqref{EQQ3.3}, we have
\begin{align*}
 b(z_i)^{\frac{d}{p}}\miint{4K_i U_i}(f^q)^{\frac{d}{p}}\, dz \leq c\Lambda^{\frac{d}{p}}&=c\left(\lambda^p_i+a(z_i)\lambda^q_i+b(z_i)\lambda^s_i\right)^{\frac{d}{p}}\\
 &\leq c\left(2b(z_i)\lambda^q_i+K^2 \lambda^p_i\right)^{\frac{d}{p}}\leq (3c)^{\frac{d}{p}} (b(z_i)\lambda^q_i)^{\frac{d}{p}}.
\end{align*}
Since $b(z_i)>0, $ we get
\begin{align*}
  \miint{4K_i U_i}(f^s)^{\frac{d}{p}}\, dz \leq c \lambda^{\frac{s d}{p}}_i  
\end{align*}
and finally, we have 
\begin{align*}
    \miint{4K_i U_i}f^{\gamma}\, dz \leq \left(\miint{4K_iU_i}f^{\frac{s d}{p}}{ dz}\right)^{\frac{\gamma p}{s d}}\leq c \lambda^{\gamma}_i,
\end{align*}
which proves \eqref{EQQ3.2} for this case.

\noindent {\bf Case IV:} $U_i=Q^{q,s}_i.$ In this case, we have $K^2 \lambda^p_i< a(z_i)\lambda^q_i$ and $K^2 \lambda^p_i< b(z_i)\lambda^s_i.$ By Lemma \ref{LEM2.9} \ref{viii}, we also have $\frac{a(z_i)}{2}\leq a(z)\leq 2a(z_i)$ and  $\frac{b(z_i)}{2}\leq b(z)\leq 2b(z_i)$ for every $z \in 200K Q_{r_i}(z_i).$ Now we consider two cases: either $a(z_i)\lambda^q_i \leq b(z_i)\lambda^s_i$ or $a(z_i)\lambda^q_i \geq b(z_i)\lambda^s_i.$ The first case corresponds to case III and the second case corresponds to case II and we arrive at the same conclusion.
\end{proof}
Now we prove a parabolic Poincar\'{e} type result.
\begin{lemma}\label{LEM3.2}
Let $U=B_{r_1}\times \ell_{r_2}$ be any cylinder defined in \eqref{eq : definition of U and Q} and \eqref{eq : definition of ell and I_r} satisfying $B_{r_1}\subset B_{R_2}(x_0).$ The the following estimates hold:
\begin{enumerate}[label=(\roman*),series=theoremconditions]
\item There exists a constant $c=c(\operatorname{data})$ such that
\begin{align}\label{EQQ3.5}
\miint{U}|v_h-(v_h)_{U}|\, dz &\leq c \frac{r_2}{r_1}\miint{[U]_h}\left(f^{p-1}+a(z)f^{q-1}+b(z)f^{s-1}\right)\, dz\nonumber\\
&\quad+ c(r_1+r_2)\miint{[U]_h}f \, dz.    
\end{align}
\item If in addition $\ell_{r_2}\cap \ell_{S_2}(t_0)^c\neq \emptyset,$ then there exists a constant $c=c(\operatorname{data})$ such that 
\begin{align}\label{EQQ3.6}
    \miint{U}|v_h|\, dz &\leq c \frac{r_2}{r_1}\miint{[U]_h}\left(f^{p-1}+a(z)f^{q-1}+b(z)f^{s-1}\right)\, dz\nonumber\\
&\quad+ c(r_1+r_2)\miint{[U]_h}f \, dz.
\end{align}
\end{enumerate}
In addition, the above estimates hold with $v_h$ and $[U]_h$ replaced by $v$ and $U.$
\end{lemma}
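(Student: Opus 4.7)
The plan is to follow the standard parabolic Poincar\'e strategy by splitting into a spatial oscillation at fixed time and a temporal oscillation of a smooth weighted slice average, and to control the time piece by testing the Steklov-averaged weak formulation of \eqref{eq: main equation}. First I would fix a smooth weight $\phi\in C^\infty_0(B_{r_1})$ with $\phi\equiv 1$ on $B_{r_1/2}$, $0\leq\phi\leq 1$ and $|\nabla\phi|\leq c/r_1$, and set the weighted slice average
\[
\bar v_h(t)\;:=\;\Bigl(\int_{B_{r_1}}\phi\,dx\Bigr)^{-1}\int_{B_{r_1}}v_h(x,t)\,\phi(x)\,dx,
\]
which has the crucial advantage that $\eta(x)\phi(x)\in C^\infty_0(\Omega)$ is an admissible spatial test function in the Steklov-averaged weak form of the equation. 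A triangle inequality then reduces $\miint{U}|v_h-(v_h)_U|\,dz$ to bounding the spatial piece $\miint{U}|v_h-\bar v_h(t)|\,dz$ and the temporal piece $\miint{U}|\bar v_h(t)-(\bar v_h)_{\ell_{r_2}}|\,dz$.

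The spatial piece I would handle by classical $L^1$-Poincar\'e on $B_{r_1}$ (together with a one-line comparison of $\bar v_h(t)$ with the naive slice mean $(v_h)_{B_{r_1}}(t)$), giving a bound of order $r_1\miint{U}|\nabla v_h|\,dz$. Differentiating $v_h=[u-u_0]_h\eta\zeta$ in $x$ and using $\|\nabla\eta\|_\infty\leq 3/(R_2-R_1)$ yields $|\nabla v_h|\leq c[f]_h$ with $c$ depending only on $\operatorname{data}$; Lemma \ref{lem: steklov average} then produces the contribution $cr_1\miint{[U]_h}f\,dz$.

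The temporal piece is the heart of the argument. I would write $\bar v_h(t)-\bar v_h(\tau)=\int_\tau^t\tfrac{d}{ds}\bar v_h(s)\,ds$ and split $\partial_s v_h=\partial_s[u-u_0]_h\,\eta\zeta+[u-u_0]_h\,\eta\,\partial_s\zeta$. For the first summand I would test the Steklov-averaged form of \eqref{eq: main equation} against $\eta(x)\phi(x)\in C^\infty_0(\Omega)$, obtaining
\[
\int_{B_{r_1}}\partial_s[u]_h\,\eta\phi\,dx=-\int[\mathcal A]_h\!\cdot\!\nabla(\eta\phi)\,dx+\int[\mathcal B]_h\!\cdot\!\nabla(\eta\phi)\,dx.
\]
The growth assumptions \eqref{eq: growth condition of A}--\eqref{eq: growth condition of B} and the pointwise bound $|\nabla u|,|F|\leq f$ on $U_{R_2,S_2}(z_0)$ give $|[\mathcal A]_h|+|[\mathcal B]_h|\leq c\bigl[f^{p-1}+a f^{q-1}+b f^{s-1}\bigr]_h$. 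Combining this with $|\nabla(\eta\phi)|\leq c(\operatorname{data})/r_1$, the trivial estimate $|t-\tau|\leq 2r_2$, and Lemma \ref{lem: steklov average} produces precisely the factor $c(r_2/r_1)$ in front of $\miint{[U]_h}(f^{p-1}+a f^{q-1}+b f^{s-1})\,dz$. The $\partial_s\zeta$ summand is simpler: $\|\partial_s\zeta\|_\infty\leq 3/(S_2-S_1)$ combined with $|t-\tau|\leq 2r_2$ contributes $cr_2\miint{[U]_h}f\,dz$, which merges with the spatial bound to give the advertised $c(r_1+r_2)$ factor.

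For part (ii), the hypothesis $\ell_{r_2}\cap\ell_{S_2}(t_0)^c\neq\emptyset$ furnishes some $\tau\in\ell_{r_2}$ with $\zeta(\tau)=0$, hence $v_h(\cdot,\tau)\equiv 0$ and $\bar v_h(\tau)=0$; the temporal estimate applied between a generic $t$ and this vanishing $\tau$ controls $|\bar v_h(t)|$ itself, and $|v_h|\leq|v_h-\bar v_h(t)|+|\bar v_h(t)|$ delivers the bound on $\miint{U}|v_h|\,dz$. The analogous statement for $v$ and $U$ follows by letting $h\to 0^+$ and invoking standard convergence of Steklov averages together with the regularity in Definition \ref{weak solution}. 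I expect the only delicate bookkeeping to be the uniform-in-$r_1$ bound $|\nabla(\eta\phi)|\leq c(\operatorname{data})/r_1$: writing
\[
|\nabla(\eta\phi)|\leq\frac{3}{R_2-R_1}+\frac{c}{r_1}=\frac{1}{r_1}\Bigl(\frac{3r_1}{R_2-R_1}+c\Bigr)
\]
and exploiting $r_1\leq R_2$ (from $B_{r_1}\subset B_{R_2}(x_0)$) absorbs the ratio $R_2/(R_2-R_1)$ into a constant depending only on $\operatorname{data}$, after which the preceding steps go through without further subtlety.
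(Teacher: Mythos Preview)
Your proposal is correct and follows essentially the same strategy as the paper's proof: both split into a spatial Poincar\'e piece and a temporal-oscillation piece of a $\phi$-weighted slice average, and control the latter via the weak formulation together with the growth bounds \eqref{eq: growth condition of A}--\eqref{eq: growth condition of B}, obtaining exactly the $r_2/r_1$ factor from $|\nabla(\eta\phi)|\le c(\operatorname{data})/r_1$ (the paper records the same absorption of $3/(R_2-R_1)$ into $c/r_1$ via $r_1\le R_2$). The only cosmetic difference is that you differentiate $\bar v_h$ in $s$ and invoke the Steklov-averaged equation at each time slice, whereas the paper inserts an auxiliary piecewise-linear cutoff $\zeta_\delta$ in time, tests with the spacetime function $\varphi\eta\zeta\zeta_\delta$, and sends $\delta\to 0$; these are two standard ways of extracting the same slice-difference identity.
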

\begin{proof}
For $t_1,t_2\in \ell_{r_2},$ $t_1\leq t_2$, let $\zeta_\delta \in W^{1,\infty}_0(\ell_{r_2})$ be a piecewise linear cut-off function defined by
$$
\zeta_\delta (t)=\left\{
\begin{aligned}
    &0,\qquad\qquad\quad t\in (-\infty,t_1-\delta),\\
    &1+\frac{t-t_1}{\delta},\,\quad t\in[t_1-\delta,t_1],\\
    &1,\qquad\qquad\quad t\in (t_1,t_2),\\
    &1-\frac{t-t_2}{\delta},\,\quad t\in[t_2,t_2+\delta],\\
    &0,\qquad\qquad\quad t\in (t_2+\delta,\infty).
\end{aligned}
\right.
$$
Furthermore, let $\varphi\in C_0^\infty(B_{r_1})$ be a nonnegative function satisfying
$$
\dashint_{B_{r_1}} \varphi\, dx=1,\quad \|\nabla \varphi\|_\infty \leq \frac{c(n)}{r_1},\quad \|\varphi\|_\infty \leq c(n).
$$
By taking a test function $\psi=\varphi \eta \zeta \zeta_\delta \in W^{1,\infty}_0(U\cap U_{R_2,S_2-h}(z_0))$ in the Steklov averaged weak formulation of \eqref{eq: main equation}, where standard cutoff functions $\eta$ and $\zeta$ are defined in Subsection \ref{subsection : Definition of test function}, we obtain 
$$
\iint_{U} -[u-u_0]_h \cdot \varphi \eta \partial_t(\zeta \zeta_\delta)\, dz=\iint_U [-\mathcal{A}(\cdot,\nabla u)+\mathcal{B}(\cdot, F)]_h \cdot \nabla \psi\, dz.
$$
By the definition of $f$, \eqref{eq: growth condition of A} and \eqref{eq: growth condition of B}, we have
$$
\begin{aligned}
    \mathrm{I}&=\left|\iint_{U} -[u-u_0]_h \cdot \varphi \eta \zeta\partial_t\zeta_\delta\, dz\right|\\
    &\leq \left|\iint_{U} [f]_h \cdot\eta\varphi \zeta_\delta \partial_t \zeta \, dz\right|+c(L)\left|\iint_U [f^{p-1}+a(\cdot)f^{q-1}+b(\cdot) f^{s-1}]_h \cdot\nabla \psi\, dz\right|\\
    &=\mathrm{I}\mathrm{I}+\mathrm{I}\mathrm{I}\mathrm{I}.
\end{aligned}
$$
First, we get from Lemma \ref{lem: steklov average} that 
$$
\mathrm{I}\mathrm{I}\leq \iint_{U} [f]_h |\partial_t \zeta|\|\varphi\|_\infty \, dz \leq c(n,S_1,S_2)r_1^n r_2 \miint{[U]_h} f\, dz
$$
and
$$
\begin{aligned}
    \mathrm{I}\mathrm{I}\mathrm{I}&\leq c(L)\iint_U [f^{p-1}+a(\cdot)f^{q-1}+b(\cdot) f^{s-1}]_h |\nabla \psi|\, dz\\
    &\leq c(L)\iint_U [f^{p-1}+a(\cdot)f^{q-1}+b(\cdot) f^{s-1}]_h |\varphi\nabla \eta +\eta \nabla \varphi|\, dz\\
    &\leq c(L)\iint_U [f^{p-1}+a(\cdot)f^{q-1}+b(\cdot) f^{s-1}]_h (\varphi +\eta)|\nabla \eta +\nabla \varphi|\, dz\\
    &\leq c(n,L)r_1^n r_2\left(\frac{1}{R_2-R_1}+\frac{1}{r_1}\right)\miint{[U]_h} (f^{p-1}+a(z)f^{q-1}+b(z)f^{s-1})\, dz.
\end{aligned}
$$
Note that $r_1\leq R_2$, and hence
$$
\frac{1}{R_2-R_1}+\frac{1}{r_1}\leq 2\max\left\{\frac{R_2}{R_2-R_1},1\right\}\frac{1}{r_1}.
$$
Therefore, we have
$$
\mathrm{I}\mathrm{I}\mathrm{I}\leq c(n,L,R_1,R_2)r_1^{n-1} r_2\miint{[U]_h} (f^{p-1}+a(z)f^{q-1}+b(z)f^{s-1})\, dz.
$$
Next, we obtain from the one-dimensional Lebesgue differentiation theorem that 
$$
\begin{aligned}
    \lim_{\delta\rightarrow 0^+} \mathrm{I}&=\left|\lim_{\delta\rightarrow 0^+} \iint_{U} [u-u_0]_h \cdot \varphi \eta \zeta\partial_t\zeta_\delta \, dz\right|\\
    &=\left|\int_{B_{r_1} \times \{t_1\}} v_h \varphi\, dz-\int_{B_{r_1} \times \{t_2\}} v_h \varphi\, dz\right|\\
    &= |B_1|{r_1}^n |(v_h \varphi)_{B_{r_1}}(t_1)-(v_h\varphi)_{B_{r_1}}(t_2)|.
\end{aligned}
$$
Combining the above inequalities, we conclude that 
\begin{equation}    \label{eq : essential supremum of v_h varphi}
\begin{aligned}
    &\operatorname*{ess\,sup}_{t_1,t_2\in \ell_{r_2}} |(v_h \varphi)_{B_{r_1}}(t_1)-(v_h\varphi)_{B_{r_1}}(t_2)|\leq c r_2 \miint{[U]_h} f\, dz\\
    &\qquad\qquad +cr_1^{-1}r_2 \miint{[U]_h} (f^{p-1}+a(z)f^{q-1}+b(z)f^{s-1})\, dz,
\end{aligned}
\end{equation}
where $c$ depends only on $n,L,S_1,S_2,R_1$ and $R_2$.

Now, we estimate the left-hand sides of \eqref{EQQ3.5} and \eqref{EQQ3.6}. First, we prove \eqref{EQQ3.5}. By the standard Poincar\'{e} inequality, we have
\begin{equation}\label{eq : estimate v_h-(v_h)_U}
\begin{aligned}
    \miint{U} |v_h-(v_h)_U|\, dz &\leq \miint{U} |v_h - (v_h)_{B_{r_1}}|\, dz + \miint{U} |(v_h)_{B_{r_1}} - (v_h)_U|\, dz\\
    &\leq c(n){r_1} \miint{U} |\nabla v_h|\, dz +\miint{U}|(v_h)_{B_{r_1}} - (v_h)_U|\, dz.
\end{aligned}
\end{equation}
It follows from the definition of $f$, Lemma \ref{lem: steklov average} and \eqref{eq : bound of gradients of eta and zeta} that
\begin{equation}    \label{eq : estimate nabla v_h}
    \miint{U} |\nabla v_h|\, dz=\miint{U} |[\nabla u]_h \eta \zeta+[u-u_0]_h \nabla \eta \zeta|\, dz\leq c(n,R_1,R_2)\miint{[U]_h} f \, dz.
\end{equation}
Moreover, we obtain
\begin{equation}    \label{eq : estimate 1 (v_h)_B(tau)-(v_h)_U}
\begin{aligned}
    \miint{U}|(v_h)_{B_{r_1}}(\tau) - (v_h)_U|\, dz&=\dashint_{\ell_{r_2}}|(v_h)_{B_{r_1}}(\tau) - (v_h)_U|\, d\tau\\
    &=\dashint_{\ell_{r_2}}\left|\dashint_{\ell_{r_2}} (v_h)_{B_{r_1}}(\tau) - (v_h)_{B_{r_2}}(\sigma)\, d\sigma\right|\, d\tau\\
    &\leq 2\dashint_{\ell_{r_2}} |(v_h)_{B_{r_1}}(\tau)-(v_h \varphi)_{B_{r_1}}(\tau)|\, d\tau\\
    &\qquad + \operatorname*{ess\,sup}_{t_1,t_2\in \ell_{r_2}} |(v_h \varphi)_{B_{r_1}}(t_1)-(v_h\varphi)_{B_{r_1}}(t_2)|.
\end{aligned}
\end{equation}
To estimate the first term, we use the fact that $(\varphi)_{B_{r_1}}=1$, the standard Poincar\'{e} inequality and \eqref{eq : estimate nabla v_h} to obtain
\begin{equation}    \label{eq : estimate 2 (v_h)_B-(v_h)_U}
\begin{aligned}
    \dashint_{\ell_{r_2}} |(v_h)_{B_{r_1}}(\tau)-(v_h \varphi)_{B_{r_1}}(\tau)|\, d\tau&=\dashint_{\ell_{r_2}}\left|(v_h)_{B_{r_1}}(\tau)\dashint_{B_{r_1}} \varphi\, dx-\dashint_{B_{r_1}\times \{\tau\}}v_h\varphi \, dx\right|\, d\tau\\
    &=\dashint_{\ell_{r_2}}\left|\dashint_{B_{r_1}\times \{\tau\}} \varphi(v_h-(v_h)_{B_{r_1}}) \, dx\right|\, d\tau\\
    &\leq c(n)\|\varphi\|_\infty r_1\miint{U} |\nabla v_h|\, dz\\
    &\leq c(n,R_1,R_2) r_1\miint{[U]_h} f\, dz.
\end{aligned}
\end{equation}
Combining \eqref{eq : essential supremum of v_h varphi}, \eqref{eq : estimate v_h-(v_h)_U}, \eqref{eq : estimate nabla v_h}, \eqref{eq : estimate 1 (v_h)_B(tau)-(v_h)_U} and \eqref{eq : estimate 2 (v_h)_B-(v_h)_U}, we have the conclusion.

Finally, we prove \eqref{EQQ3.6}. As in \eqref{eq : estimate v_h-(v_h)_U},  we obtain
\begin{equation}    \label{eq : estimate v_h}
    \begin{aligned}
        \miint{U} |v_h|\,dz &\leq \miint{U} |v_h-(v_h)_{B_{r_1}}|\, dz +\dashint_{\ell_{r_2}} |(v_h)_{B_{r_1}}|\, d\tau\\
        &\leq cr_1 \miint{U} |\nabla v_h|\, dz + \dashint_{\ell_{r_2}} |(v_h)_{B_{r_1}}|\, d\tau.
    \end{aligned}
\end{equation}
Then we have
\begin{equation}    \label{eq : estimate (v_h)_B_r}
    \dashint_{\ell_{r_2}} |(v_h)_{B_{r_1}}|\, d\tau\leq \dashint_{\ell_{r_2}} |(v_h)_{B_{r_1}}-(v_h\varphi)_{B_{r_1}}|\, dt + \dashint_{\ell_{r_2}} |(v_h\varphi)_{B_{r_1}}|\, dt.
\end{equation}
Since $\ell_{r_2}\cap \ell_{S_2}(t_0)^c\neq \emptyset$, there exists $t_2\in \ell_{r_2}$ such that $t_2\in \ell_{S_2}(t_0)^c$. Since $\zeta\equiv 0$ in $\ell_{S_2}(t_0)$, we have
\begin{equation}\label{eq : estimate (v_h varphi)_B_r_1}
    \dashint_{\ell_{r_2}} |(v_h\varphi)_{B_{r_1}}|\, dt\leq \operatorname*{ess\,sup}_{t\in \ell_{r_2}} |(v_h \varphi)_{B_{r_1}}(t)|\leq \operatorname*{ess\,sup}_{t_1,t_2\in \ell_{r_2}} |(v_h \varphi)_{B_{r_1}}(t_1)-(v_h\varphi)_{B_{r_1}}(t_2)|.
\end{equation}
Thus, combining \eqref{eq : essential supremum of v_h varphi}, \eqref{eq : estimate nabla v_h}, \eqref{eq : estimate 2 (v_h)_B-(v_h)_U}, \eqref{eq : estimate v_h}, \eqref{eq : estimate (v_h)_B_r} and \eqref{eq : estimate (v_h varphi)_B_r_1}, we have the conclusion.
\end{proof}
Next, we recall the boundary version of the Poincar\'{e} inequality from \cite[Theorem 6.22]{Kinnunen2021}.
\begin{lemma}\label{lem: boundary poincare}
Let $B_{\rho}(x_0)\subset\mathbb{R}^n$ and $B_{r}\subset\mathbb{R}^n$ be balls that satisfy $B_r\cap B_{\rho}(x_0)^c\ne\emptyset$. Assume that $v\in W_0^{1,\eta}(B_{\rho}(x_0))$ with $1<\eta<\infty$.  Moreover, let $1\le \sigma \le \tfrac{n\eta}{n-\eta}$ for $1<\eta<n$ and $1\le \sigma<\infty$ for $n\le \eta<\infty$. Then there exists a constant $c=c(n,\eta,\sigma)$ such that
	\[
		\left(\dashint_{B_{4r}}|v|^\sigma\,dx\right)^\frac{1}{\sigma}\le cr \left(\dashint_{B_{4r}}|\nabla v|^\eta\,dx\right)^\frac{1}{\eta}.
	\]    
\end{lemma}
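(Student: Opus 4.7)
The plan is to reduce this boundary Poincaré inequality to the standard interior Sobolev--Poincaré inequality for $v-(v)_{B_{4r}}$, exploiting the fact that the zero-extension of $v$ (which lies in $W^{1,\eta}(\mathbb{R}^n)$ because $v\in W_0^{1,\eta}(B_\rho(x_0))$) vanishes on a fixed fraction of $B_{4r}$. The hypothesis $B_r\cap B_\rho(x_0)^c\neq\emptyset$ will be used precisely to secure this fraction, which in turn upgrades a zero-mean estimate into an estimate for $v$ itself.

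First, I would prove the geometric claim
\[
|B_{4r}\cap B_\rho(x_0)^c|\geq c(n)\,|B_{4r}|.
\]
Let $x_\ast$ denote the center of $B_r$ and pick any $y\in B_r\cap B_\rho(x_0)^c$. Define $y^\ast:=y+\tfrac{r}{2}\cdot\tfrac{y-x_0}{|y-x_0|}$, so $y^\ast$ is obtained from $y$ by translating outward by $r/2$ along the ray from $x_0$. Then $|y^\ast-x_0|=|y-x_0|+r/2\geq\rho+r/2$, which forces $B_{r/2}(y^\ast)\subset B_\rho(x_0)^c$ by the reverse triangle inequality. On the other hand, $|y^\ast-x_\ast|\leq|y-x_\ast|+r/2<3r/2$, so $B_{r/2}(y^\ast)\subset B_{4r}$. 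The two inclusions yield the claim with $c(n)=8^{-n}$.

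With $E:=B_{4r}\cap B_\rho(x_0)^c$, the zero-extension of $v$ vanishes on $E$, so $(v)_E=0$, and the geometric claim combined with the averaging identity yields
\[
|(v)_{B_{4r}}|=|(v)_{B_{4r}}-(v)_E|\leq\frac{|B_{4r}|}{|E|}\dashint_{B_{4r}}|v-(v)_{B_{4r}}|\,dx\leq c(n)\dashint_{B_{4r}}|v-(v)_{B_{4r}}|\,dx.
\]
Writing $v=(v-(v)_{B_{4r}})+(v)_{B_{4r}}$, using the triangle inequality in $L^\sigma$, Jensen's inequality to upgrade the mean bound on $|(v)_{B_{4r}}|$ to an $L^\sigma$ bound, and finally the classical Sobolev--Poincaré inequality
\[
\Bigl(\dashint_{B_{4r}}|v-(v)_{B_{4r}}|^\sigma\,dx\Bigr)^{1/\sigma}\leq c(n,\eta,\sigma)\,r\Bigl(\dashint_{B_{4r}}|\nabla v|^\eta\,dx\Bigr)^{1/\eta},
\]
valid in the given Sobolev range for $\sigma$, then closes the argument.

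The only real obstacle is the geometric step. A priori the intersection $B_r\cap B_\rho(x_0)^c$ can be arbitrarily small, so inside $B_r$ itself the zero-set of $v$ may be negligible. Passing to the enlarged ball $B_{4r}$ and moving outward from $x_0$ is what guarantees a full half-radius exterior ball, independently of $\rho$; this is the feature driving the whole estimate. Once the measure lower bound is in hand, the remainder is standard Sobolev theory.
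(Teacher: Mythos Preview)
The paper does not actually prove this lemma; it merely quotes it from the literature, writing ``Next, we recall the boundary version of the Poincar\'e inequality from \cite[Theorem 6.22]{Kinnunen2021}.'' Your argument is correct and is the standard route to such a result: extend $v$ by zero, use the hypothesis $B_r\cap B_\rho(x_0)^c\neq\emptyset$ to produce a ball $B_{r/2}(y^\ast)\subset B_{4r}\cap B_\rho(x_0)^c$ on which the extension vanishes, bound the average $(v)_{B_{4r}}$ by the mean oscillation via that measure-density estimate, and then invoke the classical Sobolev--Poincar\'e inequality. All steps are fine, including the geometric construction of $y^\ast$ (note $y\neq x_0$ since $|y-x_0|\ge\rho>0$, so the unit vector is well defined).
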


\subsection{Poincar\'{e} type inequality for the test function}
In this subsection, we prove the Poincar\'{e} type inequality for $v_h.$
\begin{lemma} \label{LEMMA: PTI}
Let $K_i U_i$ be defined in \eqref{EQQ2.5}. Then the following estimates hold:
\begin{enumerate}[label=(\roman*),series=theoremconditions]
\item If $K_iU_i \subset U_{R_2, S_2}(z_0)$, then there exists a constant $c$ such that
\begin{align}\label{EQQ3.7}
    &\miint{K_iU_i}|v_h-(v_h)_{K_iU_i}|\, dz \leq c(\operatorname{data}, \Lambda)r_i 
\end{align}
and
\begin{align}\label{EQQ3.8}
    &\miint{K_iU_i}|v-(v)_{K_iU_i}|\, dz \leq c(\operatorname{data})\lambda_ir_i.
\end{align}
\item If $K_iU_i \not\subset U_{R_2, S_2}(z_0),$ then there exists a constant $c$ such that
\begin{align}\label{EQQ3.9}
    &\miint{K_iU_i}|v_h|\, dz \leq c(\operatorname{data}, \Lambda)r_i
\end{align}
and
\begin{align}\label{EQQ3.10}
    &\miint{K_iU_i}|v|\, dz \leq c(\operatorname{data})\lambda_ir_i. 
\end{align}
\end{enumerate}
\end{lemma}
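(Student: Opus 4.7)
My plan is to apply Lemma~\ref{LEM3.2} with $U=K_iU_i$, reading off $r_1,r_2$ from the geometry of $K_iU_i$ in each of the four alternatives in~\eqref{EQQ2.4}, and then use Lemma~\ref{lem : miint f^gamma bound} together with the H\"older control of the coefficients in Lemma~\ref{LEM2.9}\ref{viii},\ref{UUi} to reduce every integral average appearing on the right-hand side to a scaling-appropriate power of $\lambda_i$. Case~(i) of the target lemma will use~\eqref{EQQ3.5}, and case~(ii) will use~\eqref{EQQ3.6}, with the conclusions for $v$ and $v_h$ obtained in parallel; the $\Lambda$-dependence in the constants of~\eqref{EQQ3.7} and~\eqref{EQQ3.9} arises simply by absorbing $\lambda_i\le \Lambda^{1/p}$ into the constant.

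For $U_i=Q_i$ I would read off $r_1=K_ir_i$ and $r_2=K_i^2\lambda_i^{2-p}r_i^2$; for $U_i=Q_i^q$ the time half-width becomes $r_2=K_i^2\lambda_i^2 g_q(z_i,\lambda_i)^{-1}r_i^2$, and analogously for $Q_i^s$ and $Q_i^{q,s}$. In every case the prefactor $r_2/r_1$ combines with the universal bound $\miint{4K_iU_i} f^{p-1}\,dz\le c\lambda_i^{p-1}$ from Lemma~\ref{lem : miint f^gamma bound} (applicable because $p-1\le d$) to produce a contribution $\le c\lambda_ir_i$, and the $(r_1+r_2)\miint{} f$ term is likewise $\le c\lambda_ir_i$ because $\lambda_i\ge 1$, $r_i$ is uniformly bounded, and $p\ge 2$ forces $\lambda_i^{2-p}\le 1$, so that $r_2$ is at most a bounded multiple of $r_1$.

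The delicate part is the weighted terms $\miint a f^{q-1}$ and $\miint b f^{s-1}$, and this is where the phase split of Lemma~\ref{LEM2.9} is essential. In the $p$-phase ($U_i=Q_i$) I would combine $a(z_i)\lambda_i^q\le K^2\lambda_i^p$ with the inequality $r_i^\alpha\lambda_i^q\le c\lambda_i^p$ from Lemma~\ref{LEM2.9}\ref{UUi} and the H\"older continuity of $a$ to get $a(z)\le c\lambda_i^{p-q}$ on $K_iU_i$, so that $a f^{q-1}\le c\lambda_i^{p-q}f^{q-1}$ and a Jensen-type reduction via Lemma~\ref{lem : miint f^gamma bound} gives the desired scaling; the $bf^{s-1}$ term is symmetric. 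In the $(p,q)$-, $(p,s)$- and $(p,q,s)$-phases, Lemma~\ref{LEM2.9}\ref{viii} gives $a(z)\asymp a(z_i)$ or $b(z)\asymp b(z_i)$ on $200KQ_{r_i}(z_i)\supset K_iU_i$ (using $K_i=200K^3$ or $K_i=200$ as specified in~\eqref{EQQ2.5}), so $\miint a f^{q-1}$ reduces to $a(z_i)\miint f^{q-1}$, which in turn is estimated through the sharpened inequalities $\miint f^{qd/p}\le c\lambda_i^{qd/p}$ and $\miint f^{sd/p}\le c\lambda_i^{sd/p}$ derived inside the proof of Lemma~\ref{lem : miint f^gamma bound}; combining these with $a(z_i)\lambda_i^q\le g_{q,s}(z_i,\lambda_i)\le \Lambda$ delivers the correct $\lambda_i^{q}$-scaling in the $q$-growth contribution, and similarly for the $s$-growth term.

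The main obstacle will be the bookkeeping: four phases for $U_i$ combined with three integrands $(f^{p-1},\,a f^{q-1},\,b f^{s-1})$ give twelve sub-estimates, and each must match the correct phase-dependent H\"older control with the correct phase-dependent bound from Lemma~\ref{lem : miint f^gamma bound}. Once the matching is done, summing the three terms and multiplying by $r_2/r_1$ or $(r_1+r_2)$ yields $c\lambda_ir_i$ uniformly in $h<h_0$, which proves~\eqref{EQQ3.7}--\eqref{EQQ3.10}. The hypothesis $4K_iU_i\cap E(\Lambda)\neq\emptyset$ needed to invoke Lemma~\ref{lem : miint f^gamma bound} follows from Lemma~\ref{LEM2.9}\ref{iv}, and the passage from $K_iU_i$ to $[K_iU_i]_h$ in the Steklov version costs at most a bounded factor by Lemma~\ref{lem: steklov average} since $h<h_0$ is fixed.
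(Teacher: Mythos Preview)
Your outline for \eqref{EQQ3.8} matches the paper's approach: apply Lemma~\ref{LEM3.2}, split into the four phases, and pair the phase-specific prefactor $r_2/r_1$ with the correct bound on each of $f^{p-1}$, $af^{q-1}$, $bf^{s-1}$ via \eqref{EQQ3.2} and Lemma~\ref{LEM2.9}\ref{viii},\ref{UUi}. However, there are two genuine gaps.

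\textbf{Gap in case (ii): the spatial boundary sub-case is missing.} You propose to handle $K_iU_i\not\subset U_{R_2,S_2}(z_0)$ entirely through~\eqref{EQQ3.6}. But Lemma~\ref{LEM3.2} carries the standing hypothesis $B_{r_1}\subset B_{R_2}(x_0)$, so it is unavailable when $K_iB_i\cap B_{R_2}(x_0)^c\neq\emptyset$. The paper splits (ii) into two sub-cases: if $K_iB_i\subset B_{R_2}(x_0)$ and only the time interval escapes, then~\eqref{EQQ3.6} applies; if instead $K_iB_i$ escapes spatially, the paper uses the boundary Poincar\'e inequality of Lemma~\ref{lem: boundary poincare} (exploiting $v(\cdot,t)\in W_0^{1,d}(B_{R_2}(x_0))$) to get $\miint{K_iU_i}|v|\,dz\le cr_i(\miint{4K_iU_i}|\nabla v|^d\,dz)^{1/d}\le cr_i\lambda_i$ directly. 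Your plan omits this sub-case.

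\textbf{Gap in the $v_h$ estimates: the ``in parallel'' argument does not transfer to $[K_iU_i]_h$.} For $v_h$, Lemma~\ref{LEM3.2} places the right-hand side integrals on the Steklov-enlarged cylinder $[K_iU_i]_h$, not on $K_iU_i$. Your phase-by-phase matching relies on \eqref{EQQ3.2} (stated on $4K_iU_i$) and on the coefficient comparisons of Lemma~\ref{LEM2.9}\ref{viii} (stated on $200KQ_{r_i}(z_i)$); neither region contains $[K_iU_i]_h$ in general, because $h$ is a fixed positive number while the time-length of $K_iU_i$ can be arbitrarily small. Your appeal to Lemma~\ref{lem: steklov average} does not help: that lemma bounds $\miint{U}[f]_h$ by $\miint{[U]_h}f$, not the reverse, and says nothing about $|[K_iU_i]_h|/|K_iU_i|$. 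The paper therefore treats \eqref{EQQ3.7} and \eqref{EQQ3.9} by a much cruder route, abandoning the intrinsic scaling altogether: it uses $|I_i|\le r_i^2\le R_2r_i$ to get $r_2/r_1\le cr_i$ uniformly in the phase, replaces $a,b$ by $\|a\|_\infty,\|b\|_\infty$, and invokes \eqref{EQQ3.1} (applicable since $[K_iU_i]_h\supset 5U_i$ meets $E(\Lambda)$) to bound $\miint{[K_iU_i]_h}f^\gamma\le\Lambda^{\gamma/p}$. This yields $c(\operatorname{data},\Lambda)r_i$ directly, with no phase analysis. So the $\Lambda$-dependence in \eqref{EQQ3.7}, \eqref{EQQ3.9} is not merely the absorption $\lambda_i\le\Lambda^{1/p}$ you describe; it is forced by the need to use \eqref{EQQ3.1} rather than \eqref{EQQ3.2} on the $h$-enlarged domain.
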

\begin{proof}
 We start by proving \eqref{EQQ3.8}. Note that we assume $K_iU_i \subset U_{R_2, S_2}(z_0)$, and hence we can apply Lemma \ref{LEM3.2} with $Q=K_i U_i.$ We consider the following cases.

 \noindent{\bf Case I:} $U_i=Q_i.$ Using Lemma \ref{LEM3.2}, we get
 { \begin{align*}
     \miint{K_i U_i}|v-(v)_{K_i U_i}|\, dz &\leq c\lambda^{2-p}_ir_i\miint{K_i U_i}\left(f^{p-1}+a(z)f^{q-1}+b(z)f^{s-1}\right)\, dz\\
     &\quad +c(r_i+\lambda^{2-p}_ir^2_i)\miint{K_iU_i}f\, dz.
 \end{align*}}
 Since $r_i \leq R_2$ and $p\geq 2,$ we have
 \begin{align}\label{EQQ3.11}
 c(r_i+\lambda^{2-p}_ir^2_i)\miint{K_iU_i}f\, dz \leq c (R_2) r_i \miint{K_iU_i} f\, dz.     
 \end{align}
 Plugging \eqref{EQQ3.11} in the above estimate, we get
 \begin{align}\label{Equation3.12}
 \miint{K_i U_i}|v-(v)_{K_i U_i}|\, dz &\leq c\lambda^{2-p}_ir_i\miint{K_i U_i}\left(f^{p-1}+a(z)f^{q-1}+b(z)f^{s-1}\right)\, dz \nonumber\\
 &\qquad + cr_i \miint{K_i U_i}f\, dz.    
 \end{align}
 Since $U_i=Q_i,$ using $K^2\lambda^p_i\geq a(z_i)\lambda^q_i$ and $K^2\lambda^p_i\geq b(z_i)\lambda^s_i,$ we estimate 
 \begin{align*}
 &\miint{K_iU_i}a(z)f^{q-1}+b(z)f^{s-1}\, dz\\
 &\leq \miint{K_iU_i}|a(z)-a(z_i)|f^{q-1}\, dz +a(z_i)\miint{K_iU_i}f^{q-1}\,dz\\
 &\quad +\miint{K_iU_i}|b(z)-b(z_i)|f^{s-1}\, dz +b(z_i)\miint{K_iU_i}f^{s-1}\,dz\\
 &\leq [a]_{\alpha}(K_ir_i)^{\alpha}\miint{K_iU_i}f^{q-1}\,dz +K^2 \lambda^{p-q}_i\miint{K_iU_i}f^{q-1}\, dz\\
 &\quad +[b]_{\beta}(K_ir_i)^{\beta}\miint{K_iU_i}f^{s-1}\,dz +K^2 \lambda^{p-s}_i\miint{K_iU_i}f^{s-1}\, dz.
 \end{align*}
 Using H\"{o}lder's inequality and applying \eqref{EQQ3.2},  we further estimate
 { \begin{align*}
 \miint{K_iU_i}a(z)f^{q-1}+b(z)f^{s-1}\, dz &\leq [a]_{\alpha}(K_ir_i)^{\alpha}\left(\miint{K_iU_i}f^{s-1}\,dz\right)^{\frac{q-1}{s-1}}\\
 &\quad+K^2\lambda^{p-q}_i \left(\miint{K_iU_i}f^{s-1}\,dz\right)^{\frac{q-1}{s-1}}\\
 &\quad +[b]_{\beta}(K_ir_i)^{\beta}\lambda^{s-1}_i+K^2 \lambda^{p-s}_i\lambda^{s-1}_i\\
 &\leq [a]_{\alpha}(K_ir_i)^{\alpha}\lambda^{q-1}_i+[b]_{\beta}(K_ir_i)^{\beta} \lambda^{s-1}_i+2K^2 \lambda^{p-1}_i\\
 &\leq c\lambda^{p-1}_i,
 \end{align*}
 where the last inequality follows from Lemma \ref{LEM2.9} \ref{UUi}.}
 Plugging the above estimate in \eqref{Equation3.12}, we obtain
 \begin{align*}
\miint{K_i U_i}|v-(v)_{K_i U_i}|\, dz \leq c(\operatorname{data})\lambda_i r_i.    
 \end{align*}

 \noindent{\bf Case II:} $U_i=Q^q_i.$ Note that, in this case we have $K^2 \lambda^p_i<a(z_i)\lambda^q_i$ and $K^2 \lambda^p_i\geq b(z_i)\lambda^s_i.$ Using Lemma \ref{LEM3.2}, we get
{ \begin{align*}
     \miint{K_i U_i}|v-(v)_{K_i U_i}|\, dz &\leq c\underbrace{\frac{\lambda^{2}_ir_i}{g_q(z_i,\lambda_i)}\miint{K_i U_i}\left(f^{p-1}+a(z)f^{q-1}+b(z)f^{s-1}\right)\, dz}_{J_1}\\
     &\quad +\underbrace{c\left(r_i+\frac{\lambda^{2}_ir^2_i}{g_q(z_i,\lambda_i)}\right)\miint{K_iU_i}f\, dz}_{J_2}.
 \end{align*}
 Note that $J_2$ can be estimated as previous since $g_q(z_i,\lambda_i)>\lambda^p_i.$} {Now we estimate $J_1.$} First we note that, from Lemma \ref{lem : comparision of a(cdot) in p,q-phase} we have $\frac{a(z_i)}{2}\leq a(z)\leq 2a(z_i)$ and $[b]_{\beta}(50 K r_i)^{\beta}\lambda^s_i\leq (K^2-1)\lambda^p_i.$ Hence we have
 { \begin{align*}
    J_1 &\leq \frac{\lambda^2_i r_i}{\lambda^p_i}\miint{K_i U_i}f^{p-1}\,dz+ \frac{\lambda^2_i r_i}{a(z_i)\lambda^q_i}\miint{K_i U_i}a(z)f^{q-1}\,dz+\frac{\lambda^2_i r_i}{\lambda^p_i}\miint{K_i U_i}b(z)f^{s-1}\,dz\\
    &\leq c\lambda_i r_i+ \frac{2 \lambda^2_i r_i}{\lambda^q_i}\miint{K_i U_i}f^{q-1}\,dz\\
    &\quad+\frac{\lambda^2_ir_i}{\lambda^p_i}\left([b]_{\beta}(K_ir_i)^{\beta}\miint{K_iU_i}f^{s-1}\,dz+K^2 \lambda^{p-s}_i\miint{K_iU_i}f^{s-1}\, dz\right)\\
    &\leq c \lambda_i r_i.
 \end{align*}}
 The last part of the above calculation follows from Case I.

 \noindent{\bf Case III:} $U_i=Q^s_i.$ Note that, in this case we have $K^2 \lambda^p_i\geq a(z_i)\lambda^q_i$ and $K^2 \lambda^p_i< b(z_i)\lambda^s_i.$ Using Lemma \ref{LEM3.2}, we get
{\begin{align*}
     \miint{K_i U_i}|v-(v)_{K_i U_i}|\, dz &\leq c\underbrace{\frac{\lambda^{2}_ir_i}{g_s(z_i,\lambda_i)}\miint{K_i U_i}\left(f^{p-1}+a(z)f^{q-1}+b(z)f^{s-1}\right)\, dz}_{J_3}\\
     &\quad+\underbrace{c\left(r_i+\frac{\lambda^{2}_ir^2_i}{g_s(z_i,\lambda_i)}\right)\miint{K_iU_i}f\, dz}_{J_4}.
 \end{align*}}
 { The estimates of $J_3$ and $J_4$ are similar to case II.} We use the H\"{o}lder regularity of $a(z)$ and bounds on $b(z),$ i.e, $\frac{b(z_i)}{2}\leq b(z)\leq 2b(z_i),$ and we can arrive at the same conclusion.

 \noindent{\bf Case IV:} $U_i=Q^{q, s}_i.$ In this case, we have $K^2 \lambda^p_i<a(z_i)\lambda^q_i$ and $K^2 \lambda^p_i< b(z_i)\lambda^s_i.$ and again using Lemma \ref{LEM3.2}, we get
 { \begin{align*}
     &\miint{K_i U_i}|v-(v)_{K_i U_i}|\, dz\\
     &\qquad\leq c\underbrace{\frac{\lambda^{2}_ir_i}{g_{q,s}(z_i,\lambda_i)}\miint{K_i U_i}\left(f^{p-1}+a(z)f^{q-1}+b(z)f^{s-1}\right)\, dz}_{J_5}\\
     &\qquad \qquad +\underbrace{c\left(r_i+\frac{\lambda^{2}_ir^2_i}{g_{q,s}(z_i,\lambda_i)}\right)\miint{K_iU_i}f\, dz}_{J_6}.
 \end{align*}}
 From Lemma \ref{LEM2.9}, \ref{viii}, we have $\frac{a(z_i)}{2}\leq a(z)\leq 2a(z_i)$ and  $\frac{b(z_i)}{2}\leq b(z)\leq 2b(z_i)$ for every $z \in 200K Q_{r_i}(z_i).$ {The estimate of $J_6$ is same as previous as $\lambda^p_i+a(z_i)\lambda^q_i+b(z_i)\lambda^s_i>\lambda^p_i.$} { Next, we estimate $J_5.$} Then we have
 {\begin{align*}
     J_5 &\leq \frac{\lambda^2_i r_i}{\lambda^p_i}\miint{K_i U_i}f^{p-1}\,dz+ \frac{\lambda^2_i r_i}{a(z_i)\lambda^q_i}\miint{K_i U_i}a(z)f^{q-1}\,dz+\frac{\lambda^2_i r_i}{b(z_i)\lambda^s_i}\miint{K_i U_i}b(z)f^{s-1}\,dz\\
     &\leq \frac{\lambda^2_i r_i}{\lambda^p_i}\miint{K_i U_i}f^{p-1}\,dz+ \frac{2\lambda^2_i r_i}{\lambda^q_i}\miint{K_i U_i}f^{q-1}\,dz+\frac{2\lambda^2_i r_i}{\lambda^s_i}\miint{K_i U_i}f^{s-1}\,dz\\
     &\leq c\lambda_i r_i,
 \end{align*}
 where the last inequality follows from \eqref{EQQ3.2}.}
 This completes the proof of \eqref{EQQ3.8}. 

 Now we prove \eqref{EQQ3.10}. If $K_i U_i \not\subset U_{R_2, S_2}(z_0),$ then either $K_iB_i \subset B_{R_2}(x_0)$ and $K_i I_i \cap l^c_{S_2}(t_0)\neq \emptyset,$ or $K_iB_i \cap B^c_{R_2}\neq \emptyset.$ In the first case, we may apply \eqref{EQQ3.6} of Lemma \ref{LEM3.2}. Note that, since the right-hand side of \eqref{EQQ3.5} and \eqref{EQQ3.6} are the same, the proof follows from the previous case. In the second case, we apply Lemma \ref{lem: boundary poincare} with $\sigma =1$ and $\eta =d$ since $v(\cdot, t)\in W^{1,d}_0(B_{R_2}(x_0), \mathbb{R}^N).$ Indeed, we have
 \begin{align*}
     \miint{K_i U_i} |v|\, dz \leq cr_i \left(\miint{4K_i U_i}|\nabla v|^d\, dz\right)^{1/d}
 \end{align*}
 and using $|\nabla v|\leq cf,$ and \eqref{EQQ3.2} we obtain
 \begin{align*}
  \left(\miint{4K_i U_i}|\nabla v|^d\, dz\right)^{1/d} \leq c\left(\miint{4K_i U_i} f^d\, dz\right)^{1/d} \leq c\lambda_i,
 \end{align*}
 which completes the proof of \eqref{EQQ3.10}.

 Next, we focus on the estimates involving Steklov averages, i.e. \eqref{EQQ3.7} and \eqref{EQQ3.9}. If $K_i U_i \subset U_{R_2, S_2}(z_0),$ then from Lemma \ref{LEM3.2}, we get
 \begin{align*}
     \miint{K_i U_i}|v_h-(v_h)_{K_i U_i}|\, dz &\leq c \frac{|I_i|}{r_i}\miint{[K_iU_i]_h}\left(f^{p-1}+a(z)f^{q-1}+b(z)f^{s-1}\right)\,dz\\
     &\quad +c\left(|I_i|+r_i\right)\miint{[K_iU_i]_h}f\,dz.
 \end{align*}
 We note that $|I_i|\leq r^2_i \leq r_i R_2$ and $a(z)\leq ||a||_{\infty}, b(z)\leq ||b||_{\infty}.$ Moreover, we observe that for small $h>0,$ $[K_i U_i]_h$ may intersect $E(\Lambda),$ i.e, $[K_i U_i]_h\cap E(\Lambda)\neq \emptyset.$
 Using these observations and applying \eqref{EQQ3.1}, we obtain
 \begin{align*}
 \miint{K_i U_i}|v_h-(v_h)_{K_i U_i}|\, dz &\leq cr_i \miint{[K_i U_i]_h}f^{p-1}\,dz+cr_i||a||_{\infty}\miint{[K_iU_i]_h}f^{q-1}\,dz\\
 &\qquad+cr_i||b||_{\infty}\miint{[K_iU_i]_h}f^{s-1}\,dz\\
 &\leq cr_i \Lambda^{\frac{p-1}{p}}+cr_i ||a||_{\infty}\left(\miint{[K_iU_i]_h}f^{s-1}\,dz\right)^{\frac{q-1}{s-1}}\\
 &\qquad+cr_i||b||_{\infty}\Lambda^{\frac{s-1}{p}}\\
 &\leq cr_i\left(\Lambda^{\frac{p-1}{p}}+||a||_{\infty}\Lambda^{\frac{q-1}{p}}+||b||_{\infty}\Lambda^{\frac{s-1}{p}}\right)=c(\operatorname{data},\Lambda)r_i.
 \end{align*}
 This completes the proof of \eqref{EQQ3.7}. To prove \eqref{EQQ3.9} in the case $K_i B_i \subset B_{R_2}(x_0)$ and $K_i I_i \cap l^c_{S_2}\neq \emptyset,$ we need to estimate \eqref{EQQ3.6} which is the same as estimating \eqref{EQQ3.5}. When $K_i B_i \cap B_{R_2}\neq \emptyset,$ we may apply Lemma \ref{lem: boundary poincare} to get
 \begin{align*}
     \miint{K_i U_i}|v_h|\,dz \leq cr_i \left(\miint{4K_iU_i}|\nabla v_h|^d\,dz\right)^{\frac{1}{d}}.
 \end{align*}
 Now using Lemma \ref{lem: steklov average} and \eqref{EQQ3.1}, we estimate the right-hand side of the above expression as
 \begin{align*}
     \left(\miint{4K_i U_i}|\nabla v_h|^d\, dz\right)^{\frac{1}{d}}\leq c\left(\miint{[4K_iU_i]_h}|\nabla v|^d 
     { dz}\right)^{\frac{1}{d}}\leq c\left(\miint{[4K_iU_i]_h}f^d\,dz\right)^{\frac{1}{d}}\leq c\Lambda^{\frac{1}{p}},
 \end{align*}
 which gives the proof of \eqref{EQQ3.9}.
\end{proof}
\begin{corollary}\label{cor3.5}
We have the following estimates on $v^i_h, v^j_h$ and $v^i, v^j$ for every $i \in \mathbb{N}$ and $j \in \mathcal{I}:$
\begin{enumerate}[label=(\roman*),series=theoremconditions]
\item\label{cor: i} \begin{align}\label{eq:bound of dashint v-v^i}
    \miint{\frac{2}{K}U_i}|v-v^i|\,dz \leq c(\operatorname{data})\lambda_i r_i,
\end{align}
\item\label{cor:ii} \begin{align}
    \miint{\frac{2}{K}U_i}|v_h-v^i_h|\,dz \leq c(\operatorname{data}, \Lambda) r_i,
\end{align}
\item\label{cor: iii} \begin{align}
    |v^i_h-v^j_h|\leq c(\operatorname{data}, \Lambda) r_i,
\end{align}
\item \label{cor: iv} \begin{align}
    |v^i-v^j|\leq c(\operatorname{data}) \lambda_i r_i.
\end{align}
\end{enumerate}
\end{corollary}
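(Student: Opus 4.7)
All four bounds reduce to Lemma \ref{LEMMA: PTI} by way of Jensen's inequality, combined with the covering properties in Lemma \ref{LEM2.9}. The key observations are: (a) $v^i$ is either the average $(v)_{\frac{2}{K}U_i}$ (when $\frac{2}{K}U_i\subset U_{R_2,S_2}(z_0)$) or zero; and (b) for $j\in\mathcal{I}$ one has $\frac{2}{K}U_j\subset K_iU_i$ by Lemma \ref{LEM2.9}\ref{vii}, together with the measure comparison $|U_i|\le c|U_j|$ from Lemma \ref{LEM2.9}\ref{Uij}. Consequently, quantities averaged over $\frac{2}{K}U_i$ or $\frac{2}{K}U_j$ are dominated, up to a constant depending only on $n$ and $K$, by the corresponding quantity on $K_iU_i$, to which Lemma \ref{LEMMA: PTI} applies.

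\textbf{Proof of \ref{cor: i}.} Fix $i\in\mathbb{N}$. If $\frac{2}{K}U_i\subset U_{R_2,S_2}(z_0)$, then $v^i=(v)_{\frac{2}{K}U_i}$, and Jensen's inequality yields
$$
\miint{\frac{2}{K}U_i}|v-v^i|\,dz\le 2\miint{\frac{2}{K}U_i}|v-(v)_{K_iU_i}|\,dz\le c(n,K)\miint{K_iU_i}|v-(v)_{K_iU_i}|\,dz.
$$
If in addition $K_iU_i\subset U_{R_2,S_2}(z_0)$, we apply \eqref{EQQ3.8}; otherwise, we use the pointwise bound $|v-(v)_{K_iU_i}|\le 2|v|$ together with \eqref{EQQ3.10}. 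In either situation the right-hand side is $\le c\lambda_ir_i$. If instead $\frac{2}{K}U_i\not\subset U_{R_2,S_2}(z_0)$, then $v^i=0$ and $K_iU_i\not\subset U_{R_2,S_2}(z_0)$ (since $\frac{2}{K}U_i\subset K_iU_i$ because $K_i\ge 200\ge 2/K$), so we enlarge the domain of integration to $K_iU_i$ and apply \eqref{EQQ3.10} directly.

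\textbf{Proof of \ref{cor: iv}.} Fix $i\in\mathbb{N}$ and $j\in\mathcal{I}$, and split into four sub-cases according to whether each of $\frac{2}{K}U_i$ and $\frac{2}{K}U_j$ lies inside $U_{R_2,S_2}(z_0)$. When both lie inside, write
$$
|v^i-v^j|\le \bigl|(v)_{\frac{2}{K}U_i}-(v)_{K_iU_i}\bigr|+\bigl|(v)_{K_iU_i}-(v)_{\frac{2}{K}U_j}\bigr|
$$
and apply Jensen to each term (using $\frac{2}{K}U_i,\frac{2}{K}U_j\subset K_iU_i$ and the measure ratios bounded by Lemma \ref{LEM2.9}\ref{Uij}), then \eqref{EQQ3.8} or \eqref{EQQ3.10} according to whether $K_iU_i$ is contained in $U_{R_2,S_2}(z_0)$. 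When both lie outside, $v^i=v^j=0$. When exactly one lies outside, the corresponding $v^\bullet$ vanishes, and the other is controlled by Jensen on its $\frac{2}{K}U_\bullet$ together with \eqref{EQQ3.10}; the crucial point is that having at least one of the small cylinders outside $U_{R_2,S_2}(z_0)$ forces $K_iU_i\not\subset U_{R_2,S_2}(z_0)$, so that \eqref{EQQ3.10} is applicable.

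\textbf{Parts \ref{cor:ii} and \ref{cor: iii}.} These are proved by the identical scheme, replacing $v$ by $v_h$ and invoking the Steklov-averaged estimates \eqref{EQQ3.7} and \eqref{EQQ3.9} in place of \eqref{EQQ3.8} and \eqref{EQQ3.10}; the constant is now $c(\operatorname{data},\Lambda)$ rather than $c(\operatorname{data})\lambda_i$, as already reflected on the right-hand sides of Lemma \ref{LEMMA: PTI}. The main obstacle is not any one estimate but the bookkeeping across the sub-cases determined by the relative position of $\frac{2}{K}U_i$, $\frac{2}{K}U_j$, and $K_iU_i$ with respect to $U_{R_2,S_2}(z_0)$; the observation that the containment $\frac{2}{K}U_j\subset K_iU_i$ propagates boundary/interior status from the smaller to the larger cylinder is what makes the case analysis close uniformly.
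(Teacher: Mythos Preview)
Your proof is correct and follows essentially the same approach as the paper: reduce to Lemma~\ref{LEMMA: PTI} via triangle inequality plus enlargement from $\frac{2}{K}U_i$ (or $\frac{2}{K}U_j$) to $K_iU_i$ using Lemma~\ref{LEM2.9}\ref{Uij},\ref{vii}, then split cases according to whether \eqref{EQQ3.8}/\eqref{EQQ3.7} or \eqref{EQQ3.10}/\eqref{EQQ3.9} applies. One cosmetic remark: the phrase ``pointwise bound $|v-(v)_{K_iU_i}|\le 2|v|$'' is not literally true pointwise; what you actually use (and what the paper uses) is the averaged inequality $\miint{K_iU_i}|v-(v)_{K_iU_i}|\,dz\le 2\miint{K_iU_i}|v|\,dz$, which follows from the triangle inequality and Jensen.
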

\begin{proof}
We start by proving \ref{cor: i}. Indeed, we obtain
\begin{align*}
    \miint{\frac{2}{K}U_i}|v-v^i|\,dz &\leq \miint{\frac{2}{K}U_i}|v-(v)_{K_i U_i}|\, dz+\miint{\frac{2}{K}U_i}|(v)_{K_i U_i}-(v)_{\frac{2}{K}U_i}|\, dz\\
&\leq c\miint{K_i U_i}|v-(v)_{K_i U_i}|\,dz.
\end{align*}
If $K_i U_i \subset U_{R_2, S_2}(z_0),$ then from \eqref{EQQ3.8} we get
\begin{align*}
\miint{\frac{2}{K}U_i}|v-v^i|\,dz \leq  c\miint{K_i U_i}|v-(v)_{K_i U_i}|\,dz \leq c(\operatorname{data})\lambda_i r_i.   
\end{align*}
If $K_i U_i \not \subset U_{R_2, S_2}(z_0),$ from \eqref{EQQ3.10} we have
\begin{align*}
    \miint{\frac{2}{K}U_i}|v-v^i|\,dz\leq 2\miint{\frac{2}{K}U_i}|v|\,dz\leq c \miint{K_iU_i}|v|\,dz \leq c \lambda_i r_i.
\end{align*}
The proof of \ref{cor:ii} follows from \eqref{EQQ3.7} and \eqref{EQQ3.9} as above.
Now, let us prove \ref{cor: iv} and the proof of \ref {cor: iii} is similar. First, we assume $K_iU_i, K_jU_j \subset U_{R_2, S_2}(z_0).$ Using Lemma \ref{LEM2.9} \ref{Uij} and \ref{vii}, we estimate
{ \begin{align*}
    |v^i-v^j|&\leq |v^i-(v)_{K_iU_i}|+|v^j-(v)_{K_i U_i}|\\
    &=\left|\miint{\frac{2}{K}U_i}v-(v)_{K_i U_i}\,dz\right|+\left|\miint{\frac{2}{K}U_j}v-(v)_{K_i U_i}\,dz\right|\\
    &\leq c\miint{K_i U_i}|v-(v)_{K_i U_i}|\, dz \leq c\lambda_i r_i,
\end{align*}
where the last inequality follows from \eqref{EQQ3.8}.}
On the other hand, let $K_i U_i \not \subset U_{R_2, S_2}(z_0).$ In this case, again using Lemma \ref{LEM2.9} \ref{Uij} and \ref{vii}, we obtain
\begin{align*} 
|v^i-v^j|\leq \miint{\frac{2}{K}U_i}|v|\,dz + \miint{\frac{2}{K}U_j}|v|\,dz \leq c \miint{K_i U_i}|v|\,dz\leq c\lambda_i r_i.
\end{align*}
The other case $K_j U_j \not\subset U_{R_2, S_2}(z_0)$ follows from the fact that $r_i, r_j$ and $\lambda_i, \lambda_j$ are comparable.
\end{proof}

\subsection{Bounds on Lipschitz truncation and its derivatives}
In this subsection, we show that $v_h^\Lambda$, $v^\Lambda$ and their gradients are bounded.
\begin{lemma}\label{lem: bounds on Lipschitz truncation}
We have $|v^{\Lambda}_h(z)|\leq c(\operatorname{data}, \Lambda)$ and $|v^{\Lambda}(z)|\leq c(\operatorname{data})\lambda_i$ for every $z\in U_i.$
\end{lemma}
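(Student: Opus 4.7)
The plan is to begin from the identity
\[
v^{\Lambda}(z)=v(z)-\sum_{j\in\mathbb{N}}(v(z)-v^{j})\omega_{j}(z)=\sum_{j\in\mathbb{N}} v^{j}\omega_{j}(z),
\]
which is valid for $z\in U_i\subset E(\Lambda)^c$ because the partition of unity from Lemma \ref{lem : partition of unity} satisfies $\sum_j \omega_j\equiv 1$ on $E(\Lambda)^c$ and $U_i\subset E(\Lambda)^c$ by Lemma \ref{LEM2.9}\,\ref{iv}; the same identity applies to $v_h^\Lambda$. Using $0\le\omega_j\le 1$ together with the finite-overlap argument of Lemma \ref{lem : bound of I}, at most $c(n,K)$ terms in the sum are nonzero, so $|v^{\Lambda}(z)|\le c(n,K)\max_j|v^{j}|$, where the maximum runs over those indices $j$ with $z\in\tfrac{2}{K}U_j$. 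It therefore suffices to show, for each such $j$, that $|v^{j}|\le c(\operatorname{data})\lambda_i r_i$ and $|v_h^{j}|\le c(\operatorname{data},\Lambda) r_i$; the factor $r_i\le R_2$ is then absorbed into the data constant.

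For any $j$ with $z\in\tfrac{2}{K}U_j$, the point $z$ lies in $U_i\cap\tfrac{2}{K}U_j$, and the chains of inequalities driving Lemma \ref{lem : comparision of r_j and r_i} and Lemma \ref{lem : comparison of lambda_i and lambda_j} (which only depend on the intersection of neighboring cylinders through the metrics $d_i$) yield $r_j\approx r_i$ and $\lambda_j\approx\lambda_i$ with $K$-dependent constants. If $\tfrac{2}{K}U_j\not\subset U_{R_2,S_2}(z_0)$, then by definition $v^{j}=v_h^{j}=0$. Otherwise $v^{j}=\miint{\tfrac{2}{K}U_j}v\,dz$, and a routine measure comparison (the ratio $|K_jU_j|/|\tfrac{2}{K}U_j|$ is $K$-controlled by the definition of $K_j$) reduces the task to bounding $\miint{K_jU_j}|v|\,dz$.

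If $K_jU_j\not\subset U_{R_2,S_2}(z_0)$, then \eqref{EQQ3.10} applied at scale $K_jU_j$ already gives $\miint{K_jU_j}|v|\,dz\le c\lambda_j r_j$; its proof handles both the spatial-boundary case via Lemma \ref{lem: boundary poincare} and the time-boundary case via \eqref{EQQ3.6} of Lemma \ref{LEM3.2}. In the remaining case $K_jU_j\subset U_{R_2,S_2}(z_0)$, one writes
\[
\miint{K_jU_j}|v|\,dz\le\miint{K_jU_j}|v-(v)_{K_jU_j}|\,dz+|(v)_{K_jU_j}|,
\]
bounds the oscillation term by \eqref{EQQ3.8}, and treats $|(v)_{K_jU_j}|$ by telescoping through a short chain of overlapping Whitney cylinders $U_{k_0},\ldots,U_{k_N}$ beginning at $U_j$ and ending at one whose enlarged companion exits $U_{R_2,S_2}(z_0)$. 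Such a chain exists with uniformly bounded length because $r_j\le R\le c(\operatorname{data})$ and the cylinders $\tfrac{1}{6K^6}U_k$ are pairwise disjoint; each consecutive jump $|v^{k_\ell}-v^{k_{\ell+1}}|\le c\lambda_i r_i$ is supplied by Corollary \ref{cor3.5}\,\ref{cor: iv}, while the terminal value $|v^{k_N}|$ falls under the boundary case already handled.

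The main obstacle is exactly this interior case $K_jU_j\subset U_{R_2,S_2}(z_0)$: since the Poincar\'e-type inequalities of Lemma \ref{LEM3.2} only control oscillations of $v$ rather than its absolute size, one must propagate the estimate across neighboring Whitney cylinders until reaching one where a boundary Poincar\'e or the time-boundary version of Lemma \ref{LEM3.2} applies. The Steklov statement proceeds in parallel using \eqref{EQQ3.7}, \eqref{EQQ3.9} and Corollary \ref{cor3.5}\,\ref{cor: iii}; because these bounds carry a $\Lambda$ inside the constant, the final bound for $|v_h^{\Lambda}(z)|$ depends on $\Lambda$ as stated.
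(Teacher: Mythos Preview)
Your proposal overcomplicates the argument and contains a genuine gap in the interior case.

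The paper's proof is far simpler and avoids the Poincar\'e machinery entirely. The key observation you are missing is the pointwise bound $|v|=|(u-u_0)\eta\zeta|\le |u-u_0|\chi_{U_{R_2,S_2}(z_0)}\le f$, which follows immediately from the definition of $f$ in Subsection~\ref{subsection : Definition of test function}. With this in hand, for $z\in U_i$ and $j\in\mathcal{I}$ one simply writes
\[
|v^{j}|\le \miint{\frac{2}{K}U_j}|v|\,dz\le c\miint{K_iU_i} f\,dz\le c\lambda_i,
\]
using the inclusion $\tfrac{2}{K}U_j\subset K_iU_i$ from Lemma~\ref{LEM2.9}\,\ref{vii}, the measure comparison \ref{Uij}, and the maximal-function estimate \eqref{EQQ3.2}. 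No case distinction between interior and boundary cylinders is needed. The Steklov version is the same with Lemma~\ref{lem: steklov average} and \eqref{EQQ3.1} in place of \eqref{EQQ3.2}.

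The gap in your argument is the chaining step. You claim that from any $U_j$ with $K_jU_j\subset U_{R_2,S_2}(z_0)$ one can reach, in a uniformly bounded number of Whitney neighbors, a cylinder whose enlargement exits $U_{R_2,S_2}(z_0)$. But the Whitney decomposition here is adapted to $E(\Lambda)$, not to $U_{R_2,S_2}(z_0)$: the radii $r_k$ are comparable to $d_k(z_k,E(\Lambda))$, which has no relation to the distance from $z_k$ to $\partial U_{R_2,S_2}(z_0)$. A cylinder sitting deep inside both $E(\Lambda)^c$ and $U_{R_2,S_2}(z_0)$ may require an unbounded number of neighbor-to-neighbor jumps before any enlargement touches the boundary of $U_{R_2,S_2}(z_0)$. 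The justification you offer (that $r_j\le R$ and that the $\tfrac{1}{6K^6}U_k$ are pairwise disjoint) does not control chain length; disjointness only bounds the number of cylinders meeting a fixed one, not the number needed to traverse a macroscopic distance.
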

\begin{proof}
    Fix $z\in U_i$. Then, for each $j\in\mathcal{I}$, we obtain from Lemma \ref{LEM2.9}\ref{vii}, Lemma \ref{lem : miint f^gamma bound} and the definition of $v^j$ that
    \begin{equation*}   
        |v^j|\leq \miint{\frac{2}{K}U_j} |v| \,dz \leq c\miint{K_i U_i} f \, dz\leq c \lambda_i.
    \end{equation*}
    Moreover, we have from Lemma \ref{lem : partition of unity} that
    \begin{equation}    \label{eq : definiton of v^Lambda with repect to I}
        v^\Lambda(z)=\sum_{j\in\mathbb{N}} v^j\omega_j(z)=\sum_{j\in \mathcal{I}}v^j\omega_j(z).
    \end{equation}
    Therefore, we conclude from Lemma \ref{LEM2.9}\ref{ix} and Lemma \ref{lem : partition of unity} that
    $$
    |v^\Lambda(z)|\leq \sum_{j\in \mathcal{I}}|v^j||\omega_j(z)|\leq \sum_{j\in \mathcal{I}}|v^j|\leq c\lambda_i.
    $$
    On the other hand, for each $j\in \mathcal{I}$, we get from Lemma \ref{lem: steklov average} and Lemma \ref{lem : miint f^gamma bound} that
    $$
    |v_h^i|\leq \miint{\frac{2}{K}U_i} |v_h(z)|\, dz\leq c\miint{\frac{2}{K}[U_i]_h} f\, dz\leq c(\operatorname{data},\Lambda).
    $$
    As in \eqref{eq : definiton of v^Lambda with repect to I}, we have 
    \begin{equation}    \label{eq : definiton of v_h^Lambda with repect to I}
        v_h^\Lambda(z)=\sum_{j\in \mathcal{I}}v_h^j\omega_j(z),
    \end{equation}
    and hence we have the conclusion.
\end{proof}
\begin{lemma}\label{lem:  bounds on derivaties of Lipschitz truncation}
    For any $z\in U_i,$ we have 
    \begin{align}\label{eq : bound of gradient v_h^Lambda}
        |\nabla v^{\Lambda}_h(z)|\leq c(\operatorname{data}, \Lambda)\,\,\,\, \text{and}\,\,\,\, |\nabla v^{\Lambda}(z)|\leq c(\operatorname{data})\lambda_i.
    \end{align}
    Furthermore, we have 
    \begin{align}\label{EQQ3.18}
        |\partial_t v^{\Lambda}_h(z)|\leq c(\operatorname{data}, \Lambda)r^{-1}_i
    \end{align}
    and 
    { \begin{align}
        |\partial_t v^{\Lambda}(z)| \leq \begin{cases}
            c(\operatorname{data})r^{-1}_i\lambda^{p-1}_i,\,\,\, &\text{if}\,\,\, U_i=Q_i,\\
            c(\operatorname{data})r^{-1}_ig_{q}(z_i,\lambda_i)\lambda^{-1}_i,\,\,\, &\text{if}\,\,\, U_i=Q^q_i,\\
             c(\operatorname{data})r^{-1}_ig_{s}(z_i,\lambda_i)\lambda^{-1}_i,\,\,\, &\text{if}\,\,\, U_i=Q^s_i,\\
              c(\operatorname{data})r^{-1}_i\Lambda\lambda^{-1}_i,\,\,\, &\text{if}\,\,\, U_i=Q^{q,s}_i.\\
        \end{cases}
    \end{align}}
\end{lemma}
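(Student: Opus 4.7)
The plan is to exploit the representation
\[
v_h^\Lambda(z) = \sum_{j\in\mathcal{I}} v_h^j\,\omega_j(z), \qquad v^\Lambda(z) = \sum_{j\in\mathcal{I}} v^j\,\omega_j(z),
\]
which holds in a neighborhood of any $z\in U_i$ because $U_i\subset E(\Lambda)^c$ and $\sum_{j\in\mathbb{N}}\omega_j\equiv 1$ there (this identity was already invoked in the proof of Lemma~\ref{lem: bounds on Lipschitz truncation}). Differentiating in space or in time produces sums of the form $\sum_{j\in\mathcal{I}} v_h^j\,\nabla\omega_j(z)$ and $\sum_{j\in\mathcal{I}} v_h^j\,\partial_t\omega_j(z)$; since $\sum_{j\in\mathcal{I}}\nabla\omega_j\equiv 0$ and $\sum_{j\in\mathcal{I}}\partial_t\omega_j\equiv 0$ on $U_i$, I would subtract off $v_h^i$ (resp.\ $v^i$) for free to obtain the telescoped identities
\[
\nabla v_h^\Lambda(z) = \sum_{j\in\mathcal{I}}(v_h^j-v_h^i)\,\nabla\omega_j(z), \qquad \partial_t v_h^\Lambda(z) = \sum_{j\in\mathcal{I}}(v_h^j-v_h^i)\,\partial_t\omega_j(z),
\]
and similarly for $v^\Lambda$. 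This standard trick reduces everything to two ingredients already at our disposal: the oscillation bounds of Corollary~\ref{cor3.5}\ref{cor: iii}--\ref{cor: iv}, and the derivative bounds of Lemma~\ref{lem : partition of unity}(ii)--(iii).

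For the spatial estimates I would feed $|v_h^j-v_h^i|\le c(\operatorname{data},\Lambda)r_i$ and $|v^j-v^i|\le c(\operatorname{data})\lambda_i r_i$ together with $\|\nabla\omega_j\|_\infty\le c r_i^{-1}$ and the uniform cardinality bound $|\mathcal{I}|\le c(n,K)$ from Lemma~\ref{LEM2.9}\ref{ix} into the telescoped identity. The $r_i$ factors cancel and leave exactly $c(\operatorname{data},\Lambda)$ and $c(\operatorname{data})\lambda_i$, proving \eqref{eq : bound of gradient v_h^Lambda}.

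The time-derivative bound for $v^\Lambda$ is the place where the four-phase classification really has to be respected: in each case $U_i\in\{Q_i, Q_i^q, Q_i^s, Q_i^{q,s}\}$ the estimate for $\|\partial_t\omega_j\|_\infty$ has a different scaling ($r_i^{-2}\lambda_i^{p-2}$, $r_i^{-2}g_q(z_i,\lambda_i)\lambda_i^{-2}$, $r_i^{-2}g_s(z_i,\lambda_i)\lambda_i^{-2}$, or $r_i^{-2}\Lambda\lambda_i^{-2}$). Multiplying each by $|v^j-v^i|\le c\lambda_i r_i$ and summing over $|\mathcal{I}|$ terms produces precisely the four phase-dependent bounds asserted. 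For $\partial_t v_h^\Lambda$, by contrast, the oscillation bound $|v_h^j-v_h^i|\le c(\operatorname{data},\Lambda)r_i$ together with $\lambda_i\le\Lambda^{1/p}$ collapses all the phase-specific prefactors into a single $\Lambda$-dependent constant, yielding $|\partial_t v_h^\Lambda(z)|\le c(\operatorname{data},\Lambda)r_i^{-1}$ uniformly.

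The main obstacle is the bookkeeping for $\partial_t v^\Lambda$: I have to verify, in each of the four phases separately, that the factor $\lambda_i r_i$ from the oscillation estimate correctly cancels against the $r_i^{-2}\lambda_i^{-2}$ in the time-derivative of the partition of unity and leaves the appropriate combination $\lambda_i^{p-1}$, $g_q(z_i,\lambda_i)\lambda_i^{-1}$, $g_s(z_i,\lambda_i)\lambda_i^{-1}$, or $\Lambda\lambda_i^{-1}$. I do not anticipate any conceptual difficulty, because both Lemma~\ref{lem : partition of unity} and Corollary~\ref{cor3.5} were tailored to the same four phases via Lemma~\ref{LEM2.9}\ref{vi}--\ref{viii}, so the scalings are designed to match.
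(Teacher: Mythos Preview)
Your proposal is correct and follows essentially the same approach as the paper: both start from the finite representation $v^\Lambda(z)=\sum_{j\in\mathcal{I}}v^j\omega_j(z)$ on $U_i$, differentiate, subtract $v^i$ (resp.\ $v_h^i$) using $\sum_{j\in\mathcal{I}}\nabla\omega_j=\sum_{j\in\mathcal{I}}\partial_t\omega_j=0$, and then combine Corollary~\ref{cor3.5}\ref{cor: iii}--\ref{cor: iv}, Lemma~\ref{lem : partition of unity}(ii)--(iii), and the cardinality bound from Lemma~\ref{LEM2.9}\ref{ix}. The paper is terser in the four-phase bookkeeping for $\partial_t v^\Lambda$ and in the absorption of the phase prefactors for $\partial_t v_h^\Lambda$, but your more explicit accounting is exactly what is being left to the reader there.
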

\begin{proof}
    Fix $z\in U_i$. Then, by \eqref{eq : definiton of v^Lambda with repect to I} and \eqref{eq : definiton of v_h^Lambda with repect to I}, we get
    $$
    \nabla v_h^\Lambda (z)=\nabla \left(\sum_{j\in\mathcal{I}} v_h^j\omega_j(z)\right)=\sum_{j\in\mathcal{I}} v_h^j\nabla\omega_j(z)
    $$
    and
    $$
    \nabla v^\Lambda (z)=\sum_{j\in\mathcal{I}} v^j\nabla\omega_j(z).
    $$
    Note that Lemma \ref{lem : partition of unity} implies 
    $$
    0=\nabla\left(\sum_{j\in\mathbb{N}}\omega_j(z)\right)=\nabla\left(\sum_{j\in\mathcal{I}}\omega_j(z)\right)=\sum_{j\in\mathcal{I}}\nabla\omega_j(z).
    $$
    Thus, we get from Lemma \ref{LEM2.9} \ref{ix}, Lemma \ref{lem : partition of unity} and Corollary \ref{cor3.5} that 
    $$
    \begin{aligned}
        |\nabla v_h^\Lambda(z)|=\left|\sum_{j\in\mathcal{I}}(v_h^j-v_h^i)\nabla\omega_j(z)\right|\leq \sum_{j\in\mathcal{I}}|v_h^j-v_h^i||\nabla \omega_j|\leq c(\operatorname{data},\Lambda)
    \end{aligned}
    $$
    and
    $$
    |\nabla v^\Lambda(z)|\leq\sum_{j\in\mathcal{I}}|v^j-v^i||\nabla \omega_j| \leq c(\operatorname{data})\lambda_i.
    $$
    Next, by the above arguments, we get
    $$
    \partial_t v_h^\Lambda (z)=\sum_{j\in\mathcal{I}} v_h^j\partial_t\omega_j(z)=\sum_{j\in\mathcal{I}} (v_h^j-v_h^i)\partial_t\omega_j(z)
    $$
    and
    $$
    \partial_t v^\Lambda (z)=\sum_{j\in\mathcal{I}} v^j\partial_t\omega_j(z)=\sum_{j\in\mathcal{I}} (v^j-v^i)\partial_t\omega_j(z).
    $$
    Therefore, by Lemma \ref{LEM2.9} \ref{ix}, Lemma \ref{lem : partition of unity} and Corollary \ref{cor3.5}, we have the conclusion.
\end{proof}
\begin{lemma}\label{LEMMA4.9}
Let $E(\Lambda)$ is defined as in \eqref{defn of E}. Then $v^{\Lambda}_h, v^{\Lambda},$ the Lipschitz truncation defined in \eqref{lip trunc_h}, \eqref{test fn} satisfy the following estimates:
\begin{enumerate}[label=(\roman*),series=theoremconditions]
\item \label{lemma5.8 i}$$\iint_{E(\Lambda)^c}\left|v_h-v^{\Lambda}_h\right||\partial_t v^{\Lambda}_h|dz \leq c(\operatorname{data}, \Lambda)|E(\Lambda)^c|,$$
\item \label{lemma5.8 ii}$$\iint_{E(\Lambda)^c}\left|v-v^{\Lambda}\right||\partial_t v^{\Lambda}|dz \leq c(\operatorname{data}) \Lambda|E(\Lambda)^c|,$$
\item \label{lemma5.8 iii} \begin{align*}
H(z, |v^{\Lambda}|)+H(z, |\nabla v^{\Lambda}|)\leq c(\operatorname{data})\Lambda     
\end{align*} for almost every $z \in \mathbb{R}^{n+1}.$
\end{enumerate}
\end{lemma}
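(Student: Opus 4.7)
The plan is to reduce each of the three bounds to a per-cylinder estimate on the Whitney cover $\{\tfrac{1}{K}U_i\}_{i\in\mathbb{N}}$ of $E(\Lambda)^c$ from Lemma \ref{LEM2.9}\ref{i}, then sum. On each $U_i$ the integrand will be controlled by marrying the oscillation estimates of Corollary \ref{cor3.5} to the pointwise bounds of Lemmas \ref{lem: bounds on Lipschitz truncation} and \ref{lem:  bounds on derivaties of Lipschitz truncation}. The summation is then harmless because property \ref{iv} of Lemma \ref{LEM2.9} gives $\bigcup_i\tfrac{1}{6K^6}U_i\subset E(\Lambda)^c$ and property \ref{ii} makes this union disjoint, whence $\sum_i|U_i|\le c|E(\Lambda)^c|$.

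For \ref{lemma5.8 i} and \ref{lemma5.8 ii}, I start from the fact that $\omega_j$ is supported in $\tfrac{2}{K}U_j\subset E(\Lambda)^c$ by \ref{iv} and that $\sum_{j\in\mathcal{I}}\omega_j\equiv 1$ on $\tfrac{2}{K}U_i$ by Lemma \ref{lem : partition of unity}. This lets me rewrite, on $\tfrac{1}{K}U_i$,
\[
v-v^\Lambda=(v-v^i)+\sum_{j\in\mathcal{I}}(v^i-v^j)\omega_j,
\]
with the analogous identity for $v_h$. Using Corollary \ref{cor3.5}\ref{cor: i} and \ref{cor: iv} together with $|\mathcal{I}|\le c$ from \ref{ix}, integrating yields $\miint{\tfrac{1}{K}U_i}|v-v^\Lambda|\,dz\le c\lambda_ir_i$ and, in the Steklov case, $\miint{\tfrac{1}{K}U_i}|v_h-v_h^\Lambda|\,dz\le c(\operatorname{data},\Lambda)r_i$. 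Combined with the pointwise time-derivative bounds of Lemma \ref{lem:  bounds on derivaties of Lipschitz truncation}, a case-by-case check on the phase of $U_i$ shows that the per-cylinder product is at most $c(\operatorname{data},\Lambda)|U_i|$ for \ref{lemma5.8 i} and $c(\operatorname{data})\Lambda|U_i|$ for \ref{lemma5.8 ii}. For instance, when $U_i=Q^q_i$, the factor $\lambda_ir_i\cdot r_i^{-1}g_q(z_i,\lambda_i)\lambda_i^{-1}=g_q(z_i,\lambda_i)\le\Lambda$; the $Q_i$, $Q_i^s$, $Q_i^{q,s}$ cases are analogous with time-factors $\lambda_i^{p-2}$, $g_s(z_i,\lambda_i)\lambda_i^{-2}$, and $\Lambda\lambda_i^{-2}$ respectively.

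For \ref{lemma5.8 iii}, I split $\mathbb{R}^{n+1}=E(\Lambda)\cup E(\Lambda)^c$. On $E(\Lambda)$, property \ref{iv} forces $\omega_i\equiv 0$, so $v^\Lambda=v$ and $|v^\Lambda|+|\nabla v^\Lambda|\le cf$; Lebesgue differentiation applied to $f^d+(af^q)^{d/p}+(bf^s)^{d/p}$, which is dominated by its own maximal function on $E(\Lambda)$, gives $f(z)^p+a(z)f(z)^q+b(z)f(z)^s\le c\Lambda$ almost everywhere, whence $H(z,|v^\Lambda|)+H(z,|\nabla v^\Lambda|)\le c\Lambda$. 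On $E(\Lambda)^c$, each point lies in some $U_i$ and Lemmas \ref{lem: bounds on Lipschitz truncation}--\ref{lem:  bounds on derivaties of Lipschitz truncation} give $|v^\Lambda|+|\nabla v^\Lambda|\le c\lambda_i$, so the task reduces to $H(z,\lambda_i)\le c\Lambda$ on $U_i$. For $U_i=Q_i$ this follows from property \ref{UUi} and the H\"older continuity of $a,b$ (to replace $a(z_i),b(z_i)$ by $a(z),b(z)$), combined with $a(z_i)\lambda_i^q,b(z_i)\lambda_i^s\le K^2\lambda_i^p\le K^2\Lambda$; in the other three cases the phase-comparability \ref{viii} supplies $a(z)\le 2a(z_i)$ and/or $b(z)\le 2b(z_i)$, while the defining inequality of the phase governs whichever of the two coefficients is not active.

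The main bookkeeping obstacle is the case distinction in part \ref{lemma5.8 ii}: each of the four phases of $U_i$ brings its own time-derivative scaling, and the cancellation that produces $\Lambda$ at the end depends on recognising that the time-scaling built into $U_i$ is precisely the $g$-function required to make $\lambda_i r_i\cdot r_i^{-1}\cdot(\text{time factor})\le\Lambda$. Organising this matching cleanly, rather than repeating essentially the same calculation four times, is the one moderately delicate point of the write-up.
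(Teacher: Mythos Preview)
Your proposal is correct and follows essentially the same approach as the paper: localize to the Whitney cylinders, combine the oscillation bounds from Corollary~\ref{cor3.5} with the pointwise derivative bounds from Lemma~\ref{lem:  bounds on derivaties of Lipschitz truncation}, and sum using the disjointness of $\tfrac{1}{6K^6}U_i\subset E(\Lambda)^c$. The only cosmetic difference is that the paper localizes via $v_h-v_h^\Lambda=\sum_i(v_h-v_h^i)\omega_i$ and the support of $\omega_i$ directly, rather than your covering-plus-add/subtract decomposition, and for part~\ref{lemma5.8 iii} the paper's phase-by-phase verification that $H(z,\lambda_i)\le c\Lambda$ on $U_i$ is identical to yours.
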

\begin{proof}
 We start with proving \ref{lemma5.8 i}:
\begin{align}\label{EQQ5.20}
\iint_{E(\Lambda)^c}|v_h-v^{\Lambda}_h||\partial_t v^{\Lambda}_h|dz &\leq \iint_{E(\Lambda)^c}\sum_{i\in \mathbb{N}}|v_h-v^i_h||\omega_i||\partial_t v^{\Lambda}_h|dz\\ &\leq \sum_{i\in \mathbb{N}}\iint_{\frac{2}{K}U_i}|v_h-v^i_h||\omega_i||\partial_t v^{\Lambda}_h|dz\nonumber\\
&\leq \sum_{i \in \mathbb{N}}||\partial_t v^{\Lambda}_h||_{L^{\infty}(\frac{2}{K}U_i)}\iint_{\frac{2}{K}U_i}|v_h-v^i_h|dz.
\end{align}  
Now using Corollary \ref{cor3.5} \ref{cor:ii} and \eqref{EQQ3.18}, we can estimate the last term of the above inequality to obtain
\begin{align*}
\iint_{E(\Lambda)^c}|v_h-v^{\Lambda}_h||\partial_t v^{\Lambda}_h|dz \leq c \sum_{i \in \mathbb{N}}|U_i|=c\sum_{i\in \mathbb{N}}\left|\frac{1}{6K^6}U_i\right|=c(\operatorname{data}, \Lambda)|E(\Lambda)^c|.
\end{align*}
The proof of \ref{lemma5.8 ii} can be obtained similarly. To prove \ref{lemma5.8 iii}, first let $z\in E(\Lambda).$ In this case,
{ \begin{align*}
    &|v^{\Lambda}(z)|^p+a(z)|v^{\Lambda}(z)|^q+b(z)|v^{\Lambda}(z)|^s+|\nabla v^{\Lambda}(z)|^p+a(z)|\nabla v^{\Lambda}(z)|^q+b(z)|\nabla v^{\Lambda}(z)|^s\\
    &\qquad=|v(z)|^p+a(z)|v(z)|^q+b(z)|v(z)|^s+|\nabla v(z)|^p+a(z)|\nabla v(z)|^q+b(z)|\nabla v(z)|^s\\
    &\qquad\leq \left(f^p+a(z)f^q+b(z)f^s\right)\leq c\Lambda.
\end{align*}}
Now let us consider $z\in E(\Lambda)^c.$ Then $z\in U_i$ for some $i \in \mathbb{N}.$ Using Lemma \ref{lem: bounds on Lipschitz truncation} and Lemma \ref{lem:  bounds on derivaties of Lipschitz truncation}, we get
{ \begin{align*}
&|v^{\Lambda}(z)|^p+a(z)|v^{\Lambda}(z)|^q+b(z)|v^{\Lambda}(z)|^s+|\nabla v^{\Lambda}(z)|^p+a(z)|\nabla v^{\Lambda}(z)|^q+b(z)|\nabla v^{\Lambda}(z)|^s\\
&\qquad\leq c(\lambda^p_i+a(z)\lambda^q_i+b(z)\lambda^s_i).    
\end{align*}}
\noindent{\bf Case I:} $U_i=Q_i.$ Using the H\"{o}lder regularity of $a(z)$ and $b(z)$, and Lemma \ref{LEM2.9}, \ref{UUi} we obtain
\begin{align*}
 \lambda^p_i+a(z)\lambda^q_i+b(z)\lambda^s_i &\leq \lambda^p_i+a(z_i)\lambda^q_i+[a]_{\alpha}r^{\alpha}_i\lambda^q_i+b(z_i)\lambda^s_i+[b]_{\beta}r^{\beta}_i\lambda^s_i\\
 &\leq c \left(\lambda^p_i+a(z_i)\lambda^q_i+b(z_i)\lambda^s_i\right)=c\Lambda.
\end{align*}
\noindent{\bf Case II:} $U_i=Q^q_i.$ In this case, we use Lemma \ref{lem : comparision of a(cdot) in p,q-phase} to get
\begin{align*}
\lambda^p_i+a(z)\lambda^q_i+b(z)\lambda^s_i &\leq \lambda^p_i+2a(z_i)\lambda^q_i+b(z_i)\lambda^s_i+[b]_{\beta}r^{\beta}_i\lambda^s_i\\
&\leq c(\lambda^p_i+a(z_i)\lambda^q_i+b(z_i)\lambda^s_i)=c\Lambda.
\end{align*}
\noindent{\bf Case III:} $U_i=Q^s_i.$ In this case, we use Lemma \ref{lem : comparision of b(cdot) in p,s-phase} to get
\begin{align*}
\lambda^p_i+a(z)\lambda^q_i+b(z)\lambda^s_i &\leq \lambda^p_i+a(z_i)\lambda^q_i+2b(z_i)\lambda^s_i+[a]_{\alpha}r^{\alpha}_i\lambda^q_i\\
&\leq c(\lambda^p_i+a(z_i)\lambda^q_i+b(z_i)\lambda^s_i)=c\Lambda.
\end{align*}
\noindent{\bf Case IV:} $U_i=Q^{q,s}_i.$ In this case we use Lemma \ref{LEM2.1} and obtain
\begin{align*}
\lambda^p_i+a(z)\lambda^q_i+b(z)\lambda^s_i \leq \lambda^p_i+2a(z_i)\lambda^q_i+2b(z_i)\lambda^s_i \leq 2 \Lambda.   
\end{align*}
This completes the proof.
\end{proof}
\subsection{Lipschitz regularity of $v^{\Lambda}_h$} In this section, we show that $v^{\Lambda}_h$ is Lipschitz continuous with respect to the metric
\begin{align*}
    d_{\lambda^p}(z, w)=\max\left\{|x-y|, \sqrt{\lambda^{p-2}|t-s|}\right\},
\end{align*}
where $z, w \in U_{R_2, S_2}(z_0)$ with $z=(x,t)$ and $w=(y,s)$ and $\lambda$ is chosen such that $\Lambda=\lambda^p+||a||_{\infty}\lambda^q+||b||_{\infty}\lambda^s.$ { Let us recall the definition of $Q_{l,\lambda}(w),$ that is,
 \begin{align*}
 Q_{l, \lambda}(w):=B_{l}(y)\times(s-\lambda^{2-p}l^2, s+\lambda^{2-p}l^2).
 \end{align*}
\begin{lemma}[Campanato characterization]\label{capmanto}
    Assume that $f\in L^1_{loc}(\mathbb{R}^{n+1}).$ Then there exist a constant $c=c(n)$ and a set $E\subset \mathbb{R}^{n+1}$ with $|E|=0$ such that 
\begin{align*}
    |f(z)-f(w)| & \leq c(n)\,d_{\lambda^p}(z,w)\sup_{l>0}\miint{Q_{l, \lambda}(w)}\frac{\left|f(\tilde{w})-(f)_{Q_{l, \lambda}(w)}\right|}{l} d\tilde{w}\\
 &\qquad+ c(n)\,d_{\lambda^p}(z,w)\sup_{l>0}\miint{Q_{l, \lambda}(z)}\frac{\left|f(\tilde{z})-(f)_{Q_{l, \lambda}(z)}\right|}{l} d\tilde{z} .
    \end{align*}
    for every $z,w \in \mathbb{R}^{n+1}\setminus E.$
\end{lemma}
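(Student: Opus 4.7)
\textbf{Proof proposal for Lemma \ref{capmanto}.} The plan is to follow the classical Campanato characterization of Lipschitz/Hölder continuity, adapted to the parabolic metric $d_{\lambda^p}$. I let $E \subset \mathbb{R}^{n+1}$ denote the complement of the Lebesgue set of $f$, so that $|E|=0$ and for every $z \in \mathbb{R}^{n+1}\setminus E$ one has $(f)_{Q_{l,\lambda}(z)} \to f(z)$ as $l \to 0^+$. Fix $z,w \in \mathbb{R}^{n+1}\setminus E$ and set $d := d_{\lambda^p}(z,w)$. I split
\[
|f(z)-f(w)| \le |f(z)-(f)_{Q_{2d,\lambda}(z)}| + |(f)_{Q_{2d,\lambda}(z)}-(f)_{Q_{2d,\lambda}(w)}| + |(f)_{Q_{2d,\lambda}(w)}-f(w)|,
\]
and handle each term separately. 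Write $M_z := \sup_{l>0}\miint{Q_{l,\lambda}(z)}\frac{|f-(f)_{Q_{l,\lambda}(z)}|}{l}\,d\tilde{z}$, and similarly $M_w$; these are the quantities appearing on the right-hand side of the lemma.

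For the first and third terms I use a standard dyadic telescoping argument. Setting $r_k := 2^{1-k}d$, the cylinders $Q_{r_{k+1},\lambda}(z) \subset Q_{r_k,\lambda}(z)$ have comparable volumes (ratio $2^{n+2}$ depending only on $n$), so
\[
|(f)_{Q_{r_k,\lambda}(z)}-(f)_{Q_{r_{k+1},\lambda}(z)}| \le 2^{n+2}\miint{Q_{r_k,\lambda}(z)}|f-(f)_{Q_{r_k,\lambda}(z)}|\,d\tilde{z} \le 2^{n+2}\, r_k\, M_z.
\]
Summing the resulting geometric series and using the Lebesgue differentiation theorem to identify $f(z) = \lim_k (f)_{Q_{r_k,\lambda}(z)}$ yields $|f(z)-(f)_{Q_{2d,\lambda}(z)}| \le c(n)\,d\, M_z$, and symmetrically for $w$.

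For the middle term I exploit the fact that $d_{\lambda^p}$ is a genuine metric: since $d_{\lambda^p}(z,w)=d$, both $Q_{2d,\lambda}(z)$ and $Q_{2d,\lambda}(w)$ are contained in the common cylinder $Q_{3d,\lambda}(z)$, whose volume is still comparable to either of them. Therefore
\[
|(f)_{Q_{2d,\lambda}(z)}-(f)_{Q_{3d,\lambda}(z)}| + |(f)_{Q_{2d,\lambda}(w)}-(f)_{Q_{3d,\lambda}(z)}| \le c(n)\miint{Q_{3d,\lambda}(z)}|f-(f)_{Q_{3d,\lambda}(z)}|\,d\tilde{z} \le c(n)\,d\,M_z,
\]
and consequently $|(f)_{Q_{2d,\lambda}(z)}-(f)_{Q_{2d,\lambda}(w)}| \le c(n)\,d\,M_z$. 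Combining the three estimates gives the claimed bound $|f(z)-f(w)| \le c(n)\,d_{\lambda^p}(z,w)\,(M_z+M_w)$.

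The only delicate point I anticipate is the middle term, since a priori the natural bound for $|(f)_{Q_{2d,\lambda}(w)}-(f)_{Q_{3d,\lambda}(z)}|$ involves oscillations on a cylinder centered at $z$ rather than at $w$; but this is precisely what is allowed by the statement, which sums $M_z$ and $M_w$. Verifying that $d_{\lambda^p}$ obeys the triangle inequality (so that the cylinder inclusions work cleanly) and that the volume ratios are controlled by $n$ alone (since $\lambda$ is the same in every cylinder) is routine. Everything else reduces to the standard Campanato bookkeeping.
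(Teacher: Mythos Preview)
Your proof is correct and follows essentially the same approach as the paper: the paper simply cites \cite[Lemma 4.13]{Kinnunen2021} and notes that replacing Euclidean balls by the parabolic cylinders $Q_{l,\lambda}$ (using the inclusion $Q_{d_{\lambda^p}(z,w),\lambda}(z)\subset Q_{2d_{\lambda^p}(z,w),\lambda}(w)$) carries the standard Campanato argument through, whereas you have spelled out that very argument---dyadic telescoping plus a comparison of overlapping cylinders---in self-contained form. The volume ratios indeed depend only on $n$ because the factor $\lambda^{2-p}$ is common to all cylinders, and your treatment of the middle term via $Q_{3d,\lambda}(z)$ is exactly the inclusion the paper invokes.
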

\begin{proof}
    Since $Q_{d_{\lambda^p}(z, w),\lambda}(z)\subset Q_{2 d_{\lambda^p}(z, w),\lambda}(w)$, by replacing $B(x,r)$ with $Q_{l,\lambda}(z)$ and $|x-y|$ with $d_{\lambda^p}(z,w),$ and taking $\beta=1$ in the proof of \cite[Lemma 4.13]{Kinnunen2021}, we obtain that 
    \begin{align*}
    |f(z)-f(w)|&\leq c(n)\,d_{\lambda^p}(z,w)\sup_{0<l< 4d_{\lambda^p}(z,w)}\miint{Q_{l, \lambda}(w)}\frac{\left|f(\tilde{w})-(f)_{Q_{l, \lambda}(w)}\right|}{l} d\tilde{w}\\
 &\qquad+ c(n)\,d_{\lambda^p}(z,w)\sup_{0<l<4d_{\lambda^p}(z,w)}\miint{Q_{l, \lambda}(z)}\frac{\left|f(\tilde{z})-(f)_{Q_{l, \lambda}(z)}\right|}{l} d\tilde{z}\\
 &\leq c(n)\,d_{\lambda^p}(z,w)\sup_{l>0}\miint{Q_{l, \lambda}(w)}\frac{\left|f(\tilde{w})-(f)_{Q_{l, \lambda}(w)}\right|}{l} d\tilde{w}\\
 &\qquad+ c(n)\,d_{\lambda^p}(z,w)\sup_{l>0}\miint{Q_{l, \lambda}(z)}\frac{\left|f(\tilde{z})-(f)_{Q_{l, \lambda}(z)}\right|}{l} d\tilde{z}.
\end{align*}
This completes the proof.
\end{proof}
We remark that the above conclusion holds for every $z,\, w\in \mr^{n+1}$ by \cite[Remark 4.14]{Kinnunen2021}.}
Now we are ready to prove the Lipschitz regularity of $v^{\Lambda}_h.$
\begin{lemma}\label{LIP LEM}
    There exists a constant $c_{\Lambda}=c(\operatorname{data}, \Lambda)$ such that
    \begin{align*}
        |v^{\Lambda}_h(z)-v^{\Lambda}_h(w)|\leq c_{\Lambda}d_{\lambda^p}(z, w)
    \end{align*}
    for every $z, w \in \mathbb{R}^{n+1}.$
\end{lemma}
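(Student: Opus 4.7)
The plan is to apply the Campanato characterization (Lemma \ref{capmanto}), which reduces the Lipschitz estimate to establishing the uniform bound
\begin{equation*}
\mathcal{A}(z,l) := \frac{1}{l}\miint{Q_{l,\lambda}(z)}\left|v^\Lambda_h - (v^\Lambda_h)_{Q_{l,\lambda}(z)}\right|\,d\tilde{z} \leq c_\Lambda
\end{equation*}
for almost every $z \in \mathbb{R}^{n+1}$ and every $l > 0$. The main tools will be the Poincar\'e-type inequality for $v_h$ (Lemma \ref{LEM3.2}), the averaged bounds on $f$ (Lemma \ref{lem : miint f^gamma bound}), the pointwise bounds on $v^{\Lambda}_h$ and its derivatives (Lemmas \ref{lem: bounds on Lipschitz truncation} and \ref{lem:  bounds on derivaties of Lipschitz truncation}), and the Whitney covering properties from Lemma \ref{LEM2.9}.

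Setting $Q := Q_{l,\lambda}(z)$, I would use the triangle inequality with $\mu = (v_h)_Q$:
\begin{equation*}
\mathcal{A}(z,l) \leq \frac{2}{l}\miint{Q}|v^\Lambda_h - v_h|\,d\tilde{z} + \frac{2}{l}\miint{Q}|v_h - (v_h)_Q|\,d\tilde{z}.
\end{equation*}
For the second term, Lemma \ref{LEM3.2} provides a bound involving averages of $f^{p-1}$, $a f^{q-1}$, $b f^{s-1}$ and $f$ on $[Q]_h$. If $[Q]_h \cap E(\Lambda) \neq \emptyset$ these averages are controlled by a constant depending on $\Lambda$ via \eqref{EQQ3.1} together with $\|a\|_\infty, \|b\|_\infty < \infty$; otherwise $[Q]_h$ lies inside (a controlled multiple of) a single Whitney cylinder $U_i$ and \eqref{EQQ3.2} gives the analogous bound. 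Combined with the fact that $\lambda^{p-2}$ is controlled by $\Lambda$ through \eqref{eq : def of lambda}, this yields $\tfrac{1}{l}\miint{Q}|v_h - (v_h)_Q| \leq c_\Lambda$.

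For the first term, note that $v^\Lambda_h - v_h = \sum_i (v_h^i - v_h)\omega_i$ is supported in $E(\Lambda)^c$, so
\begin{equation*}
\iint_{Q}|v^\Lambda_h - v_h|\,d\tilde{z} \leq \sum_{i \in \mathcal{I}_Q}\iint_{\frac{2}{K}U_i}|v_h^i - v_h|\,d\tilde{z} \leq c_\Lambda \sum_{i \in \mathcal{I}_Q}r_i|U_i|,
\end{equation*}
with $\mathcal{I}_Q := \{i : \tfrac{2}{K}U_i \cap Q \neq \emptyset\}$ and the last inequality coming from Corollary \ref{cor3.5} \ref{cor:ii}. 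If $Q \cap E(\Lambda) \neq \emptyset$, I would prove $r_i \leq C_\Lambda l$ for all $i \in \mathcal{I}_Q$ by combining the distance estimate $3r_i \leq d_i(U_i, E(\Lambda))$ from Lemma \ref{LEM2.9} \ref{iii} with a metric comparison between $d_{\lambda^p}$ and $d_i$; the pairwise disjointness of $\tfrac{1}{6K^6}U_i$ (Lemma \ref{LEM2.9} \ref{ii}) combined with containment in a fixed enlargement of $Q$ then yields $\sum_{i\in\mathcal{I}_Q}|U_i| \lesssim |Q|$, giving the desired bound. The remaining case $Q \subset E(\Lambda)^c$ is handled directly via the pointwise bounds $|\nabla v^\Lambda_h| \leq c_\Lambda$ and $|\partial_t v^\Lambda_h| \leq c_\Lambda r_i^{-1}$ on each $U_i$, using a Poincar\'e estimate together with $\lambda^{2-p} \leq 1$ and $l \leq r_i$.

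The principal technical obstacle is the metric comparison between the globally fixed $d_{\lambda^p}$ and the locally defined $d_i$, which varies among the four phases with time scalings $\lambda_i^{p-2}$, $g_q(z_i,\lambda_i)\lambda_i^{-2}$, $g_s(z_i,\lambda_i)\lambda_i^{-2}$ and $g_{q,s}(z_i,\lambda_i)\lambda_i^{-2}$. The inequalities $1 \leq \lambda \leq \lambda_i \leq \Lambda^{1/p}$, together with the phase analysis encoded in Lemma \ref{LEM2.9} \ref{viii}, provide the required conversions in each case and bound the resulting distortion by a constant depending on $\Lambda$.
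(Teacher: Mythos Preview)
Your proposal follows essentially the same route as the paper (Campanato characterization, then a case analysis splitting the oscillation into $|v^\Lambda_h-v_h|$ and $|v_h-(v_h)_Q|$ pieces), but the case decomposition you describe has a small defect that makes one step fail as written.

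In the ``remaining case $Q\subset E(\Lambda)^c$'' you claim $l\le r_i$ for each Whitney cylinder $U_i$ meeting $Q$. This is not justified: points $\tilde z\in Q$ near $\partial Q$ may lie in Whitney cylinders with $r_i$ much smaller than $l$, so the time-derivative bound $|\partial_t v^\Lambda_h(\tilde z)|\le c_\Lambda r_i^{-1}$ cannot be converted to $c_\Lambda l^{-1}$ uniformly on $Q$. The paper avoids this by splitting according to whether $2Q_{l,\lambda}(w)\subset E(\Lambda)^c$ rather than $Q_{l,\lambda}(w)\subset E(\Lambda)^c$. Under the stronger hypothesis, every $\tilde z\in Q$ satisfies $Q_{l,\lambda}(\tilde z)\subset 2Q\subset E(\Lambda)^c$, hence $l\le d_{\lambda^p}(\tilde z,E(\Lambda))\le d_i(\tilde z,E(\Lambda))\le 6r_i$, and the direct mean-value argument goes through. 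In the complementary case $2Q\cap E(\Lambda)\ne\emptyset$ your splitting argument works exactly as you describe (the point $w_2\in 2Q\cap E(\Lambda)$ is what yields $r_i\le c(\Lambda)l$).

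Two further remarks: your claim that ``$[Q]_h$ lies inside a controlled multiple of a single Whitney cylinder'' when $[Q]_h\subset E(\Lambda)^c$ is unnecessary and not accurate as stated---this subcase is entirely absorbed by the direct derivative-bound argument once the $2Q$ decomposition is used. Also, Lemma~\ref{LEM3.2} requires $B_l\subset B_{R_2}(x_0)$; when this fails the paper invokes the boundary Poincar\'e inequality (Lemma~\ref{lem: boundary poincare}) instead, which you should mention.
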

\begin{proof} 
 We first note that, since $\lambda_i\ge \lambda$ from the definition of $d,$ we have 
 \begin{align}\label{EQQ5.23}
     d_{\lambda^p}(z,w) \leq d_{i}(z,w)\,\,\,\,\, \text{for all}\,\,\, i\in \mathbb{N}.
 \end{align}
{Applying Lemma \ref{capmanto}, we get
 \begin{equation} \label{capmanto char}
 \begin{aligned}
 |v^{\Lambda}_h(z)-v^{\Lambda}_h(w)| &\leq c(n)d_{\lambda^p}(z,w)\sup_{l>0}\miint{Q_{l, \lambda}(w)}\frac{\left|v^{\Lambda}_h(\tilde{w})-(v^{\Lambda}_h)_{Q_{l, \lambda}(w)}\right|}{l} d\tilde{w}\\
 &\quad + c(n)d_{\lambda^p}(z,w)\sup_{l>0}\miint{Q_{l, \lambda}(z)}\frac{\left|v^{\Lambda}_h(\tilde{z})-(v^{\Lambda}_h)_{Q_{l, \lambda}(z)}\right|}{l} d\tilde{z} .
 \end{aligned}
 \end{equation}}
Note that $v^{\Lambda}_h \in L^1_{loc}(\mathbb{R}^{n+1}).$ Therefore from \eqref{capmanto char}, it is enough to show that there exists a constant $c_\Lambda=c_\Lambda(\operatorname{data}, \Lambda)$ such that
{ \begin{align}\label{EQQ5.25}
\miint{Q_{l, \lambda}(w)}\frac{|v^{\Lambda}_h(\tilde{w})-(v^{\Lambda}_h)_{Q_{l, \lambda}(w)}| }{l}d\tilde{w}\leq c_\Lambda\,\,\,\,\, \text{for all}\,\,\, Q_{l, \lambda}(w)\subset \mathbb{R}^{n+1}. 
\end{align}}
We fix the cube $Q_{l, \lambda}(w)$ and prove the above estimate \eqref{EQQ5.25} when $2Q_{l, \lambda}(w)$ completely lies on the bad set $E(\Lambda)^c,$ or $Q_{l, \lambda}(w)$ lies inside the bad set $E(\Lambda)^c,$ but $2Q_{l, \lambda}(w)$ meets the good set $E(\Lambda)$ or both $Q_{l, \lambda}(w)$ and $2Q_{l, \lambda}(w)$ meets the good set $E(\Lambda).$ 

\textit{Case 1: $2Q_{l,\lambda}(\omega) \subset E(\Lambda)^c$.} Let $z\in Q_{l,\lambda}(\omega)$. Then, by Lemma \ref{LEM2.9}\ref{i} there exists $i\in \mathbb{N}$ such that $z\in U_i$. Using \eqref{EQQ5.23}, we have
$$
l\leq d_{\lambda^p}(z,E(\Lambda))\leq d_{\lambda^p}(z,z_i)+d_{\lambda^p}(z_i,E(\Lambda))\leq d_i(z,z_i)+d_i(z_i,E(\Lambda)).
$$
Here, $z_i$ is the centre of $U_i$. Since $U_i$ is a ball of radius $r_i$ with respect to the metric $d_i(\cdot,\cdot)$, we get from Lemma \ref{LEM2.9}\ref{iv} that
$$
l\leq d_i(z,z_i)+d_i(z_i,E(\Lambda))\leq r_i+5r_i=6r_i.
$$
Lemma \ref{lem:  bounds on derivaties of Lipschitz truncation} implies the uniform estimate
\begin{equation}\label{eq : bound of partial_t v_h^Lambda}
    |\partial_t v_h^\Lambda(z)|\leq c(\operatorname{data},\Lambda)r_i^{-1}\leq c(\operatorname{data},\Lambda)l^{-1}
\end{equation}
for all $z\in Q_{l,\lambda}(w)$.

To prove \eqref{EQQ5.25}, let $z_1,z_2\in Q_{l,\lambda}(w)$ with $z_1=(x_1,t_1)$ and $z_2=(x_2,t_2)$. Since $v_h^\Lambda\in C^\infty(E(\Lambda)^c,\mr^N)$ and $Q_{l,\lambda}(w)\subset E(\Lambda)^c$, by the intermediate value theorem, \eqref{eq : bound of gradient v_h^Lambda} and \eqref{eq : bound of partial_t v_h^Lambda}, we have
$$
\begin{aligned}
    |v_h^\Lambda(z_1)-v_h^\Lambda(z_2)|&\leq |v_h^\Lambda(x_1,t_1)-v_h^\Lambda(x_2,t_1)|+|v_h^\Lambda(x_2,t_1)-v_h^\Lambda(x_2,t_2)|\\
    &\leq cl\sup_{z\in Q_{l,\lambda}(\omega)} |\nabla v_h^\Lambda(z)|+ c\lambda^{2-p}l^2 \sup_{z\in Q_{l,\lambda}(\omega)} |\partial_t v_h^\Lambda(z)|\leq c(\operatorname{data},\Lambda)l.
\end{aligned}
$$
Thus, we conclude
{ $$
\begin{aligned}
    \miint{Q_{l, \lambda}(w)}\frac{|v^{\Lambda}_h-(v^{\Lambda}_h)_{Q_{l, \lambda}(w)}| }{l}d\tilde{w}&\leq \miint{Q_{l, \lambda}(w)}\miint{Q_{l, \lambda}(w)}\frac{|v^{\Lambda}_h(z_1)-v^{\Lambda}_h(z_2)| }{l}dz_2 dz_1\\
    &\leq c_\Lambda(\operatorname{data,\Lambda}).
\end{aligned}
$$}

\textit{Case 2: $2Q_{l,\lambda}(w)\cap E(\Lambda)\neq \emptyset$ and $Q_{l,\lambda}(w)\cap E(\Lambda)^c=\emptyset$.} In this case $v_h^\Lambda=v_h$ and hence
{ $$
\miint{Q_{l,\lambda}(\omega)}\frac{|v_h^\Lambda-(v_h^\Lambda)_{Q_{l,\lambda}(w)}|}{l}\, d\tilde{w} = \miint{Q_{l,\lambda}(\omega)}\frac{|v_h-(v_h)_{Q_{l,\lambda}(w)}|}{l}\, d\tilde{w}.
$$}
We denote $w=(y,s)$. If $B_l(y)\subset B_{R_2}(x_0)$, then $l\leq R_2$ is satisfied and Lemma \ref{lem : miint f^gamma bound} and Lemma \ref{LEM3.2} imply that 
{ $$
\miint{Q_{l,\lambda}(w)} \frac{|v_h -(v_h)_{Q_{l,\lambda}(w)}|}{l}\,d\tilde{w}\leq c(\operatorname{data},\Lambda)(\lambda^{2-p}+\lambda^{2-p}l+1)\leq c(\operatorname{data},\Lambda).
$$}
On the other hand, if $B_l(y) \not\subset B_{R_2}(x_0)$, we apply Lemma \ref{lem: boundary poincare} with $\sigma =1$ and $\eta=d$ to get
{ $$
\miint{Q_{l,\lambda}(w)} \frac{|v_h -(v_h)_{Q_{l,\lambda}(w)}|}{l}\,d\tilde{w}\leq 2 \miint{Q_{l,\lambda}(w)} \frac{|v_h|}{l}\,d\tilde{w}\leq c \miint{4Q_{l,\lambda}(w)} |\nabla v_h|\,d\tilde{w}.
$$}
Recalling that in this case $2Q_{l,\lambda}(w)\cap E(\Lambda)\neq \emptyset$, we conclude with Lemma \ref{lem : miint f^gamma bound} that
{ $$
\miint{4Q_{l,\lambda}(w)} |\nabla v_h|\,d\tilde{w}\leq c\miint{[4Q_{l,\lambda}(w)]_h} f\, d\tilde{w}\leq c(\operatorname{data},\Lambda).
$$}
\textit{Case 3: $2Q_{l,\lambda}(w)\cap E(\Lambda)\neq \emptyset$ and $Q_{l,\lambda}\cap E(\Lambda)\neq \emptyset$.} We define the index set
$$
P=\{i\in\mathbb{N}:Q_{l,\lambda}(w)\cap \frac{2}{K}U_i \neq \emptyset\}.
$$
We want to show that the radii $r_i$ are bounded uniformly by $l$ with respect to $i\in P.$ Let $i\in P$, $w_1\in Q_{l,\lambda}(w)\cap \frac{2}{K}U_i$ and $w_2\in 2Q_{l,\lambda}(w)\cap E(\Lambda)$ with $w_1=(y_1,s_1)$ and $w_2=(y_2,s_2)$. By Lemma \ref{LEM2.9} \ref{iii} and $w_1\in \frac{2}{K}U_i$, we obtain
$$
3r_1\leq d_i(U_i,E(\Lambda))\leq d_i(z_i,w_2)\leq d_i(z_i,w_1)+d_i(w_1,w_2)\leq 2r_i+d_i(w_1,w_2),
$$
and hence $r_i\leq d_i(w_1,w_2)$. Moreover, since $\lambda\leq \lambda_i$ and $w_1,w_2\in 2Q_{l,\lambda}(w)$, we get
$$
\begin{aligned}
    d_i(w_1,w_2)&\leq \max\left\{|y_1-y_2|,\sqrt{\Lambda\lambda_i^{-2}|s_1-s_2|}\right\}\\
    &\leq \Lambda^{\frac{1}{2}}\max\left\{|y_1-y_2|,\sqrt{\lambda^{p-2}|s_1-s_2|}\right\}\\
    &=\Lambda^{\frac{1}{2}} d_{\lambda^p}(w_1,w_2)\leq 4\Lambda^\frac{1}{2}l.
\end{aligned}
$$
Thus, we have $r_i\leq c(\Lambda)l$.

Note that
{ $$
\begin{aligned}
    \miint{Q_{l, \lambda}(w)}\frac{|v^{\Lambda}_h-(v^{\Lambda}_h)_{Q_{l, \lambda}(w)}| }{l}d\tilde{w}&\leq 2\miint{Q_{l, \lambda}(w)}\frac{|v^{\Lambda}_h-(v_h)_{Q_{l, \lambda}(w)}| }{l}d\tilde{w}\\
    &\leq 2\miint{Q_{l, \lambda}(w)}\frac{|v^{\Lambda}_h-v_h|}{l}d\tilde{w}\\
    &\quad +2\miint{Q_{l, \lambda}(w)}\frac{|v_h-(v_h)_{Q_{l, \lambda}(w)}| }{l}d\tilde{w}.
\end{aligned}
$$}
As in Case 2, we can estimate the second term on the right-hand side. To estimate the first term, we obtain from the definition of $v_h^\Lambda$ and the fact that $w_i$ is supported in $\frac{2}{K}U_i$ that
{ $$
\begin{aligned}
    \miint{Q_{l, \lambda}(w)}\frac{|v^{\Lambda}_h-v_h|}{l}d\tilde{w}&=\miint{Q_{l, \lambda}(w)}\frac{|\sum_{i\in P}(v_h-v_h^i)w_i|}{l}d\tilde{w}\\
    &\leq \miint{Q_{l, \lambda}(w)} \sum_{i\in P}\frac{|v_h-v_h^i|w_i}{l}d\tilde{w}\\
    &=\frac{1}{|Q_{l,\lambda}(w)|}\sum_{i\in P}\iint_{Q_{i,\lambda}(w)\cap \frac{2}{K}U_i}\frac{|v_h-v_h^i|w_i}{l}d\tilde{w}.
\end{aligned}
$$}
Since $w_i\leq 1$ and $r_i\leq c(\Lambda)l$, we have
{ $$
\frac{1}{|Q_{l,\lambda}(w)|}\sum_{i\in P}\iint_{Q_{i,\lambda}(w)\cap \frac{2}{K}U_i}\frac{|v_h-v_h^i|w_i}{l}d\tilde{w} \leq \frac{c(\Lambda)}{|Q_{l,\lambda}(w)|}\sum_{i\in P}\iint_{\frac{2}{K}U_i}\frac{|v_h-v_h^i|}{r_i}d\tilde{w}.
$$}
Combining the previous inequalities with \eqref{eq:bound of dashint v-v^i}, we obtain
{ $$
\miint{Q_{l, \lambda}(w)}\frac{|v^{\Lambda}_h-v_h|}{l}d\tilde{w}\leq \frac{c(\operatorname{data},\Lambda)}{|Q_{l,\lambda}(w)|}\sum_{i\in P} |U_i|.
$$}
Since $r_i\leq c(\Lambda)l$, we get $U_i\subset c(\operatorname{data},\Lambda)Q_{i,\Lambda}(w)$ for every $i\in P$. By Lemma \ref{LEM2.9}\ref{ii}, we have that
{ $$
\miint{Q_{l,\lambda}(w)} \frac{|v_h^\Lambda-v_h|}{l}\,d\tilde{w} \leq \frac{c(\operatorname{data},\Lambda)}{|Q_{l,\lambda}(w)|}\sum_{i\in P} \left|\frac{1}{6K^6}U_i\right|\leq c(\operatorname{data},\Lambda).
$$}
Thus, the proof is completed.
\end{proof}
\begin{corollary}\label{COR4.11}
    Let $E(\Lambda)$ be defined in \eqref{defn of E}. Then $v_h$ satisfies the estimate
    \begin{align*}
        H(z,|v_h^\Lambda(z)|)+H(z,|\nabla v_h^\Lambda (z)|)\leq c(\operatorname{data}, \Lambda) \,\,\,\, \text{for\,\, a.e.}\,\,\,\, z\in \mathbb{R}^{n+1}.
    \end{align*}
\end{corollary}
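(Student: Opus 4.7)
The strategy is to observe that since $a,b\in L^\infty(\mathbb{R}^{n+1})$ we have $H(z,\kappa)\le(1+\|a\|_\infty+\|b\|_\infty)(\kappa^p+\kappa^q+\kappa^s)$, so it suffices to exhibit pointwise a.e.\ bounds $|v_h^\Lambda(z)|+|\nabla v_h^\Lambda(z)|\le c(\operatorname{data},\Lambda)$. The plan is then to decompose $\mathbb{R}^{n+1}=E(\Lambda)\cup E(\Lambda)^c$ and establish such bounds in each region separately.

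On the bad set, the work is already done. For any $z\in E(\Lambda)^c$, Lemma \ref{LEM2.9}\ref{i} provides $i\in\mathbb{N}$ with $z\in U_i$, so Lemma \ref{lem: bounds on Lipschitz truncation} gives $|v_h^\Lambda(z)|\le c(\operatorname{data},\Lambda)$ and Lemma \ref{lem:  bounds on derivaties of Lipschitz truncation} gives $|\nabla v_h^\Lambda(z)|\le c(\operatorname{data},\Lambda)$. Since each $\omega_i$ is supported in $\tfrac{2}{K}U_i\subset 4U_i\subset E(\Lambda)^c$ by Lemma \ref{LEM2.9}\ref{iv}, the partition of unity vanishes on the good set, so for $z\in E(\Lambda)$ we have $v_h^\Lambda(z)=v_h(z)=[u-u_0]_h(z)\,\eta(x)\zeta(t)$, and the task reduces to bounding $v_h$ and $\nabla v_h$ pointwise a.e.\ on $E(\Lambda)$.

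To this end, I would use $|\eta|,|\zeta|\le 1$ and \eqref{eq : bound of gradients of eta and zeta} to reduce to controlling $[|u-u_0|]_h$ and $[|\nabla u|]_h$, and then exploit that both are $\le [f]_h$ on $U_{R_2,S_2}(z_0)$ by the choice of $f$. The key pointwise estimate is $[f]_h(z)\le c\,Mf(z)$ for a.e.\ $z$: for fixed $z=(x,t)$ and small $r>0$, the cube $B_r(x)\times(t-h,t+h)$ contains $z$ and satisfies $\dashint_{B_r(x)\times(t,t+h)}f\le c\dashint_{B_r(x)\times(t-h,t+h)}f\le cMf(z)$, so passing $r\to 0^+$ via Lebesgue differentiation in space yields the claim. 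On $E(\Lambda)$, Hölder's inequality combined with \eqref{defn of E} gives
\begin{align*}
Mf(z)\le\bigl(M(f^d)(z)\bigr)^{1/d}\le\Bigl(M\bigl(f^d+(af^q)^{d/p}+(bf^s)^{d/p}\bigr)(z)\Bigr)^{1/d}\le\Lambda^{1/p},
\end{align*}
so $|v_h(z)|+|\nabla v_h(z)|\le c(\operatorname{data})\Lambda^{1/p}$ on $E(\Lambda)$. Combining the two cases and invoking $a,b\in L^\infty$ produces $H(z,|v_h^\Lambda|)+H(z,|\nabla v_h^\Lambda|)\le c(\operatorname{data},\Lambda)(\Lambda+\Lambda^{q/p}+\Lambda^{s/p})\le c(\operatorname{data},\Lambda)$ for a.e.\ $z$.

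The main obstacle in the argument is the transition from the maximal‐function bound enjoyed on $E(\Lambda)$ to a bound on the Steklov averages appearing in $v_h$ and $\nabla v_h$; the naive pointwise estimate $|u-u_0|\le f$ at $z$ is insufficient because Steklov averaging samples $u$ at points $(x,\tau)$ with $\tau\ne t$ where $f$ need not be controlled. The space–time maximal function bound $[f]_h(z)\le cMf(z)$ circumvents this difficulty and is the crux of the proof.
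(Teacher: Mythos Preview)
Your proof is correct but takes a different route from the paper's. The paper's argument is essentially a two-line application of Lemma~\ref{LIP LEM}: since $v_h^\Lambda$ vanishes outside the bounded set $U_{R_2,S_2}(z_0)$ and is Lipschitz with respect to $d_{\lambda^p}$, it is globally bounded with bounded spatial gradient (by Rademacher), and the $H$-bound then follows from $a,b\in L^\infty$. You instead decompose into $E(\Lambda)^c$ and $E(\Lambda)$, quoting Lemmas~\ref{lem: bounds on Lipschitz truncation} and~\ref{lem:  bounds on derivaties of Lipschitz truncation} on the bad set and handling the good set directly via the Steklov--maximal-function estimate $[f]_h\le cMf$ together with the definition of $E(\Lambda)$. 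The paper's route is shorter because Lemma~\ref{LIP LEM} is already established and packages all the work; your route is more self-contained in that it bypasses the Campanato-type machinery behind Lemma~\ref{LIP LEM}, and it cleanly isolates the one new ingredient needed here---controlling the time-shifted Steklov samples on the good set---rather than inheriting that control implicitly from the Lipschitz bound.
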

\begin{proof}
Since $v^{\Lambda}_h(z)=0$ in $U_{R_2, S_2}(z_0)^c$ and $v^{\Lambda}_h$ is Lipschitz continuous for almost every $z\in \mathbb{R}^{n+1}$, we get that $|v^{\Lambda}_h(z)|\leq c(\operatorname{data}, \Lambda)$ and $|\nabla v^{\Lambda}_h(z)|\leq c(\operatorname{data}, \Lambda)$  for almost every $z\in \mathbb{R}^{n+1}.$   Then we have
{\begin{align*}
    &|v^{\Lambda}_h(z)|^p+a(z)|v^{\Lambda}_h(z)|^q+b(z)|v^{\Lambda}_h(z)|^s+|\nabla v^{\Lambda}_h(z)|^p+a(z)|\nabla v^{\Lambda}_h(z)|^q+b(z)|\nabla v^{\Lambda}_h(z)|^s\\
    &\qquad\leq c(\operatorname{data}, \Lambda)(1+||a||_{\infty}+||b||_{\infty}).
\end{align*}}
This completes the proof.
\end{proof}
\subsection{Some more properties of Lipschitz truncation}In the following proposition, we collect some more properties of Lipschitz truncation.
\begin{proposition} \label{prop Lip_trunc}
    Let $E(\Lambda)$ be as in \eqref{defn of E}, and let $\eta, \zeta$ be the cut-off functions. Then $\{v_h^\Lambda \}_{h>0}$ and a function $v^\Lambda$ satisfy the following properties:
    \begin{enumerate}[label=(\roman*),series=theoremconditions]
    \item\label{p2-1} $v_h^\Lambda\in W_0^{1,2}(supp(\zeta);L^2(supp(\eta),\mathbb{R}^N))\cap L^\infty(supp(\zeta);W^{1,\infty}_{0}(supp(\eta),\mathbb{R}^N))$.
    \item\label{p2-2} $v^\Lambda\in L^\infty(supp(\zeta)+h_0;W^{1,\infty}_{0}(supp(\eta),\mathbb{R}^N))$.
    \item\label{p2-3} $v_h^\Lambda= v_h,$ $v^\Lambda= v$, $\nabla v_h^\Lambda= \nabla v_h,$ $\nabla v^\Lambda=\nabla v$ a.e. in $E(\Lambda)$.
     \item\label{p3-1} $v_h^\Lambda \to v^\Lambda$ in $L^\infty(\Omega,\mathbb{R}^N)$ as $h\to0^+$, taking a subsequence if necessary.
    \item\label{p3-2} $\nabla v_h^\Lambda\to \nabla v^\Lambda$ and $\partial_t v_h^\Lambda\to \partial_t v^\Lambda$ a.e. in $E(\Lambda)^c$ as $h\to0^+$. 
\end{enumerate}
\end{proposition}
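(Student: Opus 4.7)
The plan is to treat the five items in the order \emph{(iii), (i)--(ii), (iv), (v)}. Property (iii) is essentially bookkeeping, (i)--(ii) combine the Lipschitz estimates already established with a verification of the boundary vanishing, and (iv)--(v) are Steklov convergence arguments. For (iii), each $\omega_i\in C_0^\infty(\tfrac{2}{K}U_i)$ by Lemma \ref{lem : partition of unity} and $\tfrac{2}{K}U_i\subset 4U_i\subset E(\Lambda)^c$ by Lemma \ref{LEM2.9}\ref{iv}, so $\omega_i(z)=0$ and $\nabla\omega_i(z)=0$ at any $z\in E(\Lambda)$; since $v_h^i$ and $v^i$ are constants, \eqref{lip trunc_h} and \eqref{test fn} simplify to $v_h^\Lambda=v_h$ and $v^\Lambda=v$ on $E(\Lambda)$, with matching gradients almost everywhere.

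For (i)--(ii), the interior $W^{1,\infty}$-type bounds are already in Lemma \ref{LIP LEM} and Corollary \ref{COR4.11} (for $v_h^\Lambda$) and in Lemmas \ref{lem: bounds on Lipschitz truncation}--\ref{lem:  bounds on derivaties of Lipschitz truncation} (for $v^\Lambda$). What remains is the vanishing on the lateral and temporal boundaries of $\operatorname{supp}\eta\times\operatorname{supp}\zeta$. The crucial observation is that if $z=(x,t)$ lies outside $\operatorname{supp}\eta\times\operatorname{supp}\zeta$, then $v_h(z)=v(z)=0$ directly, and if in addition $z\in E(\Lambda)^c$, then for every Whitney cylinder $\tfrac{2}{K}U_j$ containing $z$ one of two alternatives holds: either $\tfrac{2}{K}U_j\not\subset U_{R_2,S_2}(z_0)$, in which case $v_h^j=v^j=0$ by the convention in Subsection \ref{subsection : Definition of test function}, or $\tfrac{2}{K}U_j$ is contained in the zero set of $\eta\zeta$, in which case the defining averages of $v_h^j$ and $v^j$ vanish. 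Either way $v_h^\Lambda(z)=v^\Lambda(z)=0$, which yields the zero lateral and temporal traces required in (i)--(ii); the global Lipschitz estimate of Lemma \ref{LIP LEM} in particular upgrades this to the $W_0^{1,2}$-in-time membership in (i).

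For (iv) and (v), the engine is the standard Steklov convergence $[u-u_0]_h\to u-u_0$ in $L^p(\Omega_T)$ and, along a subsequence, pointwise almost everywhere. Transferring this convergence under the cutoff $\eta\zeta$ and under the averages defining $v_h^j$ yields $v_h^j\to v^j$ for every $j\in\mathbb{N}$. On each cylinder $U_i$ we have the finite-sum representations
\[
v_h^\Lambda=\sum_{j\in\mathcal{I}}v_h^j\omega_j,\qquad \nabla v_h^\Lambda=\sum_{j\in\mathcal{I}}(v_h^j-v_h^i)\nabla\omega_j,\qquad \partial_t v_h^\Lambda=\sum_{j\in\mathcal{I}}(v_h^j-v_h^i)\partial_t\omega_j,
\]
with analogous formulas for $v^\Lambda$, and $|\mathcal{I}|$ finite by Lemma \ref{LEM2.9}\ref{ix}; these reduce each of (iv) and (v) to termwise convergence of finite sums. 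For (iv), combining pointwise convergence with the uniform bound from Corollary \ref{COR4.11} and the compact support from (i) yields the $L^\infty$ convergence via dominated convergence. The main obstacle is the support/trace analysis in (i)--(ii); everything else reduces to direct application of the earlier lemmas.
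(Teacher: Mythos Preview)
Your treatment of (iii) and (v) matches the paper's and is correct. There are, however, two issues.

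For (i)--(ii), the dichotomy you propose is not valid as stated: if $z$ lies just outside $\operatorname{supp}\eta\times\operatorname{supp}\zeta$ but still inside $U_{R_2,S_2}(z_0)$, a Whitney cylinder $\tfrac{2}{K}U_j$ containing $z$ can be contained in $U_{R_2,S_2}(z_0)$ and yet overlap $\operatorname{supp}\eta\times\operatorname{supp}\zeta$, so neither of your alternatives applies and $v_h^j$ need not vanish. The paper (cf.\ Corollary~\ref{COR4.11}) uses only the weaker and cleaner fact that $v_h^\Lambda=0$ outside $U_{R_2,S_2}(z_0)$: if $z\notin U_{R_2,S_2}(z_0)$ and $z\in\tfrac{2}{K}U_j$, then automatically $\tfrac{2}{K}U_j\not\subset U_{R_2,S_2}(z_0)$, hence $v_h^j=0$ by the convention. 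Combined with the Lipschitz continuity from Lemma~\ref{LIP LEM} and the membership $v_h\in W_0^{1,2}(\operatorname{supp}\zeta;L^2(\operatorname{supp}\eta,\mathbb{R}^N))$ coming from Definition~\ref{weak solution} and the Steklov averaging, this suffices.

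The genuine gap is in (iv). Dominated convergence yields $L^p$ convergence for finite $p$, not $L^\infty$; pointwise a.e.\ convergence together with a uniform bound does not imply uniform convergence. The paper instead uses the uniform Lipschitz constant from Lemma~\ref{LIP LEM}, which makes the family $\{v_h^\Lambda\}_{h>0}$ equicontinuous; together with the uniform bound and the common compact support in $U_{R_2,S_2}(z_0)$, the Arzel\`a--Ascoli theorem produces a uniformly convergent subsequence, and the Steklov pointwise convergence identifies the limit as $v^\Lambda$.
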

\begin{proof}
To show \ref{p2-1}, we first note from Definition \ref{weak solution} and the definition of Steklov averages that $v_h \in W^{1,2}_0(supp(\zeta); L^2(supp(\eta), \mathbb{R}^N))$. Moreover, the definition of $v^{\Lambda}_h$ in \eqref{lip trunc_h} only matters for finite sum since $|\mathcal{I}|\leq c$. Therefore, together with Lemma \ref{LIP LEM}, we complete the proof of \ref{p2-1}.

The proof of \ref{p2-2} is obvious from Lemma \ref{LEMMA4.9}~\ref{lemma5.8 iii}. Indeed, we have $|v^{\Lambda}(z)|\leq c\Lambda^{1/p}$ and $|\nabla v^{\Lambda}(z)|\leq c\Lambda^{1/p}$ for all $z\in \mathbb{R}^{n+1}.$

The proof of \ref{p2-3} follows from the definitions \eqref{lip trunc_h} and \eqref{test fn}. In fact, since $\omega_i(z)=0$ for $z\in E(\Lambda),$ we have the proof.

Since $v^{\Lambda}_h(z)=0$ for $z\in U^c_{R_2, S_2}(z_0),$ using Lemma \ref{LIP LEM} we see that $\{v^{\Lambda}_h\}_{h>0}$ is equicontinuous and uniformly bounded. By the properties of Steklov averages, we already have $v^{\Lambda}_h \to v^{\Lambda}$ as $h \to 0+.$ Hence, by Arzela-Ascoli theorem, we get $v^{\Lambda}_h\to v^{\Lambda}$ in $L^{\infty}(\Omega, \mathbb{R}^N).$ This proves \ref{p3-1}.

The proof of \ref{p3-2} follows from $v^i_h \to v^i$ as $h \to 0$ and the expressions computed in Lemma \ref{lem:  bounds on derivaties of Lipschitz truncation}.
\end{proof}
\section{\bf Proof of energy estimate}  \label{sec : proof of main theorem} 
In this section, we prove the energy estimate in Theorem \ref{thm : the Caccioppoli inequality} using the Lipschitz truncation $v^{\Lambda}_h$.
\subsection{Proof of Theorem \ref{thm : the Caccioppoli inequality}}
We closely follow the proof of \cite[Subsection 5.1]{Wontae2023a}. For $\tau \in \ell_{S_2-h}(t_0)$ and sufficiently small $\delta>0$, let
$$
\zeta_\delta(t)=
\begin{cases}
    1, &t\in (-\infty,\tau-\delta),\\
    1-\frac{t-\tau+\delta}{\delta}, &t\in[\tau-\delta,\tau],\\
    0, &t\in (\tau,t_0+S_2-h).
\end{cases}
$$
Note that, by Proposition \ref{prop Lip_trunc} \ref{p2-1}, $v_h^\Lambda \eta^s \zeta \zeta_\delta (\cdot,t)\in W_0^{1,\infty}(B_{R_2}(x_0),\mr^N)$ for every $t\in \ell_{S_2-h}(t_0)$. Using this function as the test function of the Steklov averaged weak formulation in \eqref{eq: main equation}, we obtain
$$
\begin{aligned}
    \mathrm{I}+\mathrm{II}&=\iint_{U_{R_2,S_2}(z_0)} \partial_t[u-u_0]_h \cdot v_h^\Lambda \eta^s \zeta \zeta_\delta \, dz\\
    &\qquad +\iint_{U_{R_2,S_2}(z_0)} [\mathcal{A}(z,\nabla u)]_h \cdot \nabla (v_h^\Lambda \eta^s \zeta \zeta_\delta)\, dz\\
    &=\iint_{U_{R_2,S_2}(z_0)} [\mathcal{B}(z,F)]_h \cdot \nabla(v_h^\Lambda \eta^s \zeta \zeta_\delta)\, dz = \mathrm{III}. 
\end{aligned}
$$
Now we estimate the each term above by dividing the integral domain into $E(\Lambda)$ and $E(\Lambda)^c$. 

\noindent {\bf Estimate of $\mathrm{I}$.} By integration by parts and the product rule, we obtain
$$
\begin{aligned}
    \mathrm{I}&=\iint_{U_{R_2,S_2}(z_0)} (-v_h\cdot v_h^\Lambda \eta^{s-1}\partial_t \zeta_\delta -v_h\cdot \partial_t v_h^\Lambda \eta^{s-1}\zeta_\delta) \, dz\\
    &\qquad +\iint_{U_{R_2,S_2}(z_0)} -[u-u_0]_h\cdot v_h^\Lambda \eta^s \zeta_\delta \partial_t \zeta\, dz = \mathrm{I}_1+\mathrm{I}_2.
\end{aligned}
$$
First, we consider $\mathrm{I}_1$. Then we have 
$$
\begin{aligned}
    \mathrm{I}_1&=\iint_{U_{R_2,S_2}(z_0)} -|v_h|^2 \eta^{s-1}\partial_t \zeta_\delta\, dz+ \iint_{U_{R_2,S_2}(z_0)} v_h\cdot(v_h-v_h^\Lambda) \eta^{s-1}\partial_t \zeta_\delta\, dz\\
    &\quad -\iint_{U_{R_2,S_2}(z_0)} (v_h-v_h^\Lambda)\cdot\partial_t v_h^\Lambda \eta^{s-1}\zeta_\delta\, dz- \iint_{U_{R_2,S_2}(z_0)} v_h^\Lambda \cdot\partial_t v_h^\Lambda \eta^{s-1}\zeta_\delta\, dz.
\end{aligned}
$$
Note that integration by parts implies 
$$
\iint_{U_{R_2,S_2}(z_0)} v_h^\Lambda \cdot\partial_t v_h^\Lambda \eta^{s-1}\zeta_\delta\, dz=-\frac{1}{2}\iint_{U_{R_2,S_2}(z_0)} |v_h^\Lambda|^2 \eta^{s-1}\partial_t\zeta_\delta\, dz.
$$
Since $v_h^\Lambda=v_h$ a.e. in $E(\Lambda)$ and $v_h=v_h^\Lambda=0$ in $U_{R_2,S_2}(z_0)^c$, we obtain
$$
\begin{aligned}
    \mathrm{I}_1&=\iint_{U_{R_2,S_2}(z_0)} -(|v_h|^2-\frac{1}{2}|v_h^\Lambda|^2) \eta^{s-1}\partial_t \zeta_\delta\, dz\\
    &\quad + \iint_{E(\Lambda)^c} v_h\cdot(v_h-v_h^\Lambda) \eta^{s-1}\partial_t \zeta_\delta\, dz -\iint_{E(\Lambda)^c} (v_h-v_h^\Lambda)\cdot\partial_t v_h^\Lambda \eta^{s-1}\zeta_\delta\, dz.
\end{aligned}
$$
Letting $h\rightarrow 0^+$, we obtain from the properties of Steklov averages, Proposition \ref{prop Lip_trunc} \ref{p3-1} and Lemma \ref{LEMMA4.9} \ref{lemma5.8 i} that 
$$
\begin{aligned}
    \lim_{h\rightarrow 0^+} \mathrm{I}_1&=\iint_{U_{R_2,S_2}(z_0)} -(|v|^2-\frac{1}{2}|v^\Lambda|^2) \eta^{s-1}\partial_t \zeta_\delta\, dz\\
    &\quad + \iint_{E(\Lambda)^c} v\cdot(v-v^\Lambda) \eta^{s-1}\partial_t \zeta_\delta\, dz -\iint_{E(\Lambda)^c} (v-v^\Lambda)\cdot\partial_t v^\Lambda \eta^{s-1}\zeta_\delta\, dz\\
    &=\mathrm{I}_{11}+\mathrm{I}_{12}+\mathrm{I}_{13}.
\end{aligned}
$$
Since the Lipschitz truncation is done only in the bad set $E(\Lambda)^c$, we get
$$
\begin{aligned}
    \mathrm{I}_{11}&=\iint_{E(\Lambda)} -\frac{1}{2}|v|^2\eta^{s-1}\partial_t\zeta_\delta \, dz-\iint_{E(\Lambda)^c} \left(|v|^2-\frac{1}{2}|v^\Lambda|^c\right)\eta^{s-1}\partial_t\zeta_\delta \, dz
\end{aligned}
$$
Since $v\in L^2(U_{R_2,S_2}(z_0),\mr^N)$, we obtain from the absolute continuity, Lemma \ref{LEMMA4.9} \ref{lemma5.8 iii} and \eqref{limit of Λ|E(Λ)^c|} that
$$
    \lim_{\Lambda\rightarrow\infty}\mathrm{I}_{11}=\iint_{U_{R_2,S_2}(z_0)} -\frac{1}{2}|v|^2\eta^{s-1}\partial_t\zeta_\delta \, dz.
$$
For the same reason, we also have $\displaystyle\lim_{\Lambda\rightarrow\infty}\mathrm{I}_{12}=0$. Finally, Lemma \ref{LEMMA4.9} \ref{lemma5.8 ii} and \eqref{limit of Λ|E(Λ)^c|} imply that $\displaystyle \lim_{\Lambda\rightarrow \infty} \mathrm{I}_{13}=0$. Thus, we have
$$
\lim_{\Lambda\rightarrow\infty}\lim_{h\rightarrow 0^+} \mathrm{I}_1 = \iint_{U_{R_2,S_2}(z_0)} -\frac{1}{2}|v|^2\eta^{s-1}\partial_t\zeta_\delta \, dz.
$$

Now, we estimate $\mathrm{I}_2$. By the properties of Steklov averages and Proposition \ref{prop Lip_trunc} \ref{p3-1} and by dividing into the good and bad sets, we get
$$
\begin{aligned}
    \lim_{h\rightarrow 0}\mathrm{I}_2 &= -\iint_{U_{R_2,S_2}(z_0)} (u-u_0)\cdot v^\Lambda \eta^s \zeta_\delta \partial_t \zeta\, dz\\
    &\geq -\iint_{U_{R_2,S_2}(z_0)\cap E(\Lambda)} |u-u_0|^2 |\partial_t \zeta|\, dz-\iint_{E(\Lambda)^c} |u-u_0||v^\Lambda| |\partial_t \zeta| \, dz.
\end{aligned}
$$
Then H\"{o}lder's inequality implies 
$$
\begin{aligned}
    &\iint_{E(\Lambda)^c} |u-u_0||v^\Lambda| |\partial_t \zeta| \, dz\\
    &\qquad \leq\left(\iint_{U_{R_2,S_2}(z_0)\cap E(\Lambda)^c} |u-u_0|^2 |\partial_t \zeta|^2 \, dz\right)^\frac{1}{2}\left(\iint_{E(\Lambda)^c} |v^\Lambda|^2 \, dz\right)^\frac{1}{2}.
\end{aligned}
$$
Since $|u| \in L^2(\Omega_T)$, the first integral vanishes as $\Lambda\rightarrow\infty$. By Lemma \ref{LEMMA4.9} \ref{lemma5.8 iii} and \eqref{limit of Λ|E(Λ)^c|}, we get
$$
\lim_{\Lambda\rightarrow\infty}\iint_{E(\Lambda)^c} |v^\Lambda|^2 \, dz\leq \lim_{\Lambda\rightarrow\infty}c\Lambda^\frac{2}{p}|E(\Lambda)^c|\leq \lim_{\Lambda\rightarrow\infty}c\Lambda|E(\Lambda)^c|=0.
$$
Thus, we get
$$
\lim_{\Lambda\rightarrow\infty}\lim_{h\rightarrow 0^+}\mathrm{I}_2\geq -\iint_{U_{R_2,S_2}(z_0)} |u-u_0|^2 |\partial_t \zeta|\, dz,
$$
and hence we conclude
$$
\lim_{\Lambda\rightarrow\infty}\lim_{h\rightarrow 0^+}\mathrm{I}\geq \iint_{U_{R_2,S_2}(z_0)} -\frac{1}{2}|v|^2\eta^{s-1}\partial_t\zeta_\delta \, dz-\iint_{U_{R_2,S_2}(z_0)} |u-u_0|^2 |\partial_t \zeta|\, dz.
$$

\noindent{\bf Estimate of $\mathrm{II}$.} As in \cite{Wontae2023a}, we obtain
{ $$
\begin{aligned}
    \lim_{h\rightarrow0^+}\mathrm{II}&=\iint_{U_{R_2,S_2}(z_0)\cap E(\Lambda)} \mathcal{A}(z,\nabla u) \cdot \nabla ((u-u_0) \eta^{s+1} \zeta^2 \zeta_\delta)\, dz\\
    &\quad +\iint_{U_{R_2,S_2}(z_0)\cap E(\Lambda)^c} \mathcal{A}(z,\nabla u) \cdot \nabla (v^\Lambda \eta^s \zeta \zeta_\delta)\, dz=\mathrm{II}_1+\mathrm{II}_2.
\end{aligned}
$$}
Since the integral within the good set does not contain $\Lambda$, letting $\Lambda\rightarrow \infty$, we see from \eqref{eq: growth condition of A} and \eqref{eq : bound of gradients of eta and zeta} that
$$
\begin{aligned}
    &\lim_{\Lambda\rightarrow\infty}\lim_{h\rightarrow0^+} \mathrm{II}_1\\
    &\quad=\iint_{U_{R_2,S_2}(z_0)} (\mathcal{A}(z,\nabla u) \cdot \nabla u) \eta^{s+1} \zeta^2 \zeta_\delta\, dz\\
    &\qquad +\iint_{U_{R_2,S_2}(z_0)} \mathcal{A}(z,\nabla u) \cdot (u-u_0) \nabla(\eta^{s+1}) \zeta^2 \zeta_\delta\, dz\\
    &\quad\geq c \iint_{U_{R_2,S_2}(z_0)} (|\nabla u|^p+a(z)|\nabla u|^q+b(z)|\nabla u|^s) \eta^{s+1} \zeta^2 \zeta_\delta\, dz\\
    &\qquad -c\iint_{U_{R_2,S_2}(z_0)} (|\nabla u|^{p-1}+a(z)|\nabla u|^{q-1}+b(z)|\nabla u|^{s-1}) \frac{|u-u_0|}{R_2-R_1} \eta^{s} \zeta^2 \zeta_\delta\, dz,
\end{aligned}
$$
where $c=c(s,\nu,L)$. Then, Young's inequality with conjugate $\left(\frac{p}{p-1},p\right)$, $\left(\frac{q}{q-1},q\right)$ and $\left(\frac{s}{s-1},s\right)$, respectively, gives
\begin{align*}
    &c\iint_{U_{R_2,S_2}(z_0)} (|\nabla u|^{p-1}+a(z)|\nabla u|^{q-1}+b(z)|\nabla u|^{s-1}) \frac{|u-u_0|}{R_2-R_1} \eta^{s} \zeta^2 \zeta_\delta\, dz\\
    &\quad \leq \frac{\nu}{2}\iint_{U_{R_2,S_2}(z_0)} (|\nabla u|^{p}+a(z)|\nabla u|^{q}+b(z)|\nabla u|^{s}) \eta^{s+1} \zeta^2 \zeta_\delta\, dz\\
    &\qquad +c\iint_{U_{R_2,S_2}(z_0)} \left(\frac{|u-u_0|^p}{(R_2-R_1)^p}+a(z)\frac{|u-u_0|^q}{(R_2-R_1)^q}+b(z)\frac{|u-u_0|^s}{(R_2-R_1)^s}\right)\, dz,
\end{align*}
where $c$ depends only on $p,q,s,\nu$ and $L$. Hence we have
$$
\begin{aligned}
    &\lim_{\Lambda\rightarrow\infty}\lim_{h\rightarrow0^+} \mathrm{II}_1\\
    &\quad\geq \frac{\nu}{2}\iint_{U_{R_2,S_2}(z_0)} (|\nabla u|^{p}+a(z)|\nabla u|^{q}+b(z)|\nabla u|^{s}) \eta^{s+1} \zeta^2 \zeta_\delta\, dz\\
    &\qquad -c\iint_{U_{R_2,S_2}(z_0)} \left(\frac{|u-u_0|^p}{(R_2-R_1)^p}+a(z)\frac{|u-u_0|^q}{(R_2-R_1)^q}+b(z)\frac{|u-u_0|^s}{(R_2-R_1)^s}\right)\, dz.
\end{aligned}
$$
Moreover, it is easy to show $\displaystyle \lim_{\Lambda\rightarrow\infty} \lim_{h\rightarrow0^+} \mathrm{II}_2=0$ by using \eqref{eq: growth condition of A}, Young's inequality, Lemma \ref{LEMMA4.9} \ref{lemma5.8 iii} and \eqref{limit of Λ|E(Λ)^c|}. For detailed calculations, refer to \cite{Wontae2023a}. Thus, we conclude
$$
\begin{aligned}
    \lim_{\Lambda\rightarrow\infty}\lim_{h\rightarrow0^+} \mathrm{II} &\geq \frac{\nu}{2}\iint_{U_{R_2,S_2}(z_0)} H(z,|\nabla u|) \eta^{s+1} \zeta^2 \zeta_\delta\, dz\\
    &\qquad -c\iint_{U_{R_2,S_2}(z_0)} H\left(\frac{|u-u_0|}{R_2-R_1}\right)\, dz.
\end{aligned}
$$

\noindent{\bf Estimate of $\mathrm{III}.$} The estimate for $\mathrm{III}$ follows a process similar to that for $\mathrm{II}$. Again, we divide into the good and bad parts to obtain
$$
\begin{aligned}
    \lim_{h\rightarrow0^+} \mathrm{III}&=\iint_{U_{R_2,S_2}(z_0)\cap E(\Lambda)} \mathcal{B}(z,F) \cdot \nabla((u-u_0) \eta^{s+1} \zeta^2 \zeta_\delta)\, dz\\
    &\quad +\iint_{U_{R_2,S_2}(z_0)\cap E(\Lambda)^c} \mathcal{B}(z,F) \cdot \nabla(v_h^\Lambda \eta^s \zeta \zeta_\delta)\, dz=\mathrm{III}_1+\mathrm{III}_2.
\end{aligned}
$$
Applying \eqref{eq: growth condition of B} and Young's inequality, we get
$$
\begin{aligned}
    \lim_{\Lambda\rightarrow \infty}\lim_{h\rightarrow0^+} \mathrm{III}_1&\leq c\iint_{U_{R_2,S_2}(z_0)} \left(H\left(z,\frac{|u-u_0|}{R_2-R_1}\right)+H(z,|F|)\right)\, dz\\
    &\quad +\frac{\nu}{4}\iint_{U_{R_2,S_2}(z_0)}H(z,|\nabla u|) \eta^{s+1} \zeta^2 \zeta_\delta)\, dz
\end{aligned}
$$
for some $c=c(p,q,s,\nu,L)$. In the same reason as $\mathrm{II}_2$, we get $\displaystyle \lim_{\Lambda\rightarrow\infty}\lim_{h\rightarrow0^+}\mathrm{III}_2=0$. 
Thus, we conclude that 
$$
\begin{aligned}
    \lim_{\Lambda\rightarrow \infty}\lim_{h\rightarrow0^+} \mathrm{III}&\leq c\iint_{U_{R_2,S_2}(z_0)} \left(H\left(z,\frac{|u-u_0|}{R_2-R_1}\right)+H(z,|F|)\right)\, dz\\
    &\quad +\frac{\nu}{4}\iint_{U_{R_2,S_2}(z_0)}H(z,|\nabla u|) \eta^{s+1} \zeta^2 \zeta_\delta)\, dz.
\end{aligned}
$$
Combining all the estimates for $\mathrm{I}$, $\mathrm{II}$ and $\mathrm{III}$, we obtain
$$
\begin{aligned}
    &\iint_{U_{R_2,S_2}(z_0)} -\frac{1}{2}|v|^2\eta^{s-1}\partial_t\zeta_\delta \, dz+\frac{\nu}{4}\iint_{U_{R_2,S_2}(z_0)}H(z,|\nabla u|) \eta^{s+1} \zeta^2 \zeta_\delta)\, dz\\
    &\quad\leq c\iint_{U_{R_2,S_2}(z_0)} \left(H\left(z,\frac{|u-u_0|}{R_2-R_1}\right)+|u-u_0|^2 |\partial_t \zeta|+H(z,|F|)\right)\, dz.
\end{aligned}
$$
Finally, we complete the proof by letting $\delta \rightarrow 0$, recalling that $\tau\in \ell_{S_1}(t_0)$ is arbitrary, and replacing $u_0$ with $(u)_{U_{R_1,S_1}(z_0)}$. \qed

${}$

{
{\bf Acknowledgments.} The authors would like to express their sincere gratitude to the anonymous referee who provided valuable comments and suggestions on the earlier version, which have greatly improved the quality and clarity of the manuscript.}

\bibliographystyle{abbrv}
\bibliography{ref}{}
\end{document}